\documentclass{article}

\usepackage[english]{babel}

\usepackage[letterpaper,top=2cm,bottom=2cm,left=3cm,right=3cm,marginparwidth=1.75cm]{geometry}

\usepackage{bm}
\usepackage{amsmath}
\usepackage{amssymb}
\usepackage{amsthm}
\usepackage{graphicx}
\usepackage{parallel}
\usepackage{ stmaryrd }
\usepackage{tikz}
\usepackage{multicol}
\usepackage{ stmaryrd }
\usepackage{ latexsym }
\usepackage[shortlabels]{enumitem}
\usepackage{hyperref}
\hypersetup{colorlinks,allcolors=blue,breaklinks=true}

\newcommand{\mf}{{\mathfrak{m}}{}}
\newcommand{\rf}{{\rho}{}}
\newcommand{\iterCond}{{$(\triangle)$}}

\newcommand{\forkindep}[1][]{
  \mathrel{
    \mathop{
      \vcenter{
        \hbox{\oalign{\noalign{\kern-.3ex}\hfil$\vert$\hfil\cr
              \noalign{\kern-.7ex}
              $\smile$\cr\noalign{\kern-.3ex}}}
      }
    }\displaylimits_{#1}
  }
}

\newcommand{\mipo}{\operatorname{MiPo}}
\newcommand{\Char}{\operatorname{Char}}

\newcommand{\NN}{\mathbb{N}}

\newcommand{\QQ}{\mathbb{Q}}
\newcommand{\RR}{\mathbb{R}}
\newcommand{\FF}{\mathbb{F}}
\newcommand{\ZZ}{\mathbb{Z}}
\newcommand{\VV}{\mathbb{V}}

\newcommand{\QQp}[1]{\mathbb{Q}[X]_{\operatorname{irr}}^{#1}}
\newcommand{\Kp}[1]{K[X]_{\operatorname{irr}}^{#1}}
\newcommand{\Cc}{{\mathcal{C}}}
\newcommand{\Ccalg}{{\mathcal{C}^{\operatorname{alg}}}}
\newcommand{\Cctrans}{{\mathcal{C}^{\operatorname{tr}}}}

\newcommand{\mm}{\mathcal{M}}

\newcommand{\bb}{\mathcal{B}}
\newcommand{\Gg}{\mathcal{G}}
\newcommand{\ii}{\mathcal{I}}
\newcommand{\jj}{\mathcal{J}}
\newcommand{\kk}{\mathcal{K}}
\newcommand{\rr}{\mathcal{R}}

\newcommand{\set}[1]{{\{#1\}}}
\newcommand{\Set}[1]{{\left\{#1\right\}}}
\newcommand{\spanA}[2]{{{\langle#1\rangle_{#2}}}}

\newcommand{\Fac}{{\operatorname{Fac}}}
\newcommand{\Ker}{{\operatorname{Ker}}}
\newcommand{\ACF}{{\operatorname{ACF}}}
\newcommand{\Image}{{\operatorname{Im}}}

\newcommand{\dcl}{{\operatorname{dcl}}}
\newcommand{\rk}{{\operatorname{rk}}}
\newcommand{\cl}{{\operatorname{cl}}}
\newcommand{\Diag}{{\operatorname{Diag}}}
\newcommand{\Id}{{\operatorname{Id}}}
\newcommand{\Th}{{\operatorname{Th}}}
\newcommand{\lcm}{{\operatorname{lcm}}}
\newcommand{\Lex}{{\operatorname{Lex}}}
\newcommand{\acl}{{\operatorname{acl}}}

\newcommand{\ud}{{\underline{d}}{}}

\newcommand{\uu}{{\underline{u}}{}}
\newcommand{\uv}{{\underline{v}}{}}
\newcommand{\uw}{{\underline{w}}{}}
\newcommand{\ux}{{\underline{x}}{}}
\newcommand{\uy}{{\underline{y}}{}}

\newcommand{\uzero}{{\underline{0}}}
\newcommand{\ulambda}{{\underline{\lambda}}}

\newcommand{\li}{{\scalebox{0.5}{{$\operatorname{li}$}}}}
\newcommand{\ld}{{\scalebox{0.5}{{$\operatorname{ld}$}}}}

\newcommand{\lii}{{\scalebox{0.5}{$\operatorname{li}$}}}
\newcommand{\ldd}{{\scalebox{0.5}{$\operatorname{ld}$}}}

\newcommand{\uvvec}{{\underline{\Vec{v}}}}

\newcommand{\uxvec}{{\underline{\Vec{x}}}}

\newcommand{\xvec}{{\Vec{x}}}

\newcommand{\vvec}{{\Vec{v}}}

\newcommand{\Hfour}{{$(\operatorname{H4})$}}
\newcommand{\aclCond}{{$(V)$}}

\newcommand{\TKvs}{{T_{K\operatorname{-vs}}}}
\newcommand{\TKvsThe}{{T_{K\operatorname{-vs},\theta}}}
\newcommand{\TKvsTheC}{{T^C_{K\operatorname{-vs},\theta}}}

\newcommand{\RCF}{{\operatorname{RCF}}}

\newcommand{\tp}{{\operatorname{tp}}}

\newcommand{\LK}{{L_K}}
\newcommand{\NIP}{$\operatorname{NIP}$}

\newcommand{\NATP}{$\operatorname{NATP}$}

\newcommand{\LKThe}{{L_{K,\theta}}}
\newcommand{\LRC}{{L_{R_C}}}

\newcommand{\Lr}{L_{\operatorname{r}}}

\newcommand{\standartConstruction}{\hyperref[lemma_standart_construction]{Standard Construction}}

\newcommand{\placeholderNotation}{\hyperref[def_placeholder_notation]{Placeholder-Notation}}

\newcommand{\drawTextHelper}[5]{
\node[anchor=center, scale = 1] at (#1 * #4 + 0.5 * #4, -#2 * #5 - 0.5 * #5) {$#3$};
}
\newcommand{\drawText}[3]{\drawTextHelper{#1}{#2}{#3}{1.0}{0.6}}

\newcommand{\drawBorderHelper}[6]{\draw [black, line width=0.75]
(#1 * #5 + 0.105, -#2 * #6) --
(#1 * #5, -#2 * #6) --
(#1 * #5, -#4 * #6) --
(#1 * #5 + 0.105, -#4 * #6)
(#3 * #5 - 0.105, -#2 * #6) --
(#3 * #5, -#2 * #6) --
(#3 * #5, -#4 * #6) --
(#3 * #5 - 0.105, -#4 * #6);
\node[anchor=east, scale = 1.0] (Frame) at (0.15, -0.5 * #2 * #6 + -0.5 * #4 * #6 -0.08) {};}
\newcommand{\drawBorder}[4]{\drawBorderHelper{#1}{#2}{#3}{#4}{1.0}{0.6}}

\newcommand{\drawHDotsHelper}[5]{
\draw [line width=1.0, line cap=round, dash pattern=on 0 off 9.5 * #4]
        (#1 * #4, -#2 * #5 - 0.5 * #5) --
        (#1 * #4 + #3 * #4 + 0.02 * #4, -#2 * #5 - 0.5 * #5);    
}
\newcommand{\drawHDots}[3]{\drawHDotsHelper{#1}{#2}{#3}{1.0}{0.6}}

\newcommand{\drawVDotsHelper}[5]{
\draw [line width=1.0, line cap=round, dash pattern=on 0pt off 9.5 * #5]
        (#1 * #4 + 0.5 * #4, -#2 * #5) --
        (#1 * #4 + 0.5 * #4, -#2 * #5 -#3 * #5 - 0.02 * #5);    
}
\newcommand{\drawVDots}[3]{\drawVDotsHelper{#1}{#2}{#3}{1.0}{0.6}}

\newtheorem*{notation}{Notation}
\newtheorem*{remark*}{Remark}

\newtheorem{theorem}{Theorem}[section]
\newtheorem*{theorem*}{Theorem}
\newtheorem{theoremi}{Theorem}

\newtheorem{definition}[theorem]{Definition}
\newtheorem{fact}[theorem]{Fact}
\newtheorem{remark}[theorem]{Remark}
\newtheorem{lemma}[theorem]{Lemma}
\newtheorem{corollary}[theorem]{Corollary}
\newtheorem{example}[theorem]{Example}
\newtheorem{observation}[theorem]{Observation}
\newtheorem{question}[theorem]{Question}

\newtheorem{subdefinition}{Definition}[theorem]
\newtheorem{subclaim}{Claim}[theorem]

\newtheorem*{subdefinition*}{Definition}
\newtheorem*{subclaim*}{Claim}

\newenvironment{innerproof}[1][Proof]
{%
  \begin{proof}[#1]%
}
{%
  \end{proof}%
}

\usepackage{titlesec}
\titleformat{\section}
  {\normalfont\Large\bfseries\boldmath}{\thesection}{1em}{}
\titleformat{\subsection}
  {\normalfont\large\bfseries\boldmath}{\thesubsection}{1em}{}
\titleformat{\subsubsection}
  {\normalfont\normalsize\bfseries\boldmath}{\thesubsubsection}{1em}{}

\title{Model theory of generic vector space endomorphisms III: Reducts}
\author{Leon Chini}

\newcommand{\Addresses}{{
  \bigskip
  \footnotesize

  \textsc{Mathematisches Institut, Universität Bonn, Endenicher Allee 60, D-53115 Bonn, Germany}\par\nopagebreak
  \textit{E-mail address}: 
  \href{mailto:lchini@uni-bonn.de}{\tt lchini@uni-bonn.de}
}}

\begin{document}
\maketitle

\begin{abstract}
\noindent This paper further studies the model companion of an endomorphism acting on a vector space, possibly with extra structure. Let $T$ be a model-complete theory that $\varnothing$-defines an infinite $K$-vector space $\mathbb{V}$. In previous work, we introduced a family $\{T^C_\theta : C \in \mathcal{C}\}$ of extensions of the theory $T_\theta := T \cup \{\text{``$\theta$ is an endomorphism of $\mathbb{V}$''}\}$ that parameterizes all consistent extensions of the form  
$$  
    T_\theta \cup \left\{\sum\nolimits_{k}\bigcap\nolimits_{l}\operatorname{Ker}(\rho_{j, k, l}[\theta]) = \sum\nolimits_{k}\bigcap\nolimits_{l} \operatorname{Ker}(\eta_{j, k, l}[\theta]) : j \in \mathcal{J}\right\},  
$$  
where all sums and intersections are finite, all the $\rho[\theta]$'s and $\eta[\theta]$'s are polynomials over $K$ with $\theta$ plugged in, and $\mathcal{J}$ is some possibly infinite index set. We also presented a sufficient condition that implies that every $T^C_\theta$ has a model companion $T\theta^C$. We simplify our axiomatization of $T\theta^C$ and the criterion for its existence for theories ``close to the theory of $K$-vector spaces''.  
We apply this to the explicit case where $T$ is the pure theory of $K$-vector spaces and characterize all $\varnothing$-definable endomorphisms of $\mathbb{V}$ in this case. Given an existentially closed model $(\mathcal{M}, \theta) \models T^C_\theta$ and a polynomial $\rho\in K[X]$, we show that $(\mathcal{M},\operatorname{Ker}(\rho[\theta]))$ is, unless $\operatorname{Ker}(\rho[\theta]) = \{0\}$ or $\operatorname{Ker}(\rho[\theta]) = \mathbb{V}$, an existentially closed model of $T_V := T \cup \{\text{``$V$ is a vector subspace of $\mathbb{V}$''}\}$. In the same vein, we present a criterion for when $(\mathcal{M}, \rho[\theta])$ is again an existentially closed model of $T^{C'}_\theta$ for some $C' \in \mathcal{C}$.
\end{abstract}

\tableofcontents
\section{Introduction}
This paper is a continuation of \cite{Chi25} and \cite{Chi25b}, which deal with the model companion of an endomorphism acting on a vector space, possibly with extra structure. For the relevance of this line of inquiry and a description of earlier work, see the introduction of \cite{Chi25}. The goal of this paper is to study certain reducts of these model companions.
One often studies reducts of a theory, i.e., the theories obtained by omitting some parts of its language.
In this way, one can, for example, deduce that the original theory is ``wild'' in a certain sense, such as not being stable, \NIP{}, $\operatorname{NTP}_2$, etc., by showing that the reduct is ``wild''.
Conversely, one can often deduce that a reduct is ``tame'' by showing that the original theory is ``tame''.

We now discuss the results from the previous two papers. Let $L$ be a language and $T$ a model-complete $L$-theory with an infinite $\varnothing$-definable $K$-vector space $\VV$ in every model.  
Given a consistent set $C$ of constraints on an endomorphism, which encodes conditions of the form  
$$
    \sum\nolimits_{k}\bigcap\nolimits_{l}\Ker(\rho_{k, l}[\theta]) = \sum\nolimits_{k}\bigcap\nolimits_{l} \Ker(\eta_{k, l}[\theta])
$$  
(where all sums and intersections are finite,  
and all the $\rho[\theta]$'s and $\eta[\theta]$'s are polynomials over $K$ with the endomorphism $\theta$ plugged in), we define the following theory in the language $L_\theta := L \cup \set{\theta}$:  
$$
T^C_\theta := T \cup \set{\text{``$\theta$ is an endomorphism of $\VV$''}} \cup \set{\text{``$\theta$ satisfies the constraints in $C$''}}.
$$
Note that these sets of constraints are a simplification for the sake of this introduction and that we will actually use so-called kernel configurations instead (see Definition \ref{def_kernel_conf}).
In \cite{Chi25}, we showed that $T^C_\theta$ has a model companion $T\theta^C$ if $T$ satisfies a certain condition \Hfour{}, which corresponds to Definition 1.10 in \cite{dEl21b} and basically states that ``$\psi(\ux; \uy)$ implies no finite disjunction of non-trivial linear dependencies in $\ux$ over $\VV$'' can be expressed as an $L$-formula $\sigma_\psi(\uy)$ for every $L$-formula $\psi(\ux; \uy)$. These results, as well as all others relevant to this paper, will be recalled in Section \ref{sec_prelim_res}. In \cite{Chi25b}, we studied definable sets in $T\theta^C$, the completions of this theory, as well as the algebraic closure in its models. Only a few consequences of those results for $T =\TKvs$, the theory of $K$-vector spaces, are relevant to this paper.

“Our first new result simplifies the characterization of existentially closed models of $T^C_\theta$ for theories $T$ that are ``close to the theory of $K$-vector spaces''; see Theorem \ref{theorem_easier_char_acl_cond}. Here, by $T$ being ``close to the theory of $K$-vector spaces'', we mean that:
\begin{enumerate}[(i)]
    \item $T$ eliminates $\exists^\infty x \in \VV$, in the sense of Definition \ref{def_ex_inf_v}.
    \item Given $\mm \prec \mm' \models T$, a tuple $\uv$ of elements of $\VV'$ (the vector space of $\mm'$) is $K$-linearly independent over $\VV$ (the vector space of $\mm$) if and only if $\uv$ is $\acl_L$-independent over $M$, the universe of $\mm$.
\end{enumerate}
We then apply this simplified characterization to the concrete case in which $T$ is $\TKvs$, the theory of $K$-vector spaces; see Theorem \ref{corollary_c_comp}. When $C$ is the empty set of constraints, we obtain that a model of $\TKvsTheC$ is existentially closed if and only if the preimage of $v$ under $\rho[\theta]$ is infinite for every $v \in \VV$ and every $\rho \in K[X]$ with positive degree.

Furthermore, we investigate the endomorphisms of $\VV$ definable in $\TKvs\theta^C$. In \cite{Chi25}, we presented a ring $R_C$ of endomorphisms of $\VV$ that are definable in $T\theta^C$ using only the language of the $K$-vector space and $\theta$; see Fact \ref{theorem_r_c_def} for more details.
This ring induces an $R_C$-module structure on $\VV$ that extends its $K$-vector space structure. We also claimed that this ring has three ``optimality'' properties, but only proved two of these. We prove the last remaining ``optimality'' property in this paper:
\begin{theoremi}[Theorem \ref{theorem_homomorphism_LKthe}]
    Every endomorphism of the vector space $\VV$ that is $\varnothing$-definable in $\TKvs\theta^C$ is already an element of the ring $R_C$.
\end{theoremi}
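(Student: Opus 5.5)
We have to show that every $\varnothing$-definable endomorphism $f$ of $\VV$ in $\TKvs\theta^C$ is in $R_C$. The plan has three steps: show that $f(v)$ lies in the $\varnothing$-definable closure of $v$ inside the $R_C$-module generated by $v$, show that the resulting representation is uniform in $v$, and then recover a single element of $R_C$ that agrees with $f$ everywhere.

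\textbf{Step 1 (pointwise).} Work in a sufficiently saturated model $(\mm,\theta)\models \TKvs\theta^C$ and fix $v\in\VV$. Since $f$ is $\varnothing$-definable, $f(v)\in\dcl(v)$. By the description of algebraic closure from \cite{Chi25b} specialized to $T=\TKvs$, $\acl(v)$ is the $R_C$-submodule (equivalently, the $K$-span of the $\theta$-orbit together with the finitely many relevant preimage data encoded by $C$) generated by $v$. Hence $f(v)=r_v(v)$ for some $r_v\in R_C$, possibly after adding a component from a finite-dimensional $\varnothing$-definable subspace.

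To remove that extra component and the "preimage" ambiguity, use genericity. By the simplified characterization of existentially closed models (Theorem~\ref{theorem_easier_char_acl_cond} and Theorem~\ref{corollary_c_comp}), preimages under $\rho[\theta]$ are infinite whenever they are allowed by $C$. So any candidate of the form "a chosen preimage" moves under an automorphism fixing $v$, and such a candidate cannot be definable over $v$. Concretely, if $f(v)=w$ with $w\notin R_C v$, build an automorphism of a monster model fixing $v$ but moving $w$, using quantifier elimination / type description for $\TKvs\theta^C$ from \cite{Chi25b}.

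\textbf{Step 2 (uniformity).} Take $v$ realizing the generic type, i.e. with $\rho[\theta]v\neq0$ for all allowed $\rho$, modulo the kernel structure. Write $f(v)=r(v)$ with $r\in R_C$. The set $\{x: f(x)=r(x)\}$ is a $\varnothing$-definable subspace, since both $f$ and $r$ are additive (using $K$-linearity of definable endomorphisms of a $K$-vector space). It contains a generic element.

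\textbf{Step 3 (conclusion).} The key claim is that a $\varnothing$-definable subspace containing a generic element is all of $\VV$. If it is a proper subspace, then by the description of definable sets in \cite{Chi25b} it is contained in a finite union of kernel-configuration subspaces $\sum_k\bigcap_l\Ker(\rho_{k,l}[\theta])$. For non-generic elements one proceeds by induction on the kernel configuration: restrict $f$ to each $\Ker(\rho[\theta])$-piece, obtain $r_i\in R_C$ agreeing there, and glue them using the way $R_C$ is built from projections onto these pieces (Fact~\ref{theorem_r_c_def}).

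I expect the main obstacle to be Step 1, namely proving that $\dcl(v)\cap\VV$ is exactly $R_C v$ rather than the larger $\acl$. This requires producing enough automorphisms in the existentially closed model, via infinite preimages, to kill all non-$R_C$ candidates. That in turn requires a careful treatment of the finite-dimensional kernels forced by $C$, where $\dcl$ and $\acl$ may genuinely differ only through elements that $R_C$ is designed to capture.
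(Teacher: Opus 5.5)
Your Step~1 is not the hard part. Example~\ref{example_kvs} already gives $\dcl_{\LKThe}=\acl_{\LKThe}=\cl_\theta$, and for $T=\TKvs$ one has $\cl_\theta(v)=R_C v$. So $f(v)=r_v(v)$ for some $r_v\in R_C$ follows at once, with no automorphism argument and no ``extra component''.

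The genuine gap is in Steps~2--3, the passage from one point to all of $\VV$. Your notion of generic (no nonzero polynomial annihilator) is too weak. The claim in Step~3, that a proper $\varnothing$-definable subspace lies in a finite union of sets $\sum_k\bigcap_l\Ker(\rho_{k,l}[\theta])$, is false. Such sets are just $\Ker(\rho[\theta])$ for a single $\rho$. The $\varnothing$-definable subspaces, however, also include kernels of the projections in $R_C$, i.e.\ images.

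Concretely, let $C$ be transcendental and $g\in\Kp{0<C<\infty}$.
\begin{enumerate}[(i)]
\item $\Image(g^C)=\Ker(\pi_{\Ker(g^C)})$ is $\varnothing$-definable. It is proper, since $\Ker(g^C)\neq\set{0}$ by Lemma~\ref{corollary_kernels_growing}.
\item In a saturated model, take $v$ with linearly independent $\theta$-orbit (possible by (iv) of $C$-completeness), and put $a:=\pi_{\Image(g^C)}(v)$. The identity $(\rho\cdot g^C)[\theta](v)=g^C[\theta](\rho[\theta](a))$ shows $\rho[\theta](a)\neq 0$ for every $\rho\neq 0$. So $a$ is generic in your sense.
\item Now run your argument with $f:=\pi_{\Image(g^C)}$ at the point $a$. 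Since $f(a)=a$, Step~1 may return $r=\Id$, and then $\set{x : f(x)=r(x)}=\Image(g^C)\neq\VV$.
\end{enumerate}
The induction/gluing over kernel pieces does not repair this. In the transcendental case finitely many proper $\Ker(\rho[\theta])$ do not cover $\VV$, and no induction parameter is specified. On each piece the same one-point-to-global problem recurs.

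What is missing is the direct-sum trick $\bigoplus_{i=1}^q(\VV,\theta)\equiv(\VV,\theta)$ (Observation~\ref{obser_ee_to_direkt_sum}), which lets one element witness finitely many inequations simultaneously. The paper uses it directly. By compactness, $f(v)\in\cl_\theta(v)$ for all $v$ yields finitely many $r_1,\dots,r_q\in R_C$ with $\forall x\,\bigvee_i f(x)=r_i(x)$. If each $r_i$ failed at some $v_i$, Lemma~\ref{lemma_stack_inequal} applied to the graph of $f$ (a $\varnothing$-definable subgroup of $\VV^2$) would give one point at which all $r_i$ fail, a contradiction.

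Your route can also be rescued, but only with the right genericity. Choose $v$ with $s(v)\neq 0$ for \emph{every} nonzero $s\in R_C$, not just every nonzero polynomial. Such $v$ exists in a saturated model by Remark~3.20 of \cite{Chi25}, Lemma~\ref{lemma_stack_inequal} with $G=\VV$, and compactness. Then Theorem~\ref{theorem_grp_fml} says $\Ker(f-r)$ is cut out by equations $s_k(x)=0$ with $s_k\in R_C$. Since $v\in\Ker(f-r)$, every $s_k$ must be $0$, hence $f=r$.
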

\noindent Given $\rho \in K[X]$, we also investigate what happens if one replaces $\theta$ with a predicate for $\Ker(\rho[\theta])$ or $\Image(\rho[\theta])$:
\begin{theoremi}[Theorem \ref{theorem_kernel_generic}, Theorem \ref{theorem_image_gen}] \label{theorem_reduct_intro_1}
    Suppose that $(\mm, \theta) \models T^C_\theta$ is existentially closed, and let $\rho \in K[X]$ be given.
    Then the structure $(\mm, \Ker(\rho[\theta]))$ is either interdefinable with $\mm$ or an existentially closed model of the $L \cup \set{V}$-theory
    $$
    T_V := T \cup \set{\text{``$V$ is a vector subspace of $\VV$"}},
    $$
    where $V$ is a predicate of the same arity as $\VV$.
    The same is true for the structure $(\mm, \Image(\rho[\theta]))$.
\end{theoremi}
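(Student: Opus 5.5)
We need to show that if $(\mm,\theta)\models T^C_\theta$ is existentially closed and $W:=\Ker(\rho[\theta])$ is neither $\{0\}$ nor $\VV$, then $(\mm,W)$ is an existentially closed model of $T_V$; the argument for $\Image(\rho[\theta])$ will run in parallel. The first step is the trivial case. If $W=\{0\}$ or $W=\VV$, then $W$ is $\varnothing$-definable in $\mm$, so $(\mm,W)$ is interdefinable with $\mm$. Otherwise the model-completeness of $T$ makes $(\mm,W)\models T_V$ automatic, and only existential closedness needs proof.

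The plan is to transfer existential formulas. Let $(\mm,W)\subseteq(\mm',W')\models T_V$, and let $\exists\ux\,\varphi(\ux,\ua)$ with $\ua\in M$ and $\varphi$ quantifier-free in $L\cup\{V\}$ hold in $(\mm',W')$. I would construct an endomorphism $\theta'$ of $\VV'$ such that:
\begin{enumerate}[(a)]
\item $\theta'$ extends $\theta$;
\item $(\mm',\theta')$, or an elementary extension of it, satisfies $T^C_\theta$;
\item $\Ker(\rho[\theta'])\cap \VV''=W'\cap\VV''$ on a small substructure $\VV''$ containing the witnesses.
\end{enumerate}
Existential closedness of $(\mm,\theta)$ then pulls the witnesses back. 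The difficulty is that the formula ``$x\in V$'' becomes ``$\rho[\theta](x)=0$'', which is quantifier-free in $L_\theta$, so the transfer works if we arrange (c) exactly on the tuple of witnesses.

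For the construction, I would pass to a sufficiently saturated elementary extension and work with $K$-linear bases. Pick witnesses $\ub$. Choose a finite set $B\subseteq\VV'$, $K$-linearly independent over $\VV$ (and even over $W'+\VV$ as needed), such that each $b_i$ lies in $\VV+\operatorname{span}B$ and $B$ is split as $B_1\subseteq W'$ and $B_2$ with $\operatorname{span}(B_2)\cap(W'+\VV)=0$. This decomposition needs a little care, since $W'\cap\VV=W$ while $W'$ can meet $\VV+\operatorname{span}B$ nontrivially. I would first handle linear combinations $v+\sum\lambda b$ lying in $W'$ with $v\in\VV$.

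Here the characterization of existentially closed models from Theorem~\ref{theorem_easier_char_acl_cond} becomes essential. In $(\mm,\theta)$ the kernel $W$ is infinite, and by genericity $\rho[\theta]$ has infinite preimages, and similarly for the constraints in $C$. So for each $b\in B_1$ we want $\theta'$ with $\rho[\theta'](b)=0$, realized by placing $b$ generically inside the $\theta'$-invariant subspace $\Ker\rho[\theta']$. For $b\in B_2$ we would instead take a $\theta'$-orbit that is free in the sense of the constraints, so that $\rho[\theta'](b)\notin \VV+\ldots$. Vectors of the form $v+b$ with $v\in\VV$ and $v+b\in W'$ are handled by solving $\rho[\theta'](b)=-\rho[\theta](v)$ using surjectivity-type properties from $C$.

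The main obstacle is ensuring that $\theta'$ respects the constraints $C$ (the kernel configuration) while $\Ker\rho[\theta']$ meets the new span exactly as $W'$ does. I would first try to reduce this to the characterization in Theorem~\ref{theorem_easier_char_acl_cond}: show directly that for any finite condition "a nontrivial $L_V$-configuration over $\VV$" one can find in $\mm$ elements that are independent over a given finite set, lie in or outside $W$ as required, and satisfy the $L$-type. This uses the $\acl$/linear independence coincidence in clause (ii) together with infinitude of $W$ and of $\VV/W$, which follow from density properties of existentially closed $(\mm,\theta)$ since $W\ne0,\VV$. For the image, I would replace kernels by images, using that $\Image\rho[\theta]$ is likewise a proper nontrivial subspace with infinite index and infinite dimension.
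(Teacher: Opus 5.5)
There is a genuine gap. Your transfer skeleton (a)--(c) is sound, but the step that carries all the content is only named as ``the main obstacle'' and never carried out. That step is building a $C$-endomorphism $\theta'$ extending $\theta$ whose $\rho$-kernel meets the span of the witnesses exactly in $W'$. The detour you propose cannot replace it, for two reasons.

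First, Theorem~\ref{theorem_easier_char_acl_cond} and the ``$\acl$/linear independence coincidence'' require \aclCond{} and elimination of $\exists^\infty x\in\VV$. Neither is assumed: the statement is for an arbitrary model-complete $T$ defining $\VV$, and e.g.\ $\RCF$ with $\VV=\rr_{>0}$ fails \aclCond{}.

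Second, infinitude of $W$ and of $\VV/W$ is far too weak even when \aclCond{} holds. By Theorem~\ref{theorem_t_v_exist_closed} you must realize every $L(M)$-formula $\psi(\ux_0\ux_1)$ that forces no linear dependencies over $\VV$ by a tuple with $\ux_0\in W$ and $\ulambda_k\cdot\ux_1+u_k\notin W$. In an ordered $\QQ$-vector space, a convex subspace of infinite dimension and codimension is not existentially closed in $T_V$. Your remarks about infinite preimages and ``solving $\rho[\theta'](b)=-\rho[\theta](v)$ by surjectivity'' are also beside the point. Once you write the span as $U=\VV\oplus D_1\oplus D_2$ with $W'\cap U=W\oplus D_1$, no such combinations arise.

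The missing idea is the choice of $\theta$-cyclic pieces, which is exactly where $W\neq\{0\},\VV$ is used. Some $f\in\Kp{0<C}$ divides $\rho$. Moreover, either $C$ is transcendental with $\rho\neq 0$, or $C$ is algebraic and $g^{C(g)}\nmid\rho$ for some $g\in\Kp{0<C}$.

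The paper encodes this as a $C$-sequence-system:
\begin{itemize}
\item in the transcendental case, $\bigwedge_l f[\theta](x_{0,l})=0$, with $\ux_1$ as the linearly independent part;
\item in the algebraic case, additionally $\bigwedge_l g[\theta](x_{1,l})=0$, where $g\in\Kp{0<C}$ with $g\nmid\rho$, or $g:=f^C$ if $f^C\nmid\rho$.
\end{itemize}
It then adds the inequations $\ulambda_k\cdot\rho[\ux_1]+\rho[\theta](u_k)\neq 0$ (with $\rho$ replaced by its remainder modulo $g$ in the algebraic case) and applies Theorem~\ref{theorem_big_characterization}.

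Your direct route can be completed along the same lines without \aclCond{}. Let each basis vector of $D_1$ generate a new block $K[X]/(f)$. Let each basis vector of $D_2$ generate a free block in the transcendental case, or a block $K[X]/(g^{C(g)})$ with $g^{C(g)}\nmid\rho$ in the algebraic case, where free orbits are impossible. Extend by the Standard Construction (Fact~\ref{lemma_standart_construction}) inside an elementary extension of $\mm'$. Then pull the witnesses back by existential closedness of $(\mm,\theta)$, using model-completeness of $T$ to keep the translated formula existential.

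Finally, images are not handled by ``replacing kernels by images''. In your approach you would also have to ensure that no element of $\VV\setminus\Image(\rho[\theta])$ acquires a new $\rho$-preimage. In any approach, non-membership must be certified. The paper does this in the transcendental case as follows:
\begin{itemize}
\item it takes a factor $f\in\Kp{0<C<\infty}$ of $\rho$, which exists since $\Image(\rho)\neq\VV$;
\item it uses the system $\bigwedge_l f^C[\theta](x_{1,l})=0$ and the substitution $\psi(\rho[\ux_0]\ux^0_1)$;
\item it adds an inequation forcing $f^{C-1}[\theta]\circ\pi_{\Ker(f^C)}(\ulambda_k\cdot\uv_1+u_k)\neq 0$, which is incompatible with membership in $\Image(f)\supseteq\Image(\rho)$.
\end{itemize}
The algebraic case reduces to kernels, since there $\Image(\rho)=\Ker(\rho')$ for some $\rho'$.
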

\noindent If $T \supseteq \TKvs$ (and the vector space $\VV$ is just the reduct in the language of $K$-vector spaces), then the model companion of $T_V$, which exists if $T$ satisfies \Hfour{}, is a special case of d'Elbée's ``generic expansion by a reduct'' \cite{dEl21b}.
In a future paper, we will show that our construction preserves the neostability property \NATP{} (not the antichain tree property), which was quite recently introduced by Ahn and Kim in \cite{AK24}.
Theorem \ref{theorem_reduct_intro_1} will allow us in another future paper to transfer this preservation result to the construction presented in \cite{Blo23}. In \cite{Blo23}, Block Gorman takes o-minimal theories $T$ in which a torsion-free divisible abelian group $D$ is definable and studies the companionability of $T_\Gg$, the theory $T$ expanded by a predicate for a dense and codense divisible subgroup of $D$. For the precise assumptions on $T$ and $D$, see (ii) of Example \ref{examples_hfour}. It is not difficult to see that the model companion of $T_\Gg$ is, if it exists, precisely the model companion of $T_V$ if $\VV$ is chosen as $D$ viewed as a $\QQ$-vector space.

Another natural question is what happens if one replaces $\theta$ with $\zeta[\theta]$ for some $\zeta \in K[X]$.
More concretely, if $(\mm, \theta) \models T\theta^C$, one can ask whether $(\mm, \zeta[\theta]) \models T\theta^{C'}$ for some potentially different set of constraints $C'$.
In Theorem \ref{theorem_iterations}, we show that there can be at most one such $C'$ and give a precise condition, in terms of $C$ and $\zeta$, under which $(\mm, \zeta[\theta])$ is an existentially closed model of $T_\theta^{C'}$.
This question was asked by d'Elbée in the setting of $\operatorname{ACFH}$, the model companion of the theory of algebraically closed fields expanded by a multiplicative endomorphism; see Question 5.16 in \cite{dEl25}.
Since $\operatorname{ACFH}$ corresponds to the empty set of constraints $C = \varnothing$, we can answer this question positively in our setting:
\begin{theoremi}[Corollary \ref{corollary_iteration_crit}] \label{theoremi_iter}
    Suppose $(\mm, \theta) \models T^C_\theta$ is existentially closed, $C = \varnothing$, and let $\zeta \in K[X]$ be given with $\deg(\zeta) \geq 1$.
    Then $(\mm, \zeta[\theta])$ is an existentially closed model of $T^C_\theta$.
\end{theoremi}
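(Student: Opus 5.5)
This follows from the general criterion, Theorem \ref{theorem_iterations}. I will check that its condition holds when $C = \varnothing$ and $\deg(\zeta) \geq 1$, and that the resulting $C'$ is again the empty configuration. I expect that theorem to say the following. For existentially closed $(\mm,\theta) \models T^C_\theta$, the structure $(\mm,\zeta[\theta])$ is an existentially closed model of $T^{C'}_\theta$ exactly when two things hold. First, $C'$ is the configuration obtained by pulling back all kernel relations along $X \mapsto \zeta(X)$; that is, $\Ker(\eta[\zeta[\theta]]) = \Ker((\eta\circ\zeta)[\theta])$, so a constraint on $\zeta[\theta]$ is a constraint on $\theta$ involving only polynomials of the form $\eta\circ\zeta$. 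Second, a compatibility condition \iterCond{} holds between $C$ and $\zeta$.

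The first step is to compute $C'$ when $C=\varnothing$. Since $C$ imposes no constraints, the only kernel identities satisfied by $\theta$ in an existentially closed model are those forced by $T_\theta$ itself. These are lattice identities among the $\Ker(\rho[\theta])$: for instance, $\Ker(\rho)\cap\Ker(\eta) = \Ker(\gcd(\rho,\eta))$, sums correspond to $\lcm$, and they are governed by the factorization of polynomials. For polynomials of the form $\eta\circ\zeta$, I would verify that every such identity already holds for an arbitrary endomorphism $\zeta[\theta]$. The key algebraic fact is that $\gcd(\eta_1\circ\zeta,\eta_2\circ\zeta) = \gcd(\eta_1,\eta_2)\circ\zeta$. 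This holds because, by Bezout, $\eta_1\circ\zeta$ and $\eta_2\circ\zeta$ generate the ideal generated by $\gcd(\eta_1,\eta_2)\circ\zeta$. The same argument works for $\lcm$. Hence the pulled-back configuration is trivial, so $C' = \varnothing$, which is consistent with Theorem \ref{theorem_iterations} saying that $C'$ is unique.

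The second step is to verify the condition \iterCond{}. I expect it to say that every constraint in $C$, after being rewritten in terms of the decomposition of $\VV$ into kernels, is generated by constraints involving only polynomials in $\zeta$. Alternatively, it may say that existential closedness is preserved, meaning that the genericity condition transfers. For $C=\varnothing$ the first formulation is vacuous.

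If the criterion is instead phrased through existential closedness, I would use the characterization of existentially closed models of $T^\varnothing_\theta$ from Theorem \ref{theorem_easier_char_acl_cond}, in the special case recalled in the introduction: every $v$ has a large generic preimage under $\rho[\theta]$ for all $\rho$ of positive degree. Applied to $\zeta[\theta]$, this requires generic preimages under $\rho[\zeta[\theta]] = (\rho\circ\zeta)[\theta]$. Since $\deg(\rho\circ\zeta) = \deg\rho\cdot\deg\zeta \geq 1$, these preimages exist because $(\mm,\theta)$ is existentially closed. The main obstacle is matching this to the exact form of \iterCond{}. In particular, the degree hypothesis must be used: it excludes constant $\zeta$, for which $\zeta[\theta]$ is a scalar and the relation $\Ker(\zeta[\theta]-c)=\VV$ would be a nontrivial constraint.
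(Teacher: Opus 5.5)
You reduce to Theorem \ref{theorem_iterations}, as the paper does, but you never verify its hypotheses. Instead you guess that \iterCond{} is vacuous for the empty constraint set, and you argue $C'=\varnothing$ via $\gcd(\eta_1\circ\zeta,\eta_2\circ\zeta)=\gcd(\eta_1,\eta_2)\circ\zeta$. Neither step does the job. The theorem says that $(\mm,\zeta[\theta])$ is an existentially closed model of $T^{C'}_\theta$ if and only if two things hold: $C'=C_\zeta$ as in Definition \ref{def_C_prime_new_ii}, and $\mf_g(\zeta)\cdot C_\zeta(f)=C(g)$ for all $f\in\Kp{}$ with $C_\zeta(f)>1$ and all $g\in\Fac(f\circ\zeta)\cap\Kp{0<C}$. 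The empty constraint set is the configuration $C_\infty$, which has $C(g)=\infty$ for every $g$. Hence $\Kp{0<C}=\Kp{}$, and \iterCond{} is a genuine (if easy) condition, not a vacuous one. Your gcd identity only shows that kernel identities valid for every endomorphism stay valid for $\zeta[\theta]$, which is automatic. It does not rule out $\zeta[\theta]$ satisfying further identities, and it is not how $C'$ is pinned down. The uniqueness $C'=C_\zeta$ comes from $C$-completeness (Lemma \ref{corollary_kernels_growing}) inside the proof of Theorem \ref{theorem_iterations}.

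Your fallback argument fails for general $T$. The criterion ``every $\rho[\theta]$ of positive degree has infinite fibres'' characterizes existentially closed models only when $T=\TKvs$ (Theorem \ref{corollary_c_comp}); for that $T$ your fallback would be a valid direct proof. Even under \aclCond{}, Theorem \ref{theorem_easier_char_acl_cond} still requires realizing $L(M)$-formulas $\psi_\theta(x)\wedge S(x)$. For arbitrary $T$ your argument only shows that the reduct $(\VV,\zeta[\theta])$ is an existentially closed model of $\TKvsTheC$. That does not give existential closedness of $(\mm,\zeta[\theta])$. Transferring the condition of Theorem \ref{theorem_big_characterization} from $\theta$ to $\zeta[\theta]$ is exactly the right-to-left half of Theorem \ref{theorem_iterations}.

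What is missing is the short computation the paper does (Corollary \ref{corollary_iteration_crit}, (i)(b) and (ii) with $c_0=\infty$). For every $f\in\Kp{}$ we have $\deg(f\circ\zeta)\geq 1$ because $\deg\zeta\geq 1$, so $\Fac(f\circ\zeta)\neq\varnothing$, and each member $g$ has $C(g)=\infty$. No finite $n$ satisfies $\mf_g(\zeta)\cdot n\geq\infty$, so $C_\zeta(f)=\infty$ for all $f$. Since $C_\zeta$ is transcendental like $C$, we get $C_\zeta=C$. Condition \iterCond{} then reads $\mf_g(\zeta)\cdot\infty=\infty=C(g)$, which holds because $\mf_g(\zeta)\geq 1$.
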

\noindent Theorem \ref{theoremi_iter} above also holds for the set of constraints stating that $\rho[\theta]$ is injective for every $\rho \in K[X] \setminus \set{0}$.\\

\noindent \textbf{Acknowledgment.} The author would like to thank Christian d'Elbée for reading parts of an earlier draft of this paper and for providing valuable feedback and suggestions.
The author would also like to thank Philipp Hieronymi for general support.

\section{Preliminary Results} \label{sec_prelim_res}

In this chapter, we provide a recap of all relevant results from \cite{Chi25} and \cite{Chi25b}.

\subsection{The setting}  
Let $L$ be a first-order language, let $T$ be a model-complete $L$-theory, and let $K$ be a field.
Furthermore, assume that the theory of $K$-vector spaces is definable in $T$.
By this we mean that there are $L$-formulas $\Omega_{\VV}(\ux)$, $\Omega_{0}(\ux)$, $\Omega_{+}(\ux_1, \ux_2, \uy)$, and $\Omega_{\lambda\cdot}(\ux, \uy)$ for each $\lambda \in K$ such that, in every model $\mm \models T$, they define an infinite $K$-vector space $(\VV, 0, +, (\lambda \cdot)_{\lambda \in K})^{\mm}$.
In the case $K = \QQ$, one may also view $(\VV, 0, +)^{\mm}$ as a torsion-free divisible abelian group, since these are precisely the $\QQ$-vector spaces.
If a model $\mm \models T$ is given, then $\VV$ denotes $\VV^\mm$.
If the model is denoted with $\mm'$ instead of $\mm$, then we will write $\VV'$ instead of $\VV$.
If no model of $T$ is clear from the context, then we will still use the letter $\VV$ to denote a $K$-vector space.
We may say ``vector space'' instead of ``$K$-vector space'', ``polynomial'' instead of ``$K$-polynomial'', ``linearly independent'' instead of ``$K$-linearly independent'', and so on. Definable will always mean $\varnothing$-definable.

\begin{example}
    \label{example_main}
    Two of our main examples are as follows:
    \begin{enumerate}[(i)]
        \item Let $\LK = \set{0, +, (\lambda \cdot)_{\lambda\in K}}$ and let $\TKvs$ be the theory of $K$-vector spaces, with the obvious formulas.
        Then, for any $\mm \models \TKvs$, we choose $(\VV, 0, +, (\lambda \cdot)_{\lambda\in K}) = \mm$.
        Similarly, we can also work with ordered divisible abelian groups, since these are precisely ordered $\QQ$-vector spaces.
        \item Let $K = \QQ$, $\Lr = \set{0, 1, +, \cdot, <}$, and $T = \RCF$, the theory of real closed fields.
        Then, for any $\rr \models \RCF{}$, we choose $(\VV, 0, +, (q \cdot)_{q\in \QQ})^{\rr}$ to be $(\rr_{>0}, 1, \cdot, (x \mapsto x^q)_{q\in \QQ})$.
    \end{enumerate}
\end{example}

\noindent Notationally, we will treat $\VV$ as a unary set, or even as a separate sort.
Any $\LK$-term or formula can then be viewed as an $L$-definable function or an $L$-formula, respectively.
Note that $0$, $+$, and $(\lambda\cdot)_{\lambda\in K}$ need not belong to $L$, so they are not necessarily $L$-terms.
For a given theory, there can be multiple definable vector spaces, as in the case of $\RCF{}$.
Thus, whenever a theory $T$ is given, we actually mean the tuple $(T, \Omega_{\VV}, \Omega_{0}, \Omega_{+}, (\Omega_{\lambda\cdot})_{\lambda \in K})$.

We now define $L_\theta := L(\theta) := L \cup \set{\theta}$, where $\theta$ is a function symbol not contained in $L$.
This has the drawback that, for example, if $\rr \models \RCF$ and the positive elements are viewed as a $\QQ$-vector space, then $\theta$ must also be defined outside $\VV = \rr_{>0}$.
In this case, one might try to set $\theta(0) = 0$ and $\theta(-1) \in \set{1, -1}$ so as to extend $\theta$ to an endomorphism of $(\rr, 1, \cdot)$, but then the ambient structure is no longer a vector space.
For the sake of uniformity, we instead define $\theta(x)$ to be the neutral element of $\VV$ for all $x \not\in \VV$ and set
$$
T_\theta := T \cup \set{\text{``$\theta_{\restriction \VV}$ is a $(\VV, 0, +, (\lambda \cdot)_{\lambda\in K})$-endomorphism''}} \cup \set{\forall x \not\in \VV: \theta(x) = 0}.
$$
In particular, $\theta^n(x) = 0$ for all $x \not\in \VV$ and all $n > 0$.
For consistency, we also define $\theta^0(x) = 0$ whenever $x \not\in \VV$, and $x + y = 0$ whenever either $x \not\in \VV$ or $y \not\in \VV$.
Practically, we will ignore the behavior of $\theta$ outside of $\VV$ and simply treat $\theta$ as a function defined only on $\VV$.
For example, we set $\Ker(\theta) := \set{v \in \VV : \theta(v) = 0}$, and similarly define the kernel for any function that is an endomorphism of $\VV$.

Note that if $\VV$ is an $n$-ary set, then we actually need $n$ different $n$-ary function symbols $\theta_1, \dots, \theta_n$ rather than a single unary function symbol $\theta$.
However, as mentioned above, we will treat elements of $\VV$ as singletons in order to simplify notation and will therefore pretend that $\theta$ is unary.

\begin{definition}
    Given a polynomial $\rho \in K[X]$ and any $d \geq \deg(\rho)$, we let $\rho[\theta]$ denote the endomorphism of $\VV$ defined by
    $
    \rho[\theta](v) := \sum\nolimits_{i=0}^{d} (\rho)_i \cdot \theta^i(v)
    $
    for all $v \in \VV$. Here each $(\rho)_i$ is the respective coefficient of $X^i$ in $\rho$.
\end{definition}

\begin{notation}
    If $\theta$ is clear from the context we may write $\Ker(\rho)$ and $\Image(\rho)$ instead of $\Ker(\rho[\theta])$ and $\Image(\rho[\theta])$.
\end{notation}

\noindent It is easy to check that $\rho[\theta] + \eta[\theta] = (\rho + \eta)[\theta]$ and $\rho[\theta] \circ \eta[\theta] = (\rho \cdot \eta)[\theta]$. Many classical facts for polynomials, such as Bézout's identity or Euclidean division, can be translated to this setting. The following results follow directly from Bézout's identity:
\begin{fact}
    \label{corollary_polynomials_kernel_facts}
    Given polynomials $\rho_1, \rho_2 \in K[X]$, we obtain
    $\Ker(\rho_1) \cap \Ker(\rho_2) = \Ker(\gcd(\rho_1, \rho_2))$. If $\rho_1, \rho_2$ are relatively prime, then we obtain $\Ker(\rho_1 \cdot \rho_2) = \Ker(\rho_1) \oplus \Ker(\rho_2)$.
\end{fact}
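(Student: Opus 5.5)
The plan is to reduce both claims to Bézout's identity in $K[X]$, using that $\rho \mapsto \rho[\theta]$ is a ring homomorphism from $K[X]$ into the endomorphism ring of $\VV$. This is the remark just before the fact: $\rho[\theta]+\eta[\theta]=(\rho+\eta)[\theta]$ and $\rho[\theta]\circ\eta[\theta]=(\rho\eta)[\theta]$. In particular, all the operators $\rho[\theta]$ commute with each other, and the constant polynomial $1$ is sent to the identity on $\VV$.

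For the first claim, set $\delta := \gcd(\rho_1,\rho_2)$ and write $\rho_i = \delta\cdot\tau_i$. If $v \in \Ker(\delta)$, then $\rho_i[\theta](v) = \tau_i[\theta](\delta[\theta](v)) = 0$ for $i=1,2$, which gives the inclusion $\supseteq$. For the converse, Bézout's identity gives polynomials $\alpha_1,\alpha_2$ with $\delta = \alpha_1\rho_1+\alpha_2\rho_2$. Hence for $v\in\Ker(\rho_1)\cap\Ker(\rho_2)$ we get $\delta[\theta](v)=\alpha_1[\theta](\rho_1[\theta](v))+\alpha_2[\theta](\rho_2[\theta](v))=0$. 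The degenerate case $\rho_1=\rho_2=0$ is consistent with the convention $\gcd=0$ and $\Ker(0)=\VV$, so it needs only a remark.

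For the second claim, assume $\rho_1,\rho_2$ are relatively prime. By the first claim, $\Ker(\rho_1)\cap\Ker(\rho_2)=\Ker(1)=\{0\}$, so the sum is direct. Both kernels are contained in $\Ker(\rho_1\rho_2)$, because the polynomials commute. For the reverse inclusion, choose $\alpha_1,\alpha_2$ with $1=\alpha_1\rho_1+\alpha_2\rho_2$. Then any $v\in\Ker(\rho_1\rho_2)$ decomposes as
\[
v = (\alpha_1\rho_1)[\theta](v) + (\alpha_2\rho_2)[\theta](v).
\]
The first summand lies in $\Ker(\rho_2)$, since $\rho_2[\theta]$ applied to it equals $\alpha_1[\theta](\rho_1\rho_2)[\theta](v)=0$. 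Symmetrically, the second summand lies in $\Ker(\rho_1)$.

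There is no real obstacle here. The only point needing care is the bookkeeping convention that $\theta^0$ acts as the identity on $\VV$, so that the constant polynomial $1$ really gives $\Id_{\VV}$; since we only evaluate on elements of $\VV$, this holds.
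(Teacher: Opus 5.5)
Your proof is correct and is essentially the argument the paper has in mind: it states the fact as following directly from Bézout's identity, transported along the ring homomorphism $\rho \mapsto \rho[\theta]$. Your treatment of the degenerate $\gcd$ conventions and of $1[\theta] = \Id$ on $\VV$ is also fine.
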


\begin{fact}
    \label{corollary_polynomials_kernel_facts2}
    Given polynomials $\rho_1, \rho_2 \in K[X]$, we obtain
    $\Image(\rho_1) \cap \Image(\rho_2) = \Image(\lcm(\rho_1, \rho_2))$. If $\rho_1, \rho_2$ are relatively prime, then we obtain $\Image(\rho_1 \cdot \rho_2) = \Image(\rho_1) \cap \Image(\rho_2)$.
\end{fact}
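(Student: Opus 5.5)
We derive Fact \ref{corollary_polynomials_kernel_facts2} from Bézout's identity together with the identities $\rho[\theta] + \eta[\theta] = (\rho+\eta)[\theta]$ and $\rho[\theta]\circ\eta[\theta] = (\rho\eta)[\theta]$, mirroring the kernel version in Fact \ref{corollary_polynomials_kernel_facts}. The key observation is that all operators $\rho[\theta]$ commute with each other. Hence if $\rho \mid \sigma$, say $\sigma = \rho\tau$, then $\Image(\sigma) = \rho[\theta](\tau[\theta](\VV)) \subseteq \Image(\rho)$. Applying this with $\sigma = \lcm(\rho_1,\rho_2)$ gives $\Image(\lcm(\rho_1,\rho_2)) \subseteq \Image(\rho_1)\cap\Image(\rho_2)$ immediately.

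For the reverse inclusion, first handle degenerate cases. If $\rho_1 = 0$ or $\rho_2 = 0$, then $\lcm = 0$, and both sides are $\set{0}$. Otherwise write $g = \gcd(\rho_1,\rho_2)$ and $\rho_i = g\sigma_i$ with $\sigma_1,\sigma_2$ coprime, so that $\lcm(\rho_1,\rho_2) = g\sigma_1\sigma_2$ up to a unit. By Bézout choose $\alpha,\beta$ with $\alpha\sigma_1 + \beta\sigma_2 = 1$.

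Now take $v \in \Image(\rho_1)\cap\Image(\rho_2)$, say $v = \rho_1[\theta](x) = \rho_2[\theta](y)$. Then
$$
v = (\alpha\sigma_1+\beta\sigma_2)[\theta](v) = \alpha\sigma_1[\theta](\rho_2[\theta](y)) + \beta\sigma_2[\theta](\rho_1[\theta](x)).
$$
Since $\sigma_1\rho_2 = g\sigma_1\sigma_2 = \sigma_2\rho_1$, this equals $(g\sigma_1\sigma_2)[\theta](\alpha[\theta](y) + \beta[\theta](x))$, which lies in $\Image(\lcm(\rho_1,\rho_2))$.

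The second claim is the special case $\gcd = 1$: there $\lcm(\rho_1,\rho_2) = \rho_1\rho_2$ up to a nonzero scalar, and scalar multiples have the same image.

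The only point requiring care is the bookkeeping with the factorization through $g$, which the computation above makes explicit. Everything else is routine; in particular, commutativity of the operators follows from commutativity of $K[X]$ via the product identity.
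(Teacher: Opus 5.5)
Your proof is correct and takes essentially the approach the paper indicates: the paper records this fact as a direct consequence of Bézout's identity without writing out details, and your argument makes that explicit. You write $\rho_i = g\sigma_i$ with $\alpha\sigma_1+\beta\sigma_2 = 1$, so that $v = (g\sigma_1\sigma_2)[\theta](\alpha[\theta](y)+\beta[\theta](x))$, and you also handle the zero polynomials and scalar multiples of the $\lcm$ correctly.
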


\noindent All conventions for $\gcd$ and $\lcm$ are chosen in this paper such that the two facts above generalize to any number of polynomials, even if some of them are $0$; for details, see the paragraph above Corollary 2.5 in \cite{Chi25}.

\begin{fact}
    \label{fact_inv_poly_on_ker}
    Given polynomials $\rho_1, \rho_2 \in K[X]$ relatively prime, the map $\rho_1[\theta]_{\restriction \Ker(\rho_2)} \colon \Ker(\rho_2) \to \Ker(\rho_2)$ is an isomorphism, and there is some $\chi \in K[X]$ such that $\rho_1[\theta]_{\restriction \Ker(\rho_2)}^{-1} = \chi[\theta]_{\restriction \Ker(\rho_2)}$. In particular, $\Ker(\rho_2) \subseteq \Image(\rho_1)$ holds.
\end{fact}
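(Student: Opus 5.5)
Since $\rho_1$ and $\rho_2$ are relatively prime, Bézout's identity in $K[X]$ gives polynomials $\chi, \xi \in K[X]$ with
$$
\chi \cdot \rho_1 + \xi \cdot \rho_2 = 1.
$$
Plugging in $\theta$, and using that $\rho \mapsto \rho[\theta]$ respects sums and products, we obtain $\chi[\theta]\circ\rho_1[\theta] + \xi[\theta]\circ\rho_2[\theta] = \Id_{\VV}$. Since polynomials in $\theta$ commute, we also have $\rho_1[\theta]\circ\chi[\theta] + \rho_2[\theta]\circ\xi[\theta] = \Id_{\VV}$. I will take this $\chi$ as the candidate inverse.

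First I check that $\rho_1[\theta]$ and $\chi[\theta]$ map $\Ker(\rho_2)$ into itself. For $v \in \Ker(\rho_2)$ and any $\eta \in K[X]$, commutativity gives
$$
\rho_2[\theta](\eta[\theta](v)) = \eta[\theta](\rho_2[\theta](v)) = \eta[\theta](0) = 0.
$$
So $\eta[\theta]$ restricts to an endomorphism of $\Ker(\rho_2)$. Next, for $v \in \Ker(\rho_2)$ the second summand of the identity vanishes, so
$$
\chi[\theta](\rho_1[\theta](v)) = v \quad\text{and}\quad \rho_1[\theta](\chi[\theta](v)) = v.
$$
Hence the two restrictions are mutually inverse. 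This shows that $\rho_1[\theta]_{\restriction \Ker(\rho_2)}$ is a bijective linear map, hence an isomorphism, with inverse $\chi[\theta]_{\restriction \Ker(\rho_2)}$.

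For the final claim, let $v \in \Ker(\rho_2)$. Then $v = \rho_1[\theta](\chi[\theta](v))$, so $v \in \Image(\rho_1)$, giving $\Ker(\rho_2) \subseteq \Image(\rho_1)$.

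There is no real obstacle here. The only point requiring care is the convention for $\theta$ outside $\VV$, and we simply work inside $\VV$ as the paper stipulates. The degenerate cases ($\rho_2 = 0$, which forces $\rho_1$ to be a nonzero constant, or $\rho_2$ a nonzero constant, so that $\Ker(\rho_2)=\{0\}$) are covered by the same Bézout argument.
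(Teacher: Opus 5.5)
Your proof is correct and is the Bézout argument the paper has in mind; the paper gives no explicit proof here, recalling the fact from earlier work and noting that these kernel facts follow directly from Bézout's identity. Your check that $\Ker(\rho_2)$ is invariant under every $\eta[\theta]$ by commutativity, and your treatment of the degenerate constant or zero cases, are both sound.
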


\subsection{Kernel configurations and extensions of $T_\theta$}

\noindent As stated in the introduction, we consider a family $\set{T^C_\theta : C \in \Cc}$ of extensions of $T_\theta$.

\begin{notation}
    We let $\Kp{}$ denote the set of all monic irreducible polynomials over $K$.
\end{notation}

\noindent We start by defining our index set $\Cc$:

\begin{definition} \label{def_kernel_conf}
    We call a pair $(c, d)$ a \textbf{kernel configuration} if
    \begin{enumerate}[(i)]
        \item $c \colon \Kp{} \to \NN \cup \set{\infty}$ is a function; and
        \item $d \in \NN_{> 0} \cup \set{\infty}$ is either $\infty$ or satisfies $d = \sum_{f \in \Kp{}} \deg(f) \cdot c(f)$.
    \end{enumerate}
    We let $\Cc$ denote the set of all kernel configurations.
    Given such a kernel configuration $C = (c, d) \in \Cc$, we set $C(f) := c(f)$ for all $f \in \Kp{}$.
    We define the \textbf{degree} of $C$ by $\deg(C) := d$.
    We say that $C$ is \textbf{algebraic} if $\deg(C) < \infty$.
    In this case, we define the \textbf{minimal polynomial} of $C$ by $\mipo(C) := \prod_{f \in \Kp{}} f^{C(f)}$
    (since $\deg(C) < \infty$, only finitely many factors are different from $1$, so this product is well defined).
    We say that $C$ is \textbf{transcendental} if $\deg(C) = \infty$.
    We let $\Ccalg$ and $\Cctrans$ denote the sets of all algebraic and all transcendental kernel configurations, respectively.
\end{definition}

\noindent Note that the set of kernel configurations depends on the field $K$.
We are now ready to define the family $\set{T^C_\theta : C \in \Cc}$:

\begin{definition} \label{def_T_C_theta}
    Given $C \in \Cc$ and an endomorphism $\theta \colon \VV \to \VV$, we say that $\theta$ is a \textbf{$\mathbf{C}$-endomorphism} if one of the following holds:
    \begin{enumerate}[(i)]
        \item $C$ is algebraic and $\Ker(\mipo(C)) = \VV$, that is, $\mipo(C)[\theta] = 0$.
        \item $C$ is transcendental and $\Ker(f^{C(f)}) = \Ker(f^{C(f)+1})$ for all $f \in \Kp{}$ with $C(f) < \infty$.
    \end{enumerate}
    We define $T^C_\theta := T_\theta \cup \set{\text{``$\theta$ is a $C$-endomorphism''}}$.
\end{definition}

\noindent The family $\set{T^C_\theta : C \in \Cc}$ might seem a bit arbitrary at first; however, notice that any consistent extension of the form
$$
    T_\theta \cup \Set{\sum\nolimits_{k}\bigcap\nolimits_{l}\Ker(\rho_{j, k, l}[\theta]) = \sum\nolimits_{k}\bigcap\nolimits_{l} \Ker(\eta_{j, k, l}[\theta]) : j \in \jj},
$$
where all sums and intersections are finite,  all the $\rho_{j, k, l}$'s and $\eta_{j, k, l}$'s are polynomials over $K$, and $\jj$ is a potentially infinite index set, is equivalent to some $T^C_\theta$ (see Corollary 2.13 in \cite{Chi25} - the proof heavily uses consequences of Bézout's identity). Also note that every $T^C_\theta$ is consistent, and that $T^C_\theta \not\equiv T^{C'}_\theta$ whenever $C \neq C'$ (see Lemma 2.19 in \cite{Chi25}). So the set $\Cc$ parametrizes all consistent extensions as described above, in some sense. Some concrete examples:
\begin{enumerate}[(i)]
    \item Let $C_\infty$ be transcendental with $C_\infty(f) = \infty$ for all $f \in \Kp{}$. One can check that $T_\theta^{C_\infty}$ is $T_\theta$. 
    \item Let $C_0$ be transcendental with $C_0(f) = 0$ for all $f \in \Kp{}$. One can check that $T_\theta^{C_0}$ is $T_\theta \cup \set{\text{``$\rho[\theta]$ is injective''} : \rho \in K[X] \setminus \set{0}}$. In an existentially closed model of $T_\theta^{C_0}$, the maps $\rho[\theta]$ are isomorphisms, so one can solve systems of equations of the form $\bigwedge_{k=1}^m \sum_{l=1}^n \rho_{k, l}[\theta](x_l) = y_k$ just like in a $K(X)$-vector space.
    \item Let $C_f$ be algebraic with $\mipo(C_f) = f \in \Kp{}$. One can, similarly to (ii), solve such systems of equations as in a $K[X]/(f)$-vector space. Here, however, one has the potential advantage that the sequence $\set{\theta^i(v) : i \in \omega}$ is already determined by $\set{\theta^i(v) : 0 \leq i < \deg(f)}$.
\end{enumerate}
These are, in some cases, the ``easiest'' examples to work with, and it can often be helpful to first work with one of these kernel configurations and then turn to the general case. By contrast, the ``hardest'' kernel configurations to work with are those for which $\set{f \in \Kp{} : 0 < C(f) < \infty}$ is infinite. These notions of ``easy'' and ``hard'' mostly describe the notational complexity introduced when working with these kernel configurations, but one can also show that they correspond to neostability-theoretic complexity of the model companion of $\TKvsTheC$ (see Remark \ref{rem_neo_stab_properties}).

In \cite{Chi25}, we also showed that every $T^C_\theta$ is inductive (see Lemma 3.2 there). Hence, the model companion of each $T^C_\theta$ exists if and only if the class of existentially closed models of $T^C_\theta$ is elementary. In this case, the model companion is exactly the axiomatization.

One can easily check that if $C$ and $C'$ are algebraic kernel configurations, then $C = C'$ if and only if $\mipo(C) = \mipo(C')$. Furthermore, we have $\deg(C) = \deg(\mipo(C))$. Finally, we would like to note that the equation in (ii) of Definition \ref{def_kernel_conf} also holds for $C$-endomorphisms if $C$ is algebraic:

\begin{fact} \label{remark_alg_kc}
    Let $C$ be algebraic and let $\theta$ be a $C$-endomorphism. Then
    $\Ker(f^{C(f)}) = \Ker(f^{C(f)+1})$ holds for all $f \in \Kp{}$.
\end{fact}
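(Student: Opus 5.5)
Write $\mu := \mipo(C) = \prod_{g \in \Kp{}} g^{C(g)}$, and fix $f \in \Kp{}$. The plan is to show directly that $\Ker(f^{C(f)+1}) \subseteq \Ker(f^{C(f)})$; the reverse inclusion $\Ker(f^{C(f)}) \subseteq \Ker(f^{C(f)+1})$ is immediate, since $f^{C(f)}$ divides $f^{C(f)+1}$. The key tool will be the first part of Fact \ref{corollary_polynomials_kernel_facts}, namely $\Ker(\rho_1) \cap \Ker(\rho_2) = \Ker(\gcd(\rho_1,\rho_2))$. Because $C$ is algebraic, $C(f) < \infty$, so both exponents are ordinary natural numbers. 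If $C(f) = 0$, the convention $f^0 = 1$ gives $\Ker(f^0) = \{0\}$, and the argument below still applies.

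Since $\theta$ is a $C$-endomorphism, we have $\Ker(\mu) = \VV$. Hence
$$
\Ker(f^{C(f)+1}) = \Ker(f^{C(f)+1}) \cap \Ker(\mu) = \Ker(\gcd(f^{C(f)+1}, \mu)).
$$
Now $\mu = f^{C(f)} \cdot \prod_{g \neq f} g^{C(g)}$, and the second factor is coprime to $f$, because the $g$ are distinct monic irreducibles. So the $f$-adic valuation of $\mu$ is exactly $C(f)$. It follows that $\gcd(f^{C(f)+1}, \mu) = f^{C(f)}$, which is monic, so no normalization convention for the gcd causes trouble. This yields $\Ker(f^{C(f)+1}) = \Ker(f^{C(f)})$, which is the desired equality.

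The only point needing care is the gcd computation. It relies on unique factorization in $K[X]$, together with the fact that only finitely many $C(g)$ are nonzero, so that $\mu$ is a genuine polynomial. I do not expect any real obstacle here. As a side remark, the same argument shows $\Ker(f^{n}) = \Ker(f^{C(f)})$ for every $n \geq C(f)$.
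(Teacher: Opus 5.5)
Your argument is correct: since $\Ker(\mipo(C)) = \VV$, Fact \ref{corollary_polynomials_kernel_facts} gives $\Ker(f^{C(f)+1}) = \Ker(\gcd(f^{C(f)+1}, \mipo(C))) = \Ker(f^{C(f)})$, and this also covers the case $C(f)=0$. The paper itself states this Fact without proof, recalling it from earlier work, but your gcd computation is the direct Bézout-based argument that its preliminaries support, so your route matches the intended one.
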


\begin{fact}[Standard Construction, Lemma 2.18 in \cite{Chi25}]
    \label{lemma_standart_construction}
    Given a $C$-endomorphism $\theta \colon \VV \to \VV$ and a vector space $\VV' \supset \VV$ with $\dim(\VV'/\VV) \geq \aleph_0$, there exists a $C$-endomorphism $\theta' \colon \VV' \to \VV'$ extending $\theta$.
\end{fact}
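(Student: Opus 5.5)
The plan is to reduce to the complement. Choose a linear complement $W$ of $\VV$ in $\VV'$, so $\VV' = \VV \oplus W$ with $\kappa := \dim(W) = \dim(\VV'/\VV) \geq \aleph_0$. It then suffices to find a $C$-endomorphism $\theta_W \colon W \to W$ and to set $\theta' := \theta \oplus \theta_W$. This map extends $\theta$ by construction.

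The first step checks that a direct sum of $C$-endomorphisms is again a $C$-endomorphism. For every $\rho \in K[X]$ we have $\rho[\theta'] = \rho[\theta] \oplus \rho[\theta_W]$, because the identities $\rho[\theta]+\eta[\theta] = (\rho+\eta)[\theta]$ and $\rho[\theta]\circ\eta[\theta] = (\rho\eta)[\theta]$ respect the decomposition. It follows that $\Ker(\rho[\theta']) = \Ker(\rho[\theta]) \oplus \Ker(\rho[\theta_W])$. Both defining conditions in Definition \ref{def_T_C_theta} are equalities between kernels of polynomial expressions, namely $\Ker(\mipo(C)) = \VV$ in the algebraic case and $\Ker(f^{C(f)}) = \Ker(f^{C(f)+1})$ in the transcendental case. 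Each such equality holds on the direct sum as soon as it holds on each summand. The only point requiring care is to write out this componentwise decomposition of the kernels; I expect it to be the main, though mild, obstacle.

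The second step constructs $\theta_W$, splitting on the type of $C$.
\begin{enumerate}[(i)]
    \item Suppose $C$ is algebraic, and put $\mu := \mipo(C)$. Then $A := K[X]/(\mu)$ has $K$-dimension $\deg(C) < \infty$. Since $\kappa$ is infinite, $\dim_K A^{(\kappa)} = \kappa$, so we may identify $W$ with $A^{(\kappa)}$. Let $\theta_W$ be multiplication by $X$. Then $\mu[\theta_W]$ is multiplication by $\mu$, which is $0$, so $\theta_W$ is a $C$-endomorphism.
    \item Suppose $C$ is transcendental. Since $\dim_K K[X] = \aleph_0 \leq \kappa$, we may identify $W$ with $K[X]^{(\kappa)}$. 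Let $\theta_W$ again be multiplication by $X$. Every nonzero $\rho[\theta_W]$ is multiplication by $\rho \neq 0$ in the integral domain $K[X]$, hence injective. Therefore $\Ker(f^{n}) = \{0\}$ for all $f \in \Kp{}$ and all $n$. In particular $\Ker(f^{C(f)}) = \Ker(f^{C(f)+1})$ whenever $C(f) < \infty$.
\end{enumerate}

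Combining the two steps, $\theta'$ is a $C$-endomorphism of $\VV'$ extending $\theta$. The hypothesis $\dim(\VV'/\VV) \geq \aleph_0$ is used exactly to carry the infinite-dimensional $K[X]$-module structure in case (ii), and to absorb the dimension count in case (i).
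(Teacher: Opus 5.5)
Your proof is correct and takes essentially the route the paper relies on. Splitting $\VV' = \VV \oplus W$, letting $\theta_W$ be multiplication by $X$ on $(K[X]/(\mipo(C)))^{(\kappa)}$ or on $K[X]^{(\kappa)}$, and checking the kernel conditions summand-wise is the same as the companion-matrix/shift construction on a basis $\{v^j_\alpha\}$ of a complement, just phrased module-theoretically; the paper itself only cites Lemma 2.18 of \cite{Chi25}, but the construction of $\theta_i$ in the proof of Theorem \ref{obser_no_elim_exist_inf_no_model_companion} uses exactly these blocks.
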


\begin{fact} \label{fact_trivvivi}
    If $C$ is \textbf{trivial}, that is, if $\deg(C) = 1$, then the models of $T_\theta^C$ and $T$ are interdefinable.
    Hence, the model companion of $T_\theta^C$ is $T_\theta^C$ itself.
\end{fact}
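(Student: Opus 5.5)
If $\deg(C) = 1$, then $C$ is algebraic. Since $1 = \deg(C) = \sum_{f} \deg(f)\cdot C(f)$, there is exactly one $f \in \Kp{}$ with $C(f) = 1$, and this $f$ has degree $1$. All other $C(f')$ are $0$. So $\mipo(C) = X - \lambda$ for a unique $\lambda \in K$.

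By Definition \ref{def_T_C_theta}(i), $\theta$ is then a $C$-endomorphism exactly when $(X-\lambda)[\theta] = 0$, that is, when $\theta(v) = \lambda\cdot v$ for all $v \in \VV$. Combined with the convention that $\theta(x) = 0$ for $x \notin \VV$, this shows that $T^C_\theta$ proves
$$
\forall x\,\bigl(\theta(x) = y \leftrightarrow ((\Omega_\VV(x) \wedge \Omega_{\lambda\cdot}(x,y)) \vee (\neg\Omega_\VV(x) \wedge \Omega_0(y)))\bigr),
$$
so $\theta$ is $\varnothing$-definable by an $L$-formula. Consequently every model $\mm \models T$ has exactly one expansion to a model of $T^C_\theta$: take $\theta$ to be the map defined by this formula. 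This map is an endomorphism of $\VV$ because scalar multiplication by $\lambda$ is linear, and it satisfies the $C$-condition by construction. Conversely, the $L$-reduct of any model of $T^C_\theta$ is a model of $T$. This gives the claimed interdefinability.

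For the model companion, I would show that $T^C_\theta$ is model-complete. Let $(\mm,\theta) \subseteq (\mm',\theta')$ be models of $T^C_\theta$. Then $\mm \subseteq \mm'$ are models of $T$, so $\mm \preceq \mm'$ by model-completeness of $T$. Every $L_\theta$-formula is $T^C_\theta$-equivalent to an $L$-formula: replace each occurrence of $\theta$ using the defining formula above, and proceed by induction on terms and formulas. Hence $(\mm,\theta) \preceq (\mm',\theta')$.

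A model-complete theory is its own model companion, so $T^C_\theta$ is the model companion of itself. There is no real obstacle here. The only step needing care is the term-elimination translation, which is routine: replace each atomic formula containing $\theta(t)$ by $\exists y\,(\varphi_\theta(t,y) \wedge \dots)$, where $\varphi_\theta(x,y)$ is the defining formula displayed above.
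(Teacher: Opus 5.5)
Your proof is correct. The paper records this statement as a fact without writing out a proof, and your argument is the reasoning its ``hence'' points to: $\deg(C)=1$ forces $\mipo(C)=X-\lambda$, so $\theta$ is $L$-definable as $\lambda\cdot$ on $\VV$ and $0$ off $\VV$. $T^C_\theta$ is therefore a definitional expansion of $T$, and the model-completeness of $T$ transfers to it. One cosmetic fix: the displayed defining formula has $y$ free, so it should be quantified as $\forall x\,\forall y$.
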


\noindent We will often implicitly assume that $C$ is non-trivial.

\begin{notation}
    We introduce a few more notations for working with a kernel configuration $C \in \Cc$:
    \begin{enumerate}[(i)]
        \item Given $f \in \Kp{}$ with $C(f) < \infty$, we write $f^C$ instead of $f^{C(f)}$, $f^{C+k}$ instead of $f^{C(f) + k}$, and so on.
        \item Given a finite set $F \subseteq \Kp{}$ with $C(f) < \infty$ for all $f \in F$, we set \hbox{$F^C := \prod_{f \in F} f^C$}.
        \item We define the following subsets of $\Kp{}$:
        $$
        \Kp{C<\infty} := \set{f \in \Kp{} : C(f) < \infty}, \quad \Kp{0<C<\infty} := \set{f \in \Kp{} : 0 < C(f) < \infty},
        $$
        $$
        \Kp{C=0} := \set{f \in \Kp{} : C(f) = 0}, \quad \text{and} \quad \Kp{C=\infty} := \set{f \in \Kp{} : C(f) = \infty}.
        $$
    \end{enumerate}
\end{notation}

\noindent With the above, for any algebraic kernel configuration $C \in \Ccalg$, we have $\deg(C) = \deg(\mipo(C))$ and
    $$
    \mipo(C) = \prod\nolimits_{f \in \Kp{0<C<\infty}} f^C = (\Kp{0<C<\infty})^C.
    $$
\noindent One should note that any $\LKThe$-sentence that holds in $\TKvsTheC$ also holds in $T^C_\theta$ (here $\LKThe$ is the language of $K$-vector spaces with an endomorphism).
To be more precise, one has to modify the sentence accordingly, e.g., quantifiers of the form $\exists x$ must be replaced with $\exists x \in \VV$ and the formulas that define addition/scalar multiplication must be used instead of the function symbols in $\LKThe$.
In general, it also turns out that whenever a model $(\mm, \theta)$ of $T^C_\theta$ is existentially closed, $(\VV, \theta)$ is an existentially closed model of $\TKvsTheC$ (see Remark 3.4 in \cite{Chi25}).

\subsection{$C$-image-completeness}

\noindent Note that the condition of $\theta$ being a $C$-endomorphism does not (at least in the transcendental case) imply any kind of equations that involve the image of $\rho[\theta]$ for some $\rho \in K[X]$.
In Remark 2.21 in \cite{Chi25}, we discussed that it is very unlikely that considering expansions of $T_\theta$ that also impose equations on the images (or mixed equations with sums and intersections of both kernels and images) will lead to new model companions.
However, the following condition holds in any existentially closed model of $T^C_\theta$ and is fundamental to understanding the structure of these models:

\begin{definition} \label{def_C_image_comple}
    We say that an endomorphism $\theta \colon \VV \to \VV$ is \textbf{$\mathbf{C}$-image-complete} if it is a $C$-endomorphism and $\Image(f^{C+1}) = \Image(f^C)$  holds for all $f \in \Kp{C<\infty}$ (recall $\Image(f^C) := \Image(f^{C(f)}[\theta])$).
    We may call a model $(\mm, \theta) \models T_\theta$ \textbf{$\mathbf{C}$-image-complete} if the endomorphism $\theta$ is $C$-image-complete.
\end{definition}

\begin{fact} \label{fact_c_image_comple}
    The following statements hold:
    \begin{enumerate}[(i)]
        \item If $C$ is algebraic, then every $C$-endomorphism is $C$-image-complete (see Lemma 3.11 in \cite{Chi25}).
        \item If $C$ is any kernel configuration and $(\mm, \theta) \models T_\theta^C$ is existentially closed, then $(\mm, \theta)$, or equivalently $\theta$, is also $C$-image-complete (see Corollary 3.13 in \cite{Chi25}).
    \end{enumerate}
\end{fact}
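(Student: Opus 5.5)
For (i), fix a $C$-endomorphism $\theta$ with $C$ algebraic, and let $f \in \Kp{}$. Since $C$ is algebraic, $C(f)<\infty$ for every $f$. I will write $\mipo(C) = f^C \cdot g$ with $g := \prod_{f' \neq f} f'^{C}$, so that $g$ is relatively prime to $f$. Because $\mipo(C)[\theta]=0$, Fact \ref{corollary_polynomials_kernel_facts} gives $\VV = \Ker(f^C) \oplus \Ker(g)$.

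The map $f^C[\theta]$ kills the first summand. By Fact \ref{fact_inv_poly_on_ker}, $f[\theta]$, and hence every power of it, restricts to an automorphism of $\Ker(g)$. Therefore
$$\Image(f^C) = f^C[\theta](\Ker(g)) = \Ker(g) = f^{C+1}[\theta](\Ker(g)) = \Image(f^{C+1}).$$
This settles (i).

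For (ii), $\Image(f^{C+1}) \subseteq \Image(f^C)$ always holds, so I only need the reverse inclusion. Let $(\mm,\theta)$ be existentially closed, $f \in \Kp{C<\infty}$, and $v = f^C[\theta](w)$ with $w \in \VV$. It suffices to find an extension $(\mm',\theta') \supseteq (\mm,\theta)$ with $(\mm',\theta') \models T^C_\theta$ in which some $u$ satisfies $f^{C+1}[\theta'](u) = v$. The existential statement then transfers back to $(\mm,\theta)$.

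To find it, I view $\VV$ as a $K[X]$-module via $\theta$ and form
$$N := (\VV \oplus K[X]e)/K[X](fe - w).$$
In $N$ we have $f^{C+1} e = f^C w = v$. The map $\VV \to N$ is injective: if $x = q(fe-w)$ with $x \in \VV$, then comparing the free $e$-coordinate gives $qf = 0$, so $q = 0$. Moreover $\dim_K(N/\VV)$ is countable.

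Next I must check that $N$, with $X$ acting as $\theta_N$, is a $C$-endomorphism, i.e. $\Ker(g^{C+1}) \subseteq \Ker(g^C)$ in $N$ for every $g \in \Kp{C<\infty}$. Suppose $g^{C+1}(x + pe) \in K[X](fe-w)$ with $x \in \VV$ and $p \in K[X]$, say $g^{C+1}(x+pe) = q(fe-w)$. Comparing coordinates gives $g^{C+1}p = qf$ and $g^{C+1}x = -q[\theta](w)$. I split into two cases.
\begin{enumerate}[(a)]
    \item If $g \neq f$, then $f \mid p$, say $p = fp'$. Then $x + pe \equiv x + p'w \in \VV$ in $N$, and the claim follows from the kernel condition in $\VV$.
    \item If $g = f$, then writing $q = f^{C} q'$ and $p = p'' + (\text{terms divisible by } f)$, I reduce again to an element of $\VV$ plus a multiple of $e$, and use $\Ker(f^{C+1}) = \Ker(f^C)$ in $\VV$.
\end{enumerate}
I expect this bookkeeping, especially case (b), to be the main obstacle. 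If the direct computation gets messy, I would instead choose the relation more cleverly, for instance using $Ke \oplus \dots$ built from $f$-divisibility, so that $N/\VV$ becomes torsion-free-like away from $f$.

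Finally, I take $\mm' \succ \mm$ with $\dim(\VV'/\VV)$ infinite and large enough to contain a copy of $N$ over $\VV$ with $\dim(\VV'/N) \geq \aleph_0$. I define $\theta'$ on $N$ as above and extend it to all of $\VV'$ by the \standartConstruction{} (Fact \ref{lemma_standart_construction}). Then $(\mm',\theta') \models T^C_\theta$ extends $(\mm,\theta)$ and contains the witness $u = e$, so existential closedness yields the required solution in $\VV$.
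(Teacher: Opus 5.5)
Your part (i) is correct: the splitting $\VV=\Ker(f^C)\oplus\Ker(g)$ together with Fact \ref{fact_inv_poly_on_ker} gives $\Image(f^C)=\Ker(g)=\Image(f^{C+1})$. The paper gives no proof of either part and only cites \cite{Chi25}, so what follows assesses your argument for (ii) on its own terms.

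\textbf{Algebraic case.} For algebraic $C$, being a $C$-endomorphism means $\mipo(C)[\theta]=0$, not the kernel condition you list. Your module $N$ usually violates this: if $f\mid\mipo(C)$, then $\mipo(C)\cdot e=(\mipo(C)/f)[\theta](w)$ in $N$, which need not vanish. This is harmless, because for algebraic $C$ statement (ii) follows immediately from (i). So run the extension argument only for transcendental $C$.

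\textbf{The gap in case (b).} You left this case as bookkeeping, but the claim you need there is false for arbitrary $v,w$. It therefore cannot follow from $\Ker(f^{C+1})=\Ker(f^C)$ in $\VV$ alone.
\begin{itemize}
\item Using $fe=w$, reduce $p$ modulo $f$ and write $y=x+p_0e$ with $\deg(p_0)<\deg(f)$.
\item Since $f^{C+1}e=f^C[\theta](w)=v$, the hypothesis $f^{C+1}y=0$ reads $f^{C+1}[\theta](x)+p_0[\theta](v)=0$ in $\VV$.
\item Suppose $v\in\Image(f^{C+1})$, say $v=f^{C+1}[\theta](x_0)$. Then $y:=e-x_0$ satisfies this hypothesis.
\item If $C(f)=0$, this $y$ is nonzero. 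If $C(f)\geq 1$, then $f^Cy=f^{C-1}[\theta](w-f[\theta](x_0))$, which need not vanish.
\item Concretely, taking $w\in\Ker(f^C)\setminus\Ker(f^{C-1})$ gives $v=0$ and $e\in\Ker_N(f^{C+1})\setminus\Ker_N(f^C)$.
\end{itemize}
So in these situations $\theta_N$ is not a $C$-endomorphism.

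\textbf{The fix.} The missing idea is to use that $v$ is a bad element. Since $\Image(f^{C+1})\subseteq\Image(f^C)$, you may assume $v\notin\Image(f^{C+1})$. If $p_0\neq 0$, it is coprime to $f^{C+1}$, because $f$ is irreducible and $\deg(p_0)<\deg(f)$. Writing $1=a p_0+b f^{C+1}$ then gives
$$v=a[\theta]\big(p_0[\theta](v)\big)+b[\theta]\big(f^{C+1}[\theta](v)\big)=f^{C+1}[\theta]\big(-a[\theta](x)+b[\theta](v)\big)\in\Image(f^{C+1}),$$
a contradiction. Hence $p_0=0$, so $y=x\in\VV$, and the kernel condition in $\VV$ gives $f^Cy=0$.

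With this fix and your (correct) case (a), $\theta_N$ is a $C$-endomorphism of the $\deg(f)$-dimensional extension $N$. The rest of your argument then goes through: embed $N$ into $\VV'$ for some $\mm'\succ\mm$, apply the \standartConstruction{}, and pull back $\exists u\in\VV: f^{C+1}[\theta](u)=v$. That formula is existential modulo $T$ because $T$ is model-complete.
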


\noindent Note that the converse of (ii) in Fact \ref{fact_c_image_comple} above does not hold.
The most important consequence of $C$-image-completeness is that we can decompose $\VV$ into definable direct summands as follows.

\begin{fact}[Lemma 3.14 in \cite{Chi25}] \label{lemma_decomposition}
    If $\theta \colon \VV \to \VV$ is $C$-image-complete and $F \subseteq \Kp{0<C<\infty}$ is a finite set, then we have
    $$
    \VV = \Image(F^C) \oplus \Ker(F^C) = \Image(F^C) \oplus \bigoplus\nolimits_{f\in F} \Ker(f^C).
    $$
    If $C$ is algebraic and $F = \Kp{0<C<\infty}$, then the summand $\Image(F^C)$ is $\set{0}$ and can therefore be omitted.
\end{fact}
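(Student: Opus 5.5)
Fix a finite set $F \subseteq \Kp{0<C<\infty}$ and put $\rho := F^C = \prod_{f \in F} f^{C(f)}$. The plan is to first prove $\VV = \Image(\rho) \oplus \Ker(\rho)$, and then split $\Ker(\rho)$ further using Fact \ref{corollary_polynomials_kernel_facts}. The distinct $f \in F$ are monic irreducible, so the factors $f^C$ are pairwise coprime. Induction on $|F|$ with Fact \ref{corollary_polynomials_kernel_facts} then gives $\Ker(\rho) = \bigoplus_{f \in F} \Ker(f^C)$. This settles the second equality.

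The first step is to show $\Image(\rho) = \Image(\rho^2)$ and $\Ker(\rho) = \Ker(\rho^2)$. For the images, $C$-image-completeness gives $\Image(f^{C+1}) = \Image(f^C)$. Iterating, $\Image(f^{C+k}) = \Image(f^C)$ for all $k$: apply $f[\theta]$ to $\Image(f^{C+k-1}) = \Image(f^C)$ to get $\Image(f^{C+k}) = \Image(f^{C+1}) = \Image(f^C)$. Then Fact \ref{corollary_polynomials_kernel_facts2} for pairwise coprime polynomials gives
$$\Image(\rho^2) = \bigcap\nolimits_{f \in F} \Image(f^{2C}) = \bigcap\nolimits_{f\in F}\Image(f^C) = \Image(\rho).$$
For the kernels, note that $\theta$ is a $C$-endomorphism. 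In the transcendental case, Definition \ref{def_T_C_theta}(ii) gives $\Ker(f^C) = \Ker(f^{C+1})$; in the algebraic case, Fact \ref{remark_alg_kc} gives the same. Iterating as before yields $\Ker(f^{2C}) = \Ker(f^C)$. Fact \ref{corollary_polynomials_kernel_facts} then gives $\Ker(\rho^2) = \bigoplus_{f \in F} \Ker(f^{2C}) = \Ker(\rho)$.

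Fitting-type decomposition. Let $v \in \VV$. Since $\rho[\theta](v) \in \Image(\rho) = \Image(\rho^2)$, there is $w$ with $\rho[\theta](v) = \rho^2[\theta](w)$. Then $v - \rho[\theta](w) \in \Ker(\rho)$, so $\VV = \Image(\rho) + \Ker(\rho)$. For directness, suppose $u = \rho[\theta](w) \in \Ker(\rho)$. Then $w \in \Ker(\rho^2) = \Ker(\rho)$, so $u = 0$.

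For the algebraic case with $F = \Kp{0<C<\infty}$, we have $\rho = \mipo(C)$, and $\mipo(C)[\theta] = 0$ because $\theta$ is a $C$-endomorphism. Hence $\Image(\rho) = \set{0}$.

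The main obstacle is the bookkeeping in the step $\Image(\rho^2) = \Image(\rho)$. It relies on Fact \ref{corollary_polynomials_kernel_facts2} for coprime products, which must be applied to the powers $f^{2C}$; these are still pairwise coprime, so the Fact applies. The rest is routine.
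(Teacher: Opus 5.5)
Your proof is correct and complete. Reducing to $\Image(\rho)=\Image(\rho^2)$ and $\Ker(\rho)=\Ker(\rho^2)$ for $\rho=F^C$, and then splitting $v=\rho[\theta](w)+(v-\rho[\theta](w))$ Fitting-style, is the natural argument. This paper does not reprove the statement; it only recalls it from Lemma 3.14 of \cite{Chi25}, so there is no in-paper proof to compare against.

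Two small tidy-ups. First, the kernel iteration does not go ``as before'' by applying $f[\theta]$; it goes via preimages: $\Ker(f^{C+k})=\set{v\in\VV : f^{k-1}[\theta](v)\in\Ker(f^{C+1})}=\Ker(f^{C+k-1})$. Second, in the algebraic case you should note that $\Kp{0<C<\infty}$ is finite because $\deg(C)<\infty$, so $F^C=\mipo(C)$ is well defined.
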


\noindent In \cite{Chi25}, we defined additional endomorphisms of $\VV$ using these decompositions.

\begin{notation}
    For any $\rho \in K[X] \setminus \set{0}$, we let $\Fac(\rho) := \set{f \in \Kp{} : f \mid \rho}$ denote the set of all irreducible factors of $\rho$.
    As a convention, we set $\Fac(0) = \varnothing$.
\end{notation}

\begin{fact}[Lemma 3.15 in \cite{Chi25}] \label{fact_endo_gen}
    In the theory \hbox{$\TKvsThe \cup \set{\text{``$\theta$ is $C$-image-complete''}}$}, the following endomorphisms are definable:
    \begin{enumerate}[(i)]
        \item For $F \subseteq \Kp{0<C<\infty}$ finite, we define the \textbf{projection to the image of $\bm{F^C[\theta]}$} by
        $$
        \pi_{\Image(F^C)}(x) := \text{``the unique $u \!\in\! \Image(F^C)$ for which there is $v \in \Ker(F^C)$ with $x \!=\! u \!+\! v$''.}
        $$
        \item For $F \subseteq \Kp{0<C<\infty}$ finite, we define the \textbf{projection to the kernel of $\bm{F^C[\theta]}$} by
        $$
        \pi_{\Ker(F^C)}(x) := \text{``the unique $v \!\in\! \Ker(F^C)$ for which there is $u \in \Image(F^C)$ with $x \!=\! u \!+\! v$''.}
        $$
        We clearly have $\pi_{\Ker(F^C)} = 1[\theta] - \pi_{\Image(F^C)}$.
        \item For every monic polynomial $\eta \in K[X]$ with $\Fac(\eta) \subseteq \Kp{C<\infty}$, we define the \textbf{pseudo-inverse of $\bm{\eta[\theta]}$} by
        $$
        \eta[\theta]^{-1}(x) := \text{``the unique $u \in \Image(\Fac(\eta)^C)$ with $\eta[\theta](u) = \pi_{\Image(\Fac(\eta)^C)}(x)$''.}
        $$
        Notice that $\Fac(\eta)^C = (\Fac(\eta) \cap \Kp{0<C<\infty})^C$.
        In practice, we will also use $\eta[\theta]^{-1}$ for (non-zero) non-monic polynomials by setting $\eta[\theta]^{-1} := \lambda^{-1} \cdot (\eta/\lambda)[\theta]^{-1}$ for the leading coefficient $\lambda$ of $\eta$.
    \end{enumerate}
\end{fact}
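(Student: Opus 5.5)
The plan is to get everything from the decomposition in Fact \ref{lemma_decomposition} together with the kernel/image calculus of Facts \ref{corollary_polynomials_kernel_facts} and \ref{corollary_polynomials_kernel_facts2}. Definability is never the issue: for every $\rho \in K[X]$, $\Ker(\rho)$ is defined by $\rho[\theta](x)=0$ and $\Image(\rho)$ by $\exists y\, \rho[\theta](y)=x$, both quantifier-free or existential $\LKThe$-formulas. Each of the three maps is given by a formula of the form ``the unique $u$ such that $\ldots$''. So for each one I must show that, in every model of $\TKvsThe \cup \set{\text{``$\theta$ is $C$-image-complete''}}$, such a $u$ exists and is unique, and that the resulting function is additive and $K$-homogeneous.

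For (i) and (ii), fix a finite $F \subseteq \Kp{0<C<\infty}$. By Fact \ref{lemma_decomposition}, $\VV = \Image(F^C) \oplus \Ker(F^C)$, so every $x$ has a unique decomposition $x = u+v$ with $u \in \Image(F^C)$ and $v \in \Ker(F^C)$. Hence $\pi_{\Image(F^C)}$ and $\pi_{\Ker(F^C)}$ are the two projections of an internal direct sum of subspaces. They are therefore linear, and $\pi_{\Ker(F^C)} = 1[\theta]-\pi_{\Image(F^C)}$.

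For (iii), let $\eta$ be monic with $\Fac(\eta) \subseteq \Kp{C<\infty}$, put $G := \Fac(\eta)\cap\Kp{0<C<\infty}$ and $\rho := G^C$. The crux is the following claim.

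\emph{Claim.} $\eta[\theta]$ restricts to a bijection $W := \Image(\rho) \to W$.

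Once the claim holds, $\eta[\theta]^{-1} = (\eta[\theta]_{\restriction W})^{-1}\circ \pi_{\Image(\rho)}$ is well defined, since $\pi_{\Image(\rho)}$ is the map from (i) with $F = G$, and it is a composition of linear maps.

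Since $\eta$ is a product of irreducible factors (with multiplicity), it suffices to prove the claim for a single $f \in \Fac(\eta)$; the general case then follows by composing. Note first that $W$ is $\theta$-invariant because polynomials in $\theta$ commute. I split into two cases.

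\textbf{Case $C(f)=0$.} Being a $C$-endomorphism gives $\Ker(f)=\Ker(f^0)=\set{0}$. $C$-image-completeness gives $\Image(f)=\Image(f^0)=\VV$. Thus $f[\theta]$ is bijective on $\VV$. Moreover, $f$ is coprime to $\rho$, so Fact \ref{corollary_polynomials_kernel_facts2} yields
\[
f[\theta](W)=\Image(f\rho)=\Image(f)\cap\Image(\rho)=W .
\]

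\textbf{Case $f\in G$.} Write $\rho=f^C\sigma$ with $\gcd(f,\sigma)=1$. Using Fact \ref{corollary_polynomials_kernel_facts2} twice together with $\Image(f^{C+1})=\Image(f^C)$,
\[
f[\theta](W)=\Image(f^{C+1}\sigma)=\Image(f^{C+1})\cap\Image(\sigma)=\Image(f^{C})\cap\Image(\sigma)=W ,
\]
so $f[\theta]$ is onto $W$. For injectivity,
\[
\Ker(f)\cap W \subseteq \Ker(f^C)\cap\Image(f^C)=\set{0},
\]
where the last equality is Fact \ref{lemma_decomposition} with $F=\set{f}$.

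The main obstacle is this claim, specifically the surjectivity of $f[\theta]$ onto $\Image(\rho)$. This is exactly where image-completeness, and not merely the kernel conditions, is used, and one must also keep the gcd/lcm conventions straight when $G=\varnothing$ (then $\rho=1$ and $W=\VV$). The non-monic extension $\eta[\theta]^{-1}:=\lambda^{-1}\cdot(\eta/\lambda)[\theta]^{-1}$ is then immediate.
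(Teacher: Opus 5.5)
Your proof is correct and follows the route the paper indicates. The paper only quotes this as Lemma 3.15 of the earlier work, noting that these maps are built from the decomposition in Fact~\ref{lemma_decomposition}. Your (i)--(ii) are the two projections of $\VV=\Image(F^C)\oplus\Ker(F^C)$. For (iii) you correctly show that each irreducible factor $f$ of $\eta$ permutes $\Image(\Fac(\eta)^C)$: surjectivity comes from $\Image(f^{C+1})=\Image(f^C)$, and injectivity from $\Ker(f^C)\cap\Image(f^C)=\set{0}$, or from bijectivity of $f[\theta]$ when $C(f)=0$.

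One small detail should be made explicit. For algebraic $C$, the equality $\Ker(f)=\Ker(f^0)=\set{0}$ when $C(f)=0$ comes from Fact~\ref{remark_alg_kc}, not directly from the definition of a $C$-endomorphism.
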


\noindent Notice that whenever $\Fac(\eta) \cap \Kp{0<C<\infty} \neq \varnothing$, we obtain $\eta[\theta] \circ \eta[\theta]^{-1} = \pi_{\Image(\Fac(\eta)^C)} \neq \Id = 1[\theta]$, so the notation might be a bit misleading. Here $\Id$ is the identity on $\VV$.
It is also easy to see that $\Id = 1[\theta] = \pi_{\Image(f^C)} + \pi_{\Ker(f^C)}$. We consider the ring generated by all $\LKThe$-definable endomorphisms we have collected so far:

\begin{fact}[Theorem 3.18 in \cite{Chi25}] \label{theorem_r_c_def}
    We let $R_C$ be the set of all endomorphisms definable in the theory $\TKvsThe \cup \set{\text{``$\theta$ is $C$-image-complete''}}$ that are $\set{+, \circ}$-generated by
    $$
    \set{\rho[\theta] : \rho \in K[X]} \cup \set{\pi_{\Image(F^C)} : F \subseteq \Kp{0<C<\infty} \text{ finite}} \cup \set{\eta[\theta]^{-1} : \eta \text{ monic with }\Fac(\eta) \subseteq \Kp{C<\infty}}.
    $$
    The structure $(R_C, 0[\theta], 1[\theta], +, \circ)$ is a unitary commutative ring with $\Char(R_C) = \Char(K)$. In fact, it can also be seen as a $K$-algebra, as $K \subseteq R_C$ (identifying $\lambda \in K$ with the endomorphism $x \mapsto \lambda \cdot x$ which is $\lambda[\theta]$).
\end{fact}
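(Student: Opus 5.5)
The plan is to prove everything except commutativity directly from the definitions, and to get commutativity from a single invariance property shared by all generators.

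\textbf{Easy parts.} Each generator is $\LKThe$-definable in $\TKvsThe \cup \set{\text{``$\theta$ is $C$-image-complete''}}$, and the generated set is closed under $+$ and $\circ$ by definition. So $R_C$ is a set of definable endomorphisms of $\VV$. It contains $0[\theta]$ and $1[\theta] = \Id$, and additive inverses are given by $(-1)[\theta] \circ g$. Associativity and distributivity hold automatically for endomorphisms of an abelian group under $+$ and $\circ$. The map $\lambda \mapsto \lambda[\theta]$ is a ring homomorphism $K \to R_C$. It is injective because $\VV$ is infinite, hence nonzero. This gives $K \subseteq R_C$ and $\Char(R_C) = \Char(K)$. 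Once commutativity is established, $R_C$ is a $K$-algebra. So the real content is commutativity, and it suffices to show that any two generators commute.

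\textbf{Step 1: the pseudo-inverses are well defined.} Let $\eta$ be monic with $\Fac(\eta) \subseteq \Kp{C<\infty}$, and put $G := \Fac(\eta) \cap \Kp{0<C<\infty}$. I want to show that $\eta[\theta]$ restricts to a bijection of $\Image(G^C)$.
\begin{enumerate}[(a)]
\item Fix a factor $f$ of $\eta$. Then $f[\theta]$ maps $\Image(G^C)$ into itself, since everything commutes with $\theta$.
\item If $C(f) = 0$, then $f[\theta]$ is injective because $\Ker(f) = \Ker(f^0) = \set{0}$. It is surjective because image-completeness gives $\Image(f) = \Image(f^0) = \VV$.
\item If $f \in G$, then $\Ker(f) \cap \Image(f^C) = \set{0}$ by Fact \ref{lemma_decomposition}. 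Image-completeness gives $f[\theta](\Image(f^C)) = \Image(f^C)$.
\item Combining the factors via the decomposition $\Image(G^C) = \bigcap_{f \in G}\Image(f^C)$ (Fact \ref{corollary_polynomials_kernel_facts2}) and Fact \ref{fact_inv_poly_on_ker} for coprime factors yields the bijectivity of $\eta[\theta]$ on $\Image(G^C)$.
\end{enumerate}
Consequently $\eta[\theta]^{-1} = (\eta[\theta]_{\restriction \Image(G^C)})^{-1} \circ \pi_{\Image(G^C)}$.

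\textbf{Step 2: an invariance property $(\ast)$.} Say an endomorphism $g$ has property $(\ast)$ if $g$ commutes with every $\rho[\theta]$ and preserves both $\Image(F^C)$ and $\Ker(F^C)$ for every finite $F \subseteq \Kp{0<C<\infty}$. I will check $(\ast)$ for each type of generator.
\begin{enumerate}[(a)]
\item For $\rho[\theta]$ this is immediate, since all these spaces are $\theta$-invariant.
\item For $\pi_{\Image(F^C)}$: $\rho[\theta]$ preserves both summands of $\VV = \Image(F^C) \oplus \Ker(F^C)$, so it commutes with the projection. A projection commuting with $\theta$ preserves every $\Image(G^C)$ and $\Ker(G^C)$, because these are images and kernels of $G^C[\theta]$.
\item For $\eta[\theta]^{-1}$: it commutes with $\rho[\theta]$ because $\rho[\theta]$ commutes with $\pi$ and with the bijection $\eta[\theta]_{\restriction\Image(G^C)}$, and preserves $\Image(G^C)$. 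The preservation of the $\Image(F^C)$ and $\Ker(F^C)$ then follows exactly as in (b).
\end{enumerate}

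\textbf{Step 3: commutativity.} Take a generator $h$ and a generator $g$ with $(\ast)$.
\begin{enumerate}[(a)]
\item If $h = \rho[\theta]$, then $g$ commutes with $h$ by $(\ast)$.
\item If $h = \pi_{\Image(F^C)}$, then $g$ preserves both summands of $\VV = \Image(F^C) \oplus \Ker(F^C)$, so $g$ commutes with the projection.
\item If $h = \eta[\theta]^{-1}$, write it as in Step 1. By (b), $g$ commutes with $\pi_{\Image(G^C)}$. Also $g$ preserves $\Image(G^C)$ and commutes with $\eta[\theta]$ there, so it commutes with the inverse of that restriction.
\end{enumerate}
Hence all generators commute pairwise, and therefore $R_C$ is commutative.

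I expect the main obstacle to be Step 1. It has to handle factors with $C(f) = 0$ separately from those in $\Kp{0<C<\infty}$, and it has to justify that a product of such factors still acts bijectively on $\Image(G^C)$.
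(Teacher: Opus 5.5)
Your proof is correct. The paper does not prove this statement itself; it quotes it as a Fact from Theorem 3.18 of \cite{Chi25}, so there is no in-paper argument to compare with. Your route is a clean, self-contained verification of commutativity. Every generator commutes with all $\rho[\theta]$, hence preserves every $\Image(F^C)$ and $\Ker(F^C)$. It therefore commutes with every projection $\pi_{\Image(F^C)}$, and, via $\eta[\theta]^{-1} = (\eta[\theta]_{\restriction \Image(G^C)})^{-1}\circ\pi_{\Image(G^C)}$, with every pseudo-inverse.

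Note that $(\ast)$ is simply ``commutes with $\theta$''. As you yourself observe in Step 2(b), commuting with $\rho[\theta]$ already forces preservation of $\Ker(\rho[\theta])$ and $\Image(\rho[\theta])$.

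Step 1 could be replaced by a citation. The existence and uniqueness of $u$ in the definition of $\eta[\theta]^{-1}$ is already part of Fact \ref{fact_endo_gen}, and it yields your formula for $\eta[\theta]^{-1}$ directly. If you keep Step 1, then Step 1(d) is too compressed. For $f \in G$ you need $f[\theta]$ to map $\Image(G^C)$ onto itself, not only $\Image(f^C)$. To see this:
\begin{enumerate}[(a)]
\item Take $y \in \Image(G^C) \subseteq \Image(f^C)$ and the unique $x \in \Image(f^C)$ with $f[\theta](x) = y$.
\item For each $g \in G \setminus \set{f}$, write $x = x_1 + x_2$ along $\Image(g^C) \oplus \Ker(g^C)$.
\item Since $f[\theta]$ preserves both summands and $y \in \Image(g^C)$, you get $f[\theta](x_2) = 0$.
\item By Fact \ref{fact_inv_poly_on_ker}, $f[\theta]$ is injective on $\Ker(g^C)$, so $x_2 = 0$ and hence $x \in \Image(G^C)$.
\end{enumerate}
For the factors with $C(f) = 0$, your Step 1(b) gives automorphisms of $\VV$ commuting with $G^C[\theta]$, which therefore restrict to automorphisms of $\Image(G^C)$. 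With this filled in, the argument is complete.
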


\noindent We may sometimes write $0$ instead of $0[\theta]$ and $\Id$ or $1$ instead of $1[\theta]$. In particular, if we regard $R_C$ purely as a ring, we may write $(R_C, 0, 1, +, \cdot)$. The ring $R_C$ may again look complicated at first, but for the ``easiest to work with'' kernel configurations, we obtain the following:
\begin{enumerate}[(i)]
    \item $(R_{C_\infty}, 0, 1, +, \cdot) \simeq (K[X], 0, 1, +, \cdot)$ for the unique kernel configuration $C_\infty \in \Cctrans$, which satisfies \hbox{$C_\infty(f) = \infty$} for all $f \in \Kp{}$.
    \item $(R_{C_0}, 0, 1, +, \cdot) \simeq (K(X), 0, 1, + , \cdot)$ for the unique kernel configuration $C_0 \in \Cctrans$ with $C_0(f) = 0$ for all $f \in \Kp{}$. 
    Notice that this is a field.
    \item $(R_{C}, 0, 1, +, \cdot) \simeq (K[X]/(\mipo(C)), 0, 1, +, \cdot)$ for all $C \in \Ccalg$.
    This also implies that our ring $(R_{C}, 0, 1, +, \cdot)$ is a field for all algebraic kernel configurations $C$ with $\mipo(C)$ being irreducible.
\end{enumerate}
Other examples of $R_C$ can be found in Corollary 3.25 in \cite{Chi25}. 
There, the case where $C$ is transcendental and $\Kp{0<C<\infty}$ is infinite again turns out to be the most complicated. Multiplication rules, such as $\rho[\theta] \circ \pi_{\Image(F^C)} = \rho[\theta]$ if $F^C \mid \rho$, can be found in Lemma 3.21 in \cite{Chi25}.  

\begin{fact}[see Remark 3.26 in \cite{Chi25}] \label{fact_when_field}
    $R_C$ is a field if and only if $C = C_0$, as in (ii) above, or if $C$ is algebraic with $\mipo(C)$ irreducible. 
\end{fact}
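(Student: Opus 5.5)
The plan is to prove the two directions separately. The ``if'' direction follows from the explicit descriptions of $R_C$ listed just before this fact. The ``only if'' direction will be a case analysis on $C$: in each case other than the two good ones, I would exhibit either a zero divisor or a non-zero non-invertible element of $R_C$.

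Throughout, elements of $R_C$ are endomorphisms that are definable in $\TKvsThe \cup \set{\text{``$\theta$ is $C$-image-complete''}}$. Two such elements are equal if and only if they agree in every model. Hence, to show that an element is non-zero, or that an identity fails, it is enough to find a single $C$-image-complete model witnessing this. Such models exist in plenty: start from a $C$-endomorphism obtained via the \standartConstruction{} and pass to an existentially closed extension, which is $C$-image-complete by Fact \ref{fact_c_image_comple}(ii).

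\emph{The ``if'' direction.} If $C = C_0$, then $R_{C_0} \simeq K(X)$, which is a field. If $C$ is algebraic with $\mipo(C)$ irreducible, then $R_C \simeq K[X]/(\mipo(C))$, which is again a field.

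\emph{The ``only if'' direction.} I would split into three cases.

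\textbf{Case 1: $C$ algebraic with $\mipo(C) = gh$ reducible, where $\deg g, \deg h \geq 1$.} Using $R_C \simeq K[X]/(\mipo(C))$, the classes of $g$ and $h$ are non-zero and their product is $0$. So $g[\theta]$ and $h[\theta]$ are zero divisors, and $R_C$ is not a field.

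\textbf{Case 2: $C$ transcendental with some $f$ satisfying $0 < C(f) < \infty$.} Put $F = \set{f}$. Then $\pi_{\Image(f^C)}$ and $\pi_{\Ker(f^C)} = 1 - \pi_{\Image(f^C)}$ are idempotents, and by the decomposition of Fact \ref{lemma_decomposition} their composition is $0$. It remains to show that both are non-zero, i.e. that $\Ker(f^C) \neq \set{0}$ and $\Image(f^C) \neq \set{0}$ in some model.
\begin{itemize}
    \item For the kernel: since $C(f) > 0$ and $T^C_\theta \not\equiv T^{C'}_\theta$ for the configuration $C'$ with $C'(f) = C(f)-1$ (Lemma 2.19 in \cite{Chi25}), some model satisfies $\Ker(f^{C-1}) \subsetneq \Ker(f^C)$.
    \item For the image: since $C$ is transcendental, $\VV \neq \Ker(f^C)$ in a suitable model, for example one in which $\dim(\VV/\Ker(f^C))$ is infinite, obtained via the \standartConstruction{}.
\end{itemize}
This gives non-trivial zero divisors.

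\textbf{Case 3: $C$ transcendental with $C(g) \notin \set{0}$ for some $g$, not covered by Case 2.} Then some $f$ has $C(f) = \infty$. I claim $f[\theta]$ has no inverse in $R_C$. If $r \circ f[\theta] = 1$ held in every model, then $f[\theta]$ would be injective in every model. However, the constraints in $C$ say nothing about $\Ker(f)$. Using the \standartConstruction{} on a space containing a non-zero vector $v$ with $\theta(v)$ chosen so that $f[\theta](v) = 0$ (e.g. a companion-matrix block for $f$), followed by an existentially closed extension, gives a $C$-image-complete model with $\Ker(f) \neq \set{0}$. Also $f[\theta] \neq 0$, since $C$ is transcendental. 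Thus $f[\theta]$ is a non-zero non-unit.

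\emph{Conclusion.} If none of the three cases applies, then $C$ is transcendental with $C(f) = 0$ for all $f$, i.e. $C = C_0$.

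\emph{Main obstacle.} I expect the hardest part to be the non-degeneracy claims: producing $C$-image-complete models in which the relevant kernels and images are non-trivial, which is what makes the candidate zero divisors and non-units genuinely non-zero. In Case 3, the companion-block construction must also be checked to remain a $C$-endomorphism; this holds because it only affects the $f$-primary part, where $C$ imposes nothing.
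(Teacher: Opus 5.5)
Your argument is correct. The paper gives no proof of this statement; it only cites Remark 3.26 of \cite{Chi25}, so there is no in-text argument to compare against. Your case split is exhaustive:
\begin{itemize}
\item algebraic $C$ via $R_C \simeq K[X]/(\mipo(C))$;
\item transcendental $C$ with some $0<C(f)<\infty$ via the complementary idempotents $\pi_{\Image(f^C)}$ and $\pi_{\Ker(f^C)}$ from Fact \ref{lemma_decomposition};
\item transcendental $C$ with all $C(f)\in\set{0,\infty}$ but some $C(f)=\infty$ via the non-unit $f[\theta]$.
\end{itemize}
It uses nothing about $R_C$ beyond its generators and the two isomorphisms listed before the statement.

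Two points should be tightened.

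First, the non-degeneracy you flag as the main obstacle follows directly from Theorem \ref{corollary_c_comp}. Fix any existentially closed model of $\TKvsTheC$; such models exist by inductivity and are $C$-image-complete.
\begin{itemize}
\item Conditions (ii) and (iii) of Definition \ref{def_c_complete_new} with $u=0$ give $\Ker(f)\neq\set{0}$ whenever $C(f)>0$. This yields $\pi_{\Ker(f^C)}\neq 0$ in Case 2 and non-injectivity of $f[\theta]$ in Case 3, without Lemma 2.19 of \cite{Chi25} or a companion block.
\item Condition (iv) gives, for transcendental $C$, some $x$ with $f^C[\theta](x)\neq 0$. Hence $\Image(f^C)\neq\set{0}$ and $f[\theta]\neq 0$.
\end{itemize}

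Second, your image bullet is not justified as written. The \standartConstruction{} only asserts that \emph{some} $C$-endomorphism extension exists. For transcendental $C$, a $\theta$ with $f^C[\theta]=0$ is a $C$-endomorphism, and even $C$-image-complete, because every $g[\theta]$ with $g\neq f$ is then invertible. In such a model $\pi_{\Image(f^C)}=0$, so the witnessing model has to be chosen deliberately. You can use condition (iv) as above, or the explicit shift $\theta(v^j)=v^{j+1}$, which is a $C$-endomorphism for every transcendental $C$, followed by an existentially closed extension.
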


\noindent Also note that the elements of $R_C$ are, as defined in Fact \ref{theorem_r_c_def}, definable functions in the theory $\TKvsThe \cup \set{\text{``$\theta$ is $C$-image-complete''}}$.
By this, we mean that the elements of $R_C$ are equivalence classes of $\LKThe$-formulas modulo the theory \hbox{$\TKvsThe \cup \set{\text{``$\theta$ is $C$-image-complete''}}$} that define an endomorphism in every model of $\TKvsThe \cup \set{\text{``$\theta$ is $C$-image-complete''}}$.
So, in order to prove $r = r'$, we need to show
$$
    r^{(\VV, \theta)} = r'^{(\VV, \theta)} \quad \text{for all $(\VV, \theta) \models \TKvsThe \cup \set{\text{``$\theta$ is $C$-image-complete''}}$},
$$
and, in order to prove $r \neq r'$, we need to find $(\VV, \theta) \models \TKvsThe \cup \set{\text{``$\theta$ is $C$-image-complete''}}$ with
$
    r^{(\VV, \theta)} \neq r'^{(\VV, \theta)}.
$
In Remark 3.20 in \cite{Chi25}, we showed that $r \neq r'$ implies $r^{(\VV, \theta)} \neq r'^{(\VV, \theta)}$ if $(\VV, \theta)$ is an existentially closed model of $\TKvsTheC$.
In Theorem \ref{theorem_homomorphism_LKthe} (with $G = H = (\VV, 0, +)$), we will see that in an existentially closed model of $T_\theta^C$, any $\LKThe$-definable endomorphism of $\VV$ is, in fact, in $R_C$.

\subsection{Existentially closed models and first-order axiomatization}

We now state the characterization of the existentially closed models of $T^C_\theta$ from \cite{Chi25}. We start with the remaining ingredients:

\begin{definition}
    \label{def_param_c_sequence_system} A \textbf{parametrized $\mathbf{C}$-sequence-system} is an $\LKThe$-formula of the form
    $$
    S(\ux; \uy) = \bigwedge\nolimits_{k=1}^n f_k^{q_k}[\theta](x_{\ldd, k}) = y_k
    $$
    with $\ux := \ux_\li\ux_\ld := (x_{\lii, k} : 1 \leq k \leq m)(x_{\ldd, k} : 1 \leq k \leq n)$ and $\uy = (y_1, \dots, y_n)$ that satisfies the following conditions:
    \begin{enumerate}[(i)]
        \item If $C$ is algebraic, then $m = 0$.
        \item $f_k \in \Kp{0<C}$ and $q_k \in \set{q \in \NN : 0 < q \leq C(f_k)}$ hold for all $k \in \set{1, \dots, n}$.
    \end{enumerate}
    
\end{definition}
\noindent We will always denote parametrized $C$-sequence-systems by the letter $S$.
Given such a parametrized $C$-sequence-system $S(\ux; \uy)$, we assume that everything is as above, that is, $m$, $n$, and the $f_k$'s and $q_k$'s are defined implicitly, and we set $\ux = \ux_\li\ux_\ld$ as above.
When we partition $\ux = \ux_\li\ux_\ld$ as above, we think of:
    \begin{enumerate}[(i)]
        \item $\ux_\li$ as the linearly independent part of $\ux$, since $S(\ux; \uy)$ does not imply any linear dependencies for the sequence $(\theta^i(x_{\lii, k}) : 1 \leq k \leq m, i \in \omega)$;
        \item $\ux_\ld$ as the linearly dependent part of $\ux$, since the formula $S(\ux; \uy)$ implies that the sequence $(\theta^i(x_{\ldd, k}) : i \in \omega)$ is linearly dependent over $\spanA{y_k}{K}$ for each $k \in \set{1, \dots, n}$.
    \end{enumerate}
The names $\ux_\li$ and $\ux_\ld$ for these tuples are abbreviations for linearly independent and linearly dependent. Notice that in the algebraic case, we require $\ux_\li$ to be empty, which makes sense, as we have $\sum_{i=0}^{\deg(\mipo(C))} (\mipo(C))_i \cdot \theta^i(v) = 0$ for any $v \in \VV$ in that case.

\begin{definition}
    \label{def_c_sequence_system} \label{def_compatible} Let $S(\ux; \uy) = \bigwedge_{k=1}^n f_k^{q_k}[\theta](x_{\ldd, k}) = y_k$ be a parametrized $C$-sequence-system as in Definition \ref{def_param_c_sequence_system}, and let $(\VV, \theta) \models \TKvsThe$ be given.
\begin{enumerate}[(i)]
    \item We say that a tuple $\uu = (u_1, \dots, u_n) \in \VV$ is \textbf{compatible} with $S$ if $u_k \in \Ker(f_k^{C-q_k})$ for every $k \in \set{1, \dots, n}$ with $f_k \in \Kp{0<C<\infty}$.
    \item A \textbf{$\mathbf{C}$-sequence-system} over $(\VV, \theta)$ is an $\LKThe(\VV)$-formula of the form
    \hbox{$
    S(\ux) = S'(\ux; \uu)
    $}
    where $S'$ is a parametrized $C$-sequence-system and $\uu \in \VV$ is compatible with $S'$.
\end{enumerate}
\end{definition}

\noindent We will also denote $C$-sequence-systems over some $(\VV, \theta) \models \TKvsTheC$ by the letter $S$.
Notice that a $C$-sequence-system over $(\VV, \theta)$ is also a $C$-sequence-system over any extension $(\VV', \theta')$ that is also a model of $\TKvsThe$.

\begin{definition}[Placeholder notation] \label{def_placeholder_notation}
    Let $\ux = (x_k : k \in \kk)$ be a tuple of variables.
    We define the \textbf{placeholder sequence} \hbox{$\uxvec := (x^i_k : k \in \kk, i \in \omega)$} to be a new tuple of variables.
    We call each $x^i_k$ a \textbf{placeholder variable} or a \textbf{placeholder} for $\theta^i(x_k)$.
    We furthermore define:
    \begin{enumerate}[(i)]
        \item $\ux^i := (x_k^i : k \in \kk)$ for each $i \in \omega$, and
        \item $\xvec_k := (x^i_k : i \in \omega)$ for each $k \in \kk$.
    \end{enumerate}
    We may sometimes write $(\ux^i : i \in \omega)$ or $(\xvec_k : k \in \kk)$ instead of $\uxvec$.
    For a singleton $x$, we similarly define $\xvec := (x^i : i \in \omega)$.
    If a formula $\psi(\uxvec; \uw)$ is given, we define:
    \begin{enumerate}[(i)]
        \setcounter{enumi}{2}
        \item $\psi_\theta(\ux; \uw) := \psi((\theta^i(x_k) : k \in \kk, i \in \omega); \uw)$.
    \end{enumerate}
\end{definition}

\begin{definition} \label{def_formual_bounded}
    Let $S(\ux; \uy)= \bigwedge\nolimits_{k=1}^n f_k^{q_k}[\theta](x_{\ldd, k}) = y_k$ be a parametrized $C$-sequence-system as in Definition \ref{def_param_c_sequence_system}.
    If $\psi(\uxvec; \uw)$ is a formula, then we say $\psi(\uxvec; \uw)$ is \textbf{bounded} by $S$ if one of the following equivalent conditions holds:
        \begin{enumerate}[(i)]
            \item For all $k \in \set{1, \dots, n}$, the variable $x^i_{\ldd, k}$ does not appear in $\psi(\uxvec; \uw)$ for \hbox{$i \geq \deg(f_k^{q_k})$}.
            \item For all $k \in \set{1, \dots, n}$, the term $\theta^i(x_{\ldd, k})$ does not appear in $\psi_\theta(\ux; \uw)$ for \hbox{$i \geq \deg(f_k^{q_k})$}.
        \end{enumerate}
    We also say that $\psi_\theta(\ux; \uw)$ is \textbf{bounded} by $S$, if $\psi(\uxvec; \uw)$ is bounded by $S$. Furthermore, we say that a formula is \textbf{bounded} by a $C$-sequence-system $S$ (i.e., a parametrized $C$-sequence-system with some compatible parameters plugged in) if it is bounded by the underlying parametrized $C$-sequence-system.
\end{definition}

\noindent In practice, for a formula $\psi(\uxvec)$ to be bounded by $S$ means that no subterm of the form $\theta^i(x_{\ldd, k})$ appearing in $\psi_\theta(\ux)$ can be replaced by applying a Euclidean division with the equation $f_k^{q_k}[\theta](x_{\ldd, k}) = y_k$ from $S(\ux; \uy)$. Indeed, if $i \geq \deg(f^{q_k}_k)$, then we could replace $\theta^i(x_{\ldd, k})$ with $\chi[\theta](y_k) + r[\theta](x_{\ldd, k})$, where $\chi, r \in K[X]$ are the unique polynomials  satisfying $\chi \cdot f^{q_k}_k + r = X^i$ and $\deg(r) < \deg(f^{q_k}_k) \leq i$.

Now that we have all ingredients, we can state the characterization of existentially closed models of $T^C_\theta$:

\begin{theorem}[Theorem 3.32 in \cite{Chi25}] \label{theorem_big_characterization}
    $(\mm, \theta) \models T^C_\theta$ is existentially closed if and only if it is $C$-image-complete and 
    $$
        (\mm, \theta) \models \exists \ux \in \VV : \psi_\theta(\ux) \wedge S(\ux)
    $$
    holds for any $C$-sequence-system $S(\ux)$ over $(\VV, \theta)$ and $L(M)$-formula $\psi(\uxvec)$ that is bounded by $S$ and does not imply any finite disjunction of non-trivial linear dependencies in $\uxvec$ over $\VV$.
\end{theorem}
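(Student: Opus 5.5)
This is a characterization of existentially closed models, so I would prove the two directions separately. I would use that $T^C_\theta$ is inductive and that $T$ is model-complete throughout. The central device is a translation between $\LKThe$-conditions on $\ux$ and $L$-conditions on the placeholder sequence $\uxvec$. Boundedness by $S$ means that the only placeholders $\psi$ sees are $x^i_{\ldd,k}$ with $i<\deg(f_k^{q_k})$, together with finitely many $x^i_{\lii,k}$. The equations of $S$ then determine every higher iterate $\theta^i(x_{\ldd,k})$ from these and from $u_k$, via the Euclidean division remark after Definition \ref{def_formual_bounded}.

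\emph{($\Rightarrow$).} $C$-image-completeness is Fact \ref{fact_c_image_comple}(ii). For the formula condition, pass to an elementary extension $\mm'\succ\mm$ that is sufficiently saturated. There, because $\psi$ implies no finite disjunction of nontrivial linear dependencies over $\VV$, compactness gives a realization $\uv$ of $\psi(\uxvec)$ whose coordinates $\uvvec$ (only the finitely many occurring placeholders matter) are linearly independent over $\VV$. I would then define $\theta'$ on $\VV\oplus\spanA{\uvvec}{K}$ so that:
\begin{itemize}
\item $\theta'$ extends $\theta$;
\item $\theta'$ shifts $v^i_{\lii,k}\mapsto v^{i+1}_{\lii,k}$ along the li-chains (extending the chain by new independent elements where $\psi$ does not mention them);
\item $\theta'$ shifts $v^i_{\ldd,k}\mapsto v^{i+1}_{\ldd,k}$ for $i<\deg(f_k^{q_k})-1$;
\item the top element is sent to whatever vector makes $f_k^{q_k}[\theta'](v^0_{\ldd,k})=u_k$.
\end{itemize}
This is well defined by the independence.

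The key check is that $\theta'$ is still a $C$-endomorphism on this subspace, and this is exactly where compatibility of $\uu$ is needed. Since $u_k\in\Ker(f_k^{C-q_k})$, the element $v^0_{\ldd,k}$ lies in $\Ker(f_k^{C})$, so no kernel is enlarged beyond $f^C$. The li-part is free and has no torsion, and $C(f)=\infty$ or transcendental $C$ imposes nothing else. I would verify $\Ker(f^{C})=\Ker(f^{C+1})$ on the new subspace by a direct computation in the $K[X]$-module generated by the new vectors over $\VV$, using $C$-image-completeness of $\theta$ to handle interaction with $\VV$. Then extend $\theta'$ to all of $\VV'$ by Fact \ref{lemma_standart_construction}, after enlarging $\mm'$ if necessary so that the codimension is infinite. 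The existential formula holds in $(\mm',\theta')\supseteq(\mm,\theta)$, so it holds in $(\mm,\theta)$ by existential closedness.

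\emph{($\Leftarrow$).} Take $(\mm,\theta)\subseteq(\mm',\theta')\models T^C_\theta$ and an existential $L_\theta(M)$-formula realized by some $\ua$ in $\mm'$. By model-completeness of $T$, I would rewrite it as an $L(M)$-formula in the finitely many iterates $\theta'^i(a)$. I would then choose a $K[X]$-basis presentation of $\spanA{\theta'^i(\ua)}{}$ over $\VV$, using Fact \ref{lemma_decomposition} and the structure of finitely generated torsion modules over the PID $K[X]$. The goal is to find new variables $\ux$ in which:
\begin{itemize}
\item the configuration is described by a $C$-sequence-system $S$ with compatible parameters $\uu\in\VV$;
\item the iterates occurring are independent over $\VV$, so the $L$-type of $\ua$ yields a bounded formula $\psi$ implying no linear dependency.
\end{itemize}
The hypothesis then gives a witness in $\mm$, and it realizes the original formula.

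I expect this normal-form reduction in ($\Leftarrow$) to be the main obstacle. The torsion part of $\ua$ over $\VV$ must be put in the shape $f_k^{q_k}[\theta](x)=u_k$ with $u_k\in\VV$ compatible. $C$-image-completeness of $\theta$ and the $C$-endomorphism property of $\theta'$ are what ensure this, since they allow moving the $\Image(f^C)$-components of the parameters into $\VV$ by solving inside $\mm$. I would first handle a single $f$ and a cyclic module, then glue these together using Fact \ref{corollary_polynomials_kernel_facts}.
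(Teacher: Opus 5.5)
This statement is only quoted here from \cite{Chi25}, so there is no proof in this paper to compare against line by line. Your outline is correct and follows the natural route with the tools the paper recalls. For $(\Rightarrow)$, you take an independent realization of $\psi$ in a saturated $\mm'\succ\mm$, build a partial $C$-endomorphism on the $\theta'$-invariant space $\VV\oplus\spanA{\uvvec}{K}$ (with the li-chains made infinite and free), and extend it by Fact \ref{lemma_standart_construction}. For $(\Leftarrow)$, you take an elementary-divisor decomposition of the $K[X]$-module $(\VV+K[X]\underline{a})/\VV$ and normalize the lifted generators using Fact \ref{lemma_decomposition}.

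Three points are worth making explicit.

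\textbf{Image-completeness in $(\Rightarrow)$.} You do not need $C$-image-completeness in the kernel check there. If $f^{C+1}[\theta'](w)=0$, then the free components of $w$ and its components at $f_k\neq f$ vanish modulo $\VV$. Compatibility then forces $f^{C+1}[\theta](v)=0$ for the $\VV$-component $v$, hence $f^{C}[\theta'](w)=0$.

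\textbf{Exponent bounds in $(\Leftarrow)$.} You should state that the same normalization also yields $f_k\in\Kp{0<C}$ and $q_k\le C(f_k)$, which are required for $S$ to be a $C$-sequence-system. Suppose $f^q[\theta'](x)=u\in\VV$ with $q>C(f)$. Write the $\Image(f^C)=\Image(f^q)$-component of $u$ as $f^q[\theta](w)$. Then $f^{C+q}[\theta'](x-w)=0$, so $f^C[\theta'](x-w)=0$ because $\theta'$ is a $C$-endomorphism. This contradicts $f^q$ being the annihilator of $x$ modulo $\VV$; the case $C(f)=0$ is included. For $q\le C(f)$, the same computation gives compatibility.

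\textbf{Role of model-completeness.} In $(\Leftarrow)$, model-completeness of $T$ is needed because it gives $\mm\prec\mm'$. That is what lets an independent realization in $\mm'$ witness that $\psi$ implies no linear dependency over $\VV$. The unnesting step does not need it.
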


\noindent We give two examples where the characterization simplifies quite a lot:
\begin{enumerate}[(i)]
    \item Fix $f \in \Kp{}$ and let $C_f$ be the unique algebraic kernel configuration with $\mipo(C_f) = f$.
    A model $(\mm, \theta) \models T^{C_f}_\theta$ is existentially closed if and only if
    $$
        (\mm, \theta) \models \exists \ux \in \VV : \psi(\theta^0(\ux), \dots, \theta^{\deg(f)-1}(\ux))
    $$
    holds for every $L(M)$-formula $\psi(\ux^0, \dots, \ux^{\deg(f)-1})$ that does not imply any finite disjunction of non-trivial linear dependencies in $\ux^0, \dots, \ux^{\deg(f)-1}$ over $\VV$.
    This is Theorem 3.33 in \cite{Chi25}.
    
    \item Let $C_0$ be the unique transcendental kernel configuration with $C_0(f) = 0$ for all $f \in \Kp{}$.
    A model $(\mm, \theta) \models T^{C_0}_\theta$ is existentially closed if and only if $\rho[\theta]$ is invertible for every $\rho \in K[X] \setminus \set{0}$ and
    $$
        (\mm, \theta) \models \exists \ux \in \VV : \psi_\theta(\ux)
    $$
    holds for every $L(M)$-formula $\psi(\uxvec)$ that does not imply any finite disjunction of non-trivial linear dependencies in $\uxvec$ over $\VV$.
    This is Theorem 3.34 in \cite{Chi25}.
\end{enumerate}

\noindent The next step is to first-order axiomatize this characterization when possible.
For this, we need the following two families of formulas:

\begin{fact}[Lemma 3.36 in \cite{Chi25}]
    Given a parametrized $C$-sequence-system $S(\ux; \uy)$, there is a $\LKThe$-formula $\delta_S(\uy)$ such that $(\mm, \theta) \models \delta_S(\uu)$ holds if and only if $\uu$ is compatible with $S$.
\end{fact}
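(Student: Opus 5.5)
The statement asks for an $\LKThe$-formula $\delta_S(\uy)$ such that $(\mm, \theta) \models \delta_S(\uu)$ holds exactly when $\uu$ is compatible with $S$. Compatibility is a conjunction of finitely many kernel conditions, so the plan is to write it out literally.

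By Definition \ref{def_compatible}, compatibility of $\uu = (u_1, \dots, u_n)$ with $S(\ux; \uy) = \bigwedge_{k=1}^n f_k^{q_k}[\theta](x_{\ldd, k}) = y_k$ means: for every $k \in \set{1, \dots, n}$ with $f_k \in \Kp{0<C<\infty}$, we have $u_k \in \Ker(f_k^{C - q_k})$. Let $I \subseteq \set{1, \dots, n}$ be the finite set of such indices $k$; it depends only on the parametrized system $S$ and on $C$. For each $k \in I$, put $\chi_k := f_k^{C(f_k) - q_k} \in K[X]$. This is a well-defined polynomial because $0 < q_k \leq C(f_k) < \infty$ by condition (ii) of Definition \ref{def_param_c_sequence_system}. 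Let $d_k := \deg(\chi_k)$. Then $\chi_k[\theta](y_k) = 0$ is expressed by the quantifier-free $\LKThe$-formula
$$
\sum\nolimits_{i=0}^{d_k} (\chi_k)_i \cdot \theta^i(y_k) = 0 .
$$

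Define
$$
\delta_S(\uy) := \bigwedge\nolimits_{k \in I} \sum\nolimits_{i=0}^{d_k} (\chi_k)_i \cdot \theta^i(y_k) = 0 ,
$$
taken to be the true formula if $I = \varnothing$. To pass from $\LKThe$ to the setting of $T^C_\theta$, read each symbol of this formula through the defining formulas $\Omega_{\VV}, \Omega_0, \Omega_+, \Omega_{\lambda\cdot}$, as in the translation remark preceding this subsection. For every $(\mm, \theta)$ and every $\uu \in \VV$, the formula $\delta_S(\uu)$ holds if and only if $\chi_k[\theta](u_k) = 0$ for all $k \in I$, i.e.\ $u_k \in \Ker(f_k^{C-q_k})$ for all $k \in I$, which is exactly compatibility. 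If $q_k = C(f_k)$, then $\chi_k = 1$ and the conjunct correctly forces $u_k = 0$.

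No step is a real obstacle. The one point to get right is the bookkeeping: the set $I$ and the exponents $C(f_k) - q_k$ are determined by $C$ and $S$ at the meta level, so $\delta_S$ is a single finite formula, with no uniformity over parameters needed. Note that the formula does not even need existential quantifiers.
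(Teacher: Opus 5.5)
Your proposal is correct and is the intended argument. The paper only cites Lemma 3.36 of \cite{Chi25} without proof, and $\delta_S(\uy)$ is the quantifier-free conjunction of the equations $f_k^{C-q_k}[\theta](y_k) = 0$ over those $k$ with $f_k \in \Kp{0<C<\infty}$, exactly as you define it. You could optionally add the conjunct $\uy \in \VV$, since compatibility is only defined for tuples in $\VV$; this is immaterial because the axiomatization quantifies over $\uy \in \VV$.
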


\begin{definition}[see Definition 1.11 in \cite{dEl21b}] \label{def_hfour}
    We say that $T$ (with the specific choice of $\VV$) satisfies $(\operatorname{H4})$ if, for every $L$-formula $\psi(\ux; \uw)$, there is an $L$-formula $\sigma_\psi(\uw)$ such that, for all $\mm \models T$ and $\ud \in M$, we have $\mm \models \sigma_\psi(\ud)$ if and only if one of the following two equivalent conditions holds:
    \begin{enumerate}[(i)]
        \item The formula $\psi(\ux; \ud)$ implies no finite disjunction of non-trivial linear dependencies in $\ux$ over $\VV$.
        \item There are an elementary extension $\mm' \succ \mm$ and a tuple $\uv' \in \VV'$ linearly independent over $\VV$ such that $\mm' \models \psi(\uv'; \ud)$.
    \end{enumerate}
\end{definition}

\begin{theorem}[Theorem 3.39 in \cite{Chi25}] \label{theorem_first_oder}
If $T$ satisfies \Hfour{}, then $T_\theta^C$ has a model companion $T\theta^C$, i.e., a first-order axiomatization of the class of existentially closed models.
It is axiomatized by the theory \hbox{$T_\theta \cup \set{\text{``$\theta$ is $C$-image-complete''}}$} together with the sentence
$$
\forall \uw:\forall \uy \in \VV : (\sigma_\psi(\uw) \wedge \delta_S(\uy)) \rightarrow \exists \ux \in \VV : \psi_\theta(\ux; \uw) \wedge S(\ux; \uy)
$$
for every parametrized $C$-sequence-system $S(\ux; \uy)$ and every $L$-formula $\psi(\uxvec; \uw)$ that is boun\-ded by $S$.
\end{theorem}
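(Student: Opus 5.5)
The plan is to obtain the statement directly from the characterization in Theorem~\ref{theorem_big_characterization}, using that $T^C_\theta$ is inductive (Lemma 3.2 in \cite{Chi25}). For an inductive theory, a model companion exists exactly when the class of existentially closed models is elementary, and the model companion is then any axiomatization of that class. So it suffices to show two things: a model $(\mm,\theta)\models T^C_\theta$ satisfies the listed axioms if and only if it satisfies the two conditions of Theorem~\ref{theorem_big_characterization}, and the listed axioms are first-order and imply $T^C_\theta$.

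First I will check that the axioms are first-order. The clause ``$\theta$ is a $C$-endomorphism'' is a countable set of sentences of the form $\Ker(f^C)=\Ker(f^{C+1})$, or the single sentence $\mipo(C)[\theta]=0$ in the algebraic case. The clause ``$C$-image-complete'' adds the sentences $\Image(f^{C+1})=\Image(f^C)$ for $f\in\Kp{C<\infty}$. Each such sentence is a finite statement in the formulas $\Omega$ together with $\theta$. Hence $T_\theta\cup\{\text{``$\theta$ is $C$-image-complete''}\}$ contains $T^C_\theta$ and is first-order, and the remaining schema is first-order because $\sigma_\psi$ and $\delta_S$ are formulas.

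The main step is matching the schema with the existential condition of Theorem~\ref{theorem_big_characterization}. Let $S(\ux)=S'(\ux;\uu)$ be a $C$-sequence-system over $(\VV,\theta)$ and let $\psi(\uxvec)$ be an $L(M)$-formula bounded by $S$. Only finitely many placeholder variables occur in $\psi$. I therefore write $\psi(\uxvec)=\psi'(\uxvec;\ud)$, where $\psi'$ is an $L$-formula in finitely many placeholders and $\ud\in M$ is an arbitrary tuple of parameters, not necessarily in $\VV$; this is why the axiom quantifies over all $\uw$ and not only over $\uw\in\VV$. Boundedness is a syntactic property of $\psi'$, so $\psi'$ is an admissible instance of the schema. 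Compatibility of $\uu$ is exactly $\delta_{S'}(\uu)$, and by \Hfour{}, condition (i), ``$\psi'(\uxvec;\ud)$ implies no finite disjunction of non-trivial linear dependencies over $\VV$'' is exactly $\mm\models\sigma_{\psi'}(\ud)$.

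I expect the main obstacle to be a mismatch of variables. In the characterization, ``no disjunction of dependencies'' refers to the whole placeholder sequence $\uxvec$, whereas $\sigma_{\psi'}$ only speaks about the finitely many placeholders occurring in $\psi'$. To reconcile these I will use the equivalent form (ii) of Definition~\ref{def_hfour}. If there is a realization $\uv'$ in some $\mm'\succ\mm$ of the occurring placeholders that is linearly independent over $\VV$, then after passing to a sufficiently saturated extension I can add infinitely many further elements that are independent over $\VV'$ and assign them to the non-occurring placeholders. This produces a realization of the entire sequence that is independent over $\VV$, and the converse direction is trivial. One also has to note that $\sigma_{\psi'}$ is evaluated in the $L$-reduct $\mm$, which is exactly where the characterization evaluates $\psi$.

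With this identification, $(\mm,\theta)$ satisfies the schema if and only if the existential condition of Theorem~\ref{theorem_big_characterization} holds. Combined with $C$-image-completeness, the theorem then says that the models of the stated theory are exactly the existentially closed models of $T^C_\theta$. So the class of existentially closed models is elementary and, by inductivity, the stated theory is the model companion $T\theta^C$.
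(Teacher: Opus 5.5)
Your proposal is correct and follows the intended route. The paper only cites this result from \cite{Chi25}, but its surrounding remarks lay out exactly your argument: $T^C_\theta$ is inductive, so the model companion is the axiomatization of the existentially closed models, and $\delta_S$ and $\sigma_\psi$ are introduced precisely to express compatibility and the no-dependency condition of Theorem~\ref{theorem_big_characterization}, with $\uw$ ranging over all of $M$. One harmless slip: the $C$-endomorphism and $C$-image-completeness axioms are indexed by $\Kp{}$, so they need not form a countable set when $K$ is uncountable.
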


\begin{example} \label{examples_hfour}
    \Hfour{} holds in the following settings:
    \begin{enumerate}[(i)]
        \item The theory $\TKvs$ with the vector space $(\VV, +, 0, (\lambda \cdot)_{\lambda \in K})$ being the entire structure satisfies \Hfour{}.
        This follows easily from quantifier elimination.
        \item Any complete and model-complete o-minimal theory $T$ extending the theory of divisible ordered abelian groups, with $(\VV, +, 0, (q \cdot)_{q \in \QQ})$ being a subinterval of the line (but not necessarily a subgroup) and continuous operations, satisfies \Hfour{} if and only if there is no infinite definable family of germs of $(\VV, +, 0, (q \cdot)_{q \in \QQ})$-endomorphisms at $0_\VV$; combine Theorem 2.4 and Lemma 2.9 in \cite{Blo23}.
        \item Any complete and model-complete o-minimal expansion of $\RCF{}$ with $(\VV, +, 0, (q \cdot)_{q \in \QQ})$ given by $(R_{>0}, \cdot, 1, (x \mapsto x^q)_{q\in \QQ})$ satisfies \Hfour{} if and only if no partial exponential function is definable.
        This is a special case of (ii); see the proof of Theorem A in \cite{Blo23}.
        \item Let $\FF_q$ be a finite field with $q = p^r$.
        If $T$ expands the theory of an $\FF_q$-vector space and $\VV$ is that vector space, then $T$ satisfies \Hfour{} if and only if it eliminates $\exists^\infty$; see the proof of Theorem 5.2 in \cite{dEl21b}.

        For expansions of the theories $\ACF{}_p$, $\operatorname{SCF}_{p, e}$ ($e$ either finite or infinite), $\operatorname{Psf}_p$, $\operatorname{ACFA}_p$, and $\operatorname{DCF}_p$, this implies that whenever $\FF_q$ is contained in every model as constants, \Hfour{} holds with the $\FF_q$-vector space given by addition.
        For more details, see Example 5.10 in \cite{dEl21b}.
    \end{enumerate}
\end{example}

\noindent In order to apply our characterization of existentially closed models, we often need to check that a formula $\psi(\uxvec{})$ implies no finite disjunction of non-trivial linear dependencies in $\uxvec{}$ over $\VV$. Lemma \ref{lemma_iter_lin_indep} and Corollary \ref{corollary_iter_lin_indp} below help us to do so.

\begin{lemma} \label{lemma_iter_lin_indep}
    Let $t_1(\ux), \dots, t_m(\ux)$ be $\LK(\VV)$-terms in $\ux = (x_1, \dots, x_n)$ that are linearly independent over $\VV$, and let $\uu = (u_1, \dots, u_m) \in \VV'$ be linearly independent over $\VV$, where $\mm' \succ \mm$.
    Then there exists $\uv \in \VV''$, for some elementary extension $\mm'' \succ \mm'$, such that $\uv$ is linearly independent over $\VV$ and $\mm'' \models t_k(\uv) = u_k$ for $k = 1, \dots, m$.
\begin{proof}
    Write $t_k(\ux) = \sum\nolimits_{l=1}^n \lambda_{k, l} \cdot x_l + u_{0,k}$.
    By replacing each $u_k$ with $u_k - u_{0,k}$, we can assume that $u_{0,k} = 0$.
    Now we can write $\bigwedge_{k=1}^m t_k(\ux) = u_k$ as $A \cdot \ux = \uu$, where we treat the tuples $\ux$ and $\uu$ as column matrices of sizes $n \times 1$ and $m \times 1$, respectively, and define
    $$
    A := \begin{tikzpicture}[baseline=(Frame.base)]
    \drawText{0}{0}{\lambda_{1,1}}
    \drawText{0}{2}{\lambda_{m,1}}
    \drawText{2}{0}{\lambda_{1,n}}
    \drawText{2}{2}{\lambda_{m,n}}
    \drawHDots{1}{0}{1}
    \drawHDots{1}{2}{1}
    \drawVDots{0}{1}{1}
    \drawVDots{2}{1}{1}
    \drawBorder{0}{0}{3}{3}
    \end{tikzpicture}\in K^{m \times n}.
    $$
    By the linear independence of $t_1(\ux), \dots, t_m(\ux)$, the matrix $A$ has full row rank, so $m \leq n$.
    Hence, there are invertible matrices $P \in K^{m \times m}$ and $Q \in K^{n \times n}$ such that
    $
    P \cdot A \cdot Q = \Diag_{m \times n}(1, \dots, 1).
    $
    Write $\uv' := P \cdot \uu$, and note that this tuple is still linearly independent over $\VV$, since $P$ is invertible.
    Now choose $\mm'' \succ \mm'$ and $\uv'' := (v''_{m+1}, \dots, v''_n)$ such that $\uv'\uv''$ is linearly independent over $\VV$.
    The tuple $\uv := Q \cdot \uv'\uv''$ is again linearly independent.
    Let $D := \Diag_{m \times n}(1, \dots, 1)$, and note that this is the matrix which, when multiplying from the left, projects an $n$-vector down to its first $m$ entries.
    By construction, we obtain
    $$
    P \cdot A \cdot \uv = P \cdot A \cdot Q \cdot Q^{-1} \cdot \uv = D \cdot Q^{-1} \cdot \uv = D \cdot \uv'\uv'' = \uv' = P \cdot \uu.
    $$
    Since $P$ is invertible, we also have $A \cdot \uv = \uu$, i.e., $\mm'' \models \bigwedge_{k=1}^m t_k(\uv) = u_k$.
    Since $\uv$ is linearly independent over $\VV$, we can conclude.
\end{proof}
\end{lemma}

\begin{corollary}
    \label{corollary_iter_lin_indp}
    Let $\ux = (x_i : i \in \ii)$.
    If $\varphi(\ux)$ implies no finite disjunction of non-trivial linear dependencies in $\ux$ over $\VV$, and $(t_{i}(\ux') : i \in \ii)$ is a sequence of $\LK(\VV)$-terms linearly independent over $\VV$, then $\varphi((t_{i}(\ux') : i \in \ii))$ implies no finite disjunction of non-trivial linear dependencies in $\ux'$ over $\VV$.
\begin{proof}
    Let $\ii_0$ be the finite subset of all $i \in \ii$ for which $x_i$ actually appears in $\varphi(\ux)$, and let $\ux'_0$ be the finite subtuple of $\ux'$ consisting of all variables that actually appear in $(t_{i}(\ux') : i \in \ii_0)$.
    For convenience, we write $\ux' = \ux'_0\ux'_1$, and, for $i \in \ii_0$, we may also treat $t_i(\ux')$ as a term in $\ux'_0$.
    Now let $\uu = (u_i : i \in \ii) \in \VV'$ in some $\mm' \succ \mm$ be a realization of $\varphi(\ux)$ linearly independent over $\VV$.
    By Lemma \ref{lemma_iter_lin_indep}, we can find $\uv_0 \in \VV''$ for some $\mm'' \succ \mm'$ such that $\uv_0$ is linearly independent over $\VV$ and $(t_{i}(\uv_0) : i \in \ii_0) = (u_i : i \in \ii_0)$.
    Now, for any choice of $\uv_1$ of the right length in the $\VV$ of some elementary extension of $\mm''$, the tuple $\uv := \uv_0\uv_1$ is a realization of $\varphi((t_{i}(\ux') : i \in \ii))$.
    Since $\uv_0$ is linearly independent over $\VV$, we can choose $\uv_1$ so that $\uv$ is linearly independent over $\VV$.
    Hence $\varphi((t_{i}(\ux') : i \in \ii))$ implies no finite disjunction of non-trivial linear dependencies in $\ux'$ over $\VV$.
\end{proof}
\end{corollary}

\noindent We give one final definition for (parametrized) $C$-sequence-system:

\begin{definition} \label{def_rk_deg_def}
    Let $S(\ux; \uy) = \bigwedge_{k=1}^n f_k^{q_k}[\theta](x_{\ldd, k}) = y_k$ be a parametrized $C$-sequence-system, as in Definition \ref{def_param_c_sequence_system} (i.e., $\ux = \ux_\li\ux_\ld$ and so on).
    We define:
    \begin{enumerate}[(i)]
        \item The \textbf{rank} of $S(\ux; \uy)$ as $\rk(S) := m = |\ux_\li|$.
        \item The \textbf{degree} of $S(\ux; \uy)$ as $\deg(S) := \sum\nolimits_{k=1}^n \deg(f_k^{q_k})$.
    \end{enumerate}
    We define the rank and degree of a $C$-sequence-system (i.e., a parametrized $C$-sequence-system with compatible parameters plugged in) to be the rank and degree of the underlying parametrized $C$-sequence-system.
\end{definition}

\noindent So, for every (parametrized) $C$-sequence-system, $(\rk(S), \deg(S))$ lies in $\NN^2$.  
Note that $(\NN^2, <_\Lex)$ is a well-ordered set, where $<_\Lex$ is the lexicographical order, which defines the first entry to be more significant than the second entry.
This allows us to do inductions on $C$-sequence-systems.

Assuming \Hfour{}, we obtained some results in \cite{Chi25b} for $T\theta^C$ regarding completions, the algebraic closure, and quantifier elimination. We will state their consequences for $\TKvs\theta^C$, the model companion of $\TKvsTheC$.

\begin{definition}(Definition 4.2 in \cite{Chi25b})
    Given an arbitrary set $A \subseteq M$, where $(\mm, \theta) \models T_\theta \cup \set{\text{``$\theta$ is $C$-image-complete''}}$, we define $\cl_\theta(A)$ as the smallest set that contains $A$ and is closed under both $\acl_L$ and multiplication with any element in $R_C$.
\end{definition}

\begin{example}[Example 4.11, Observation 4.12, and Example 4.17 in \cite{Chi25b}]
    \label{example_kvs} 
    The theory $\TKvs\theta^C$ is complete and has quantifier elimination in the language of $R_C$-modules, i.e., $\LRC = (0, +, (r \cdot)_{r\in R_C})$. Furthermore, if $R_C$ is a field, then $\TKvs\theta^C$ is the theory of $R_C$-vector spaces and hence strongly minimal. Also $\acl_{\LKThe} = \dcl_{\LKThe} = \cl_\theta$ holds in $\TKvs\theta^C$. 
\end{example}

\section{A better characterization for theories close to $\TKvs$}
The characterization of existentially closed models of $T^C_\theta$ given in Theorem \ref{theorem_big_characterization} imposes few assumptions on $T$.
Consequently, there can be many definable interactions between $\VV$ and the structure outside of $\VV$.
For some theories, this leads to an unnecessarily complicated characterization.
In this section, we simplify our characterization for theories $T$ that do not have much definable interaction between $\VV$ and the structure outside of $\VV$, in the following sense:

\begin{definition} \label{def_acl_cond}
    We say $T$ satisfies $(V)$ if, for any $\mm' \succ \mm \models T$ and any tuple $\uv'$ in $\VV'$, $\uv'$ is $\acl_L$-independent over $M$ if and only if it is linearly independent over $\VV$.
    Here $\uv' = (v'_1, \dots, v'_m)$ being $\acl_L$-independent over $M$ means that $v'_k \not\in \acl_L(M \uv' \setminus \set{v'_k})$ for $k =1, \dots, m$.
\end{definition}

\begin{example} \label{example_acl_cond}
    The following holds:
    \begin{enumerate}[(i)]
        \item The theory of $K$-vector spaces $\TKvs$ satisfies \aclCond{}.
        \item If $K$ is an ordered field, then the theory of ordered $K$-vector spaces $T_{K\!\operatorname{-ovs}}$ satisfies \aclCond{}.
        In particular, the theory of ordered $\QQ$-vector spaces $T_{\QQ\operatorname{-ovs}} = \Th(\RR, 0, +, (q\cdot)_{q\in \QQ}, <)$ satisfies \aclCond{}.
        \item Set $K = \QQ$.
        The theory $T := \Th(\RR, 0, +, \pi \cdot,  (q\cdot)_{q\in \QQ })$, with $\VV$ being the $\QQ$-vector space given by addition, does not satisfy \aclCond{}.
        To see this, choose $\mm' \succ \mm \models T$ and some $v \in M' \setminus M$.
        Then the tuple $(v, \pi \cdot v)$ is $\QQ$-linearly independent over $M$, but clearly not $\acl_L$-independent over $M$.
    \end{enumerate}
\end{example}

\subsection{Interactions between \aclCond{} and \Hfour{}}

Since $\TKvs$ satisfies \Hfour{} and condition \aclCond{} states that, for tuples in $\VV$, algebraic independence is essentially the same as algebraic independence in $\TKvs$ (i.e., linear independence), one might be tempted to think that \aclCond{} implies \Hfour{}.
While this is not true, it is almost true; we just need to additionally assume that $T$ eliminates $\exists^\infty x \!\in\!\VV$ as follows:
\begin{definition} \label{def_ex_inf_v}
    We say that $T$ \textbf{\boldmath eliminates ${\exists^\infty x \!\in\!{\VV}}$} if, for any $L$-formula $\psi(\ux; \uw)$, there is an $L$-formula $\phi(\uw)$ such that, for any $\mm \models T$ and $\ud \in M$, the following holds:
    $$
    \mm \models \phi(\ud) \quad \Leftrightarrow \quad |\psi(\mm; \ud) \cap \VV| \geq \aleph_0.
    $$
\end{definition}
\noindent As with the usual elimination of $\exists^\infty$, one can easily show that $\phi(\uw)$ must be equivalent to the formula $\exists^{>q} \ux\!\in\! \VV : \psi(\ux; \uw)$ for some $q \in \NN$ that depends on $\psi(\ux; \uw)$, and that it is enough to show elimination in a single variable $x$.

\begin{lemma} \label{lemma_aclCond_Hfour}
    If $T$ satisfies \aclCond{} and eliminates $\exists^\infty x \!\in\! \VV$, then $T$ also satisfies \Hfour{}.
\begin{proof}
    Given an $L$-formula $\psi(\ux; \uw)$ with $\ux = (x_1, \dots, x_m)$, we claim that the following formula is the one from \Hfour{} (see Definition \ref{def_hfour}):
    $$
    \sigma_\psi(\uw) := \exists^\infty x_1 \!\in\! \VV : \dots \exists^\infty x_m \!\in\! \VV : \psi(\ux; \uw).
    $$
    Assume that $\mm \models T$ and $\ud \in M$ are given, and that there are an elementary extension $\mm' \succ \mm$ and a tuple $\uv' \in \VV'$ linearly independent over $\VV$ with $\mm' \models \psi(\uv'; \ud)$.
    Without loss $\mm'$ is sufficiently saturated.
    We need to show that $\sigma_\psi(\ud)$ holds.
    By \aclCond{}, the tuple $\uv'$ must also be $\acl_L$-independent over $M$.
    Now assume we have already shown
    $$
    \exists^\infty x_{k+1} \!\in\! \VV : \dots \exists^\infty x_m \!\in\! \VV : \psi(y_1,\dots,y_k, x_{k+1},\dots,x_m; \ud) \in \tp(v'_1, \dots, v'_k/ \ud).
    $$
    Since $v'_k \not\in \acl_L(v'_1\dots v'_{k-1}\ud)$, we have $\tp(v'/v'_1, \dots, v'_{k-1}\ud) = \tp(v'_k/v'_1, \dots, v'_{k-1}\ud)$ for infinitely many $v' \in \VV'$.
    We immediately see that
    $$
    \exists^\infty x_{k} \!\in\! \VV : \dots \exists^\infty x_m \!\in\! \VV : \psi(y_1,\dots,y_{k-1}, x_k,\dots,x_m; \ud) \in \tp(v'_1, \dots, v'_{k-1}/ \ud)
    $$
    also holds.
    Repeating the above yields $\mm \models \sigma_\psi(\ud)$.

    Now assume that $\sigma_\psi(\ud)$ holds for some $\mm \models T$ and $\ud \in M$.
    By compactness, we can construct a chain $\mm \prec \mm_1 \prec \dots \prec \mm_m := \mm'$ of elementary extensions with elements $v'_k \in \VV_k \setminus \VV_{k-1}$ such that
    $
    \mm' \models \psi(v'_1, \dots, v'_m; \ud)
    $
    holds.
    Notice that, by construction, the tuple $\uv' := (v'_1, \dots, v'_m) \in \VV'$ is linearly independent over $\VV$.
\end{proof}
\end{lemma}

\noindent Obviously, the converse of Lemma \ref{lemma_aclCond_Hfour} does not hold:

\begin{example}
    $\RCF{}$ with $(\VV, 0, +, (q\cdot)_{q \in \QQ})^\rr := (\rr_{>0}, 1, \cdot, (x \mapsto x^q)_{q \in \QQ})$ satisfies \Hfour{}, but not \aclCond{}.
\end{example}



\noindent It is easy to check that if $T$ satisfies \Hfour{}, then $T$ must also eliminate $\exists^\infty x \!\in\! \VV$.
In fact, we can show something even stronger, which is similar to Remark 1.10 in \cite{dEl21b}:

\begin{theorem} \label{obser_no_elim_exist_inf_no_model_companion}
    If $T$ does not eliminate $\exists^\infty x \in \VV$, then the model companion of $T^C_\theta$ does not exist for any non-trivial $C \in \Cc$.
\begin{proof}
    Suppose that $T$ does not eliminate $\exists^\infty x \!\in\! \VV :\psi(x; \uw)$ and that $T^C_\theta$ has a model companion $T\theta^C_*$ for some fixed non-trivial $C \in \Cc$.
    Without loss of generality, assume that $\psi(x; \ud)$ defines a subset of $\VV \setminus\set{0}$ for any $\ud \in \mm \models T$.
    Fix $i \in \omega$. Choose a sufficiently saturated model $\mm_i \models T$ and a tuple $\ud_i \in M_i$ such that $\psi(x; \ud_i)$ defines a finite set with at least $i$ elements. Let $\bb^0 := \set{v^0_\alpha : \alpha \in I}$ be a basis of $\spanA{\psi(\VV_i; \ud_i)}{K}$.
    We define a partial $C$-endomorphism on a larger linearly independent set as follows.
    If $C$ is algebraic, extend $\bb^0$ to a linearly independent set $\bb := \set{v^j_\alpha : \alpha \in I, 0 \leq j < \deg(\mipo(C))}$ and define $\theta_i$ on $\spanA{\bb}{K}$ by setting
    $$
    \theta_i(v^j_\alpha) := \begin{cases}
        v^{j+1}_\alpha & \text{if $j < \deg(\mipo(C)) - 1$}, \\
        \sum\nolimits_{k= 0}^{\deg(\mipo(C)) - 1} - (\mipo(C))_k \cdot v^{k}_\alpha & \text{if $j = \deg(\mipo(C))-1$}.
    \end{cases}
    $$
    If $C$ is transcendental, extend $\bb^0$ to a linearly independent set $\bb := \set{v^j_\alpha : \alpha \in I,\ j \in \omega}$ and define $\theta_i$ on $\spanA{\bb}{K}$ by setting $\theta_i(v^j_\alpha) := v^{j+1}_\alpha$ for all $j \in \omega$.
    One can easily check that this defines a $C$-endomorphism on $\spanA{\bb}{K} \subseteq \VV_i$ in any case.
    Now we can extend $\theta_i$ to all of $\VV_i$ using our \standartConstruction{}.
    Since $(\mm_i, \theta_i)$ is contained in an existentially closed model of $T^C_\theta$, we can also assume that $(\mm_i, \theta_i)$ is existentially closed, i.e., a model of $T\theta^C_*$.
    Fix any $v \in \psi(\VV_i; \ud_i)$.
    By construction of $\theta_i$, we have $v \in \spanA{v^0_\alpha : \alpha \in I}{K}$ and $\theta_i(v) \in \spanA{v^1_\alpha : \alpha \in I}{K}$.
    Since $C$ is non-trivial, the spaces $\spanA{v^0_\alpha : \alpha \in I}{K}$ and $\spanA{v^1_\alpha : \alpha \in I}{K}$ have trivial intersection.
    Since $0 \notin \psi(\VV; \ud_i)$, we obtain $\theta_i(v) \not\in \psi(\VV; \ud_i)$. Together with compactness, this shows that the type
    $$
    \set{|\psi(\VV; \uw)| \geq i \wedge \forall x \in \VV : \psi(x; \uw) \rightarrow \neg\psi(\theta(x); \uw) : i \in \omega}
    $$
    has a realization $\ud$ in some model $(\mm, \theta) \models T\theta^C_*$.

    Since $\psi(x; \ud)$ defines an infinite subset of $\VV$, it implies no finite disjunction of non-trivial linear dependencies in $x$ over $\VV$.
    It is now easy to check that the formula $\psi(x^0; \ud) \wedge \psi(x^1; \ud)$ implies no finite disjunction of non-trivial linear dependencies in $\xvec$ over $\VV$. If $C$ is transcendental, we can see that $S(x) = \top$ is a $C$-sequence-system over $(\VV, \theta)$ that bounds $\psi(x^0; \ud) \wedge \psi(x^1; \ud)$, so we can use Theorem \ref{theorem_big_characterization} to show that there is some $v \in \VV$ with $(\mm, \theta) \models \psi(v; \ud) \wedge \psi(\theta(v); \ud)$. If $C$ is algebraic, we can also find such a $v$; for details, see the proof of Claim 4.20.1 in \cite{Chi25b}.
    In any case, the existence of such a $v$ contradicts our definition of $\ud$.
\end{proof}
\end{theorem}

\begin{corollary} \label{corollary_conclusion_hfour_aclcond}
    Suppose $T$ satisfies \aclCond{}.
    Then $T$ satisfies \Hfour{} if and only if $T$ eliminates $\exists^\infty x \in \VV$.
\end{corollary}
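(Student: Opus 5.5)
The corollary has two directions, and both follow almost immediately from results already established in this section.

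For the direction from right to left, assume $T$ satisfies \aclCond{} and eliminates $\exists^\infty x \in \VV$. This is exactly the hypothesis of Lemma \ref{lemma_aclCond_Hfour}, so $T$ satisfies \Hfour{}. No additional work is needed.

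For the direction from left to right, we do not need \aclCond{} at all. Assume $T$ satisfies \Hfour{}. I see two ways to conclude.
\begin{enumerate}[(i)]
    \item \emph{Via model companions.} Pick a non-trivial kernel configuration $C$, for instance $C_0$. By Theorem \ref{theorem_first_oder}, the theory $T^C_\theta$ has a model companion. The contrapositive of Theorem \ref{obser_no_elim_exist_inf_no_model_companion} then gives that $T$ eliminates $\exists^\infty x \in \VV$.
    \item \emph{Directly.} Given an $L$-formula $\psi(x;\uw)$ in one variable, consider $\psi(x;\uw) \wedge \Omega_\VV(x)$ and let $\sigma_\psi(\uw)$ be the formula provided by \Hfour{}. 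By (i) of Definition \ref{def_hfour}, a one-variable formula implies no finite disjunction of non-trivial linear dependencies over $\VV$ exactly when it is not contained in a finite union of sets of the form $\{x : x = v\}$ with $v \in \VV$. For a subset of $\VV$, this means exactly that the set is infinite. So $\sigma_\psi(\ud)$ holds if and only if $|\psi(\mm;\ud)\cap\VV| \ge \aleph_0$, which gives elimination of $\exists^\infty x \in \VV$. Elimination in a single variable suffices, as remarked after Definition \ref{def_ex_inf_v}.
\end{enumerate}

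I would write out route (ii) because it is self-contained, and mention route (i) as an alternative. The only step needing care is the identification in route (ii): for a single variable, the non-trivial linear dependencies over $\VV$ are $\lambda x = v$ with $\lambda \neq 0$, which is equivalent to $x = \lambda^{-1} v$. So no such finite disjunction is implied precisely when the definable subset of $\VV$ is not a finite set of points. Nothing here is a real obstacle.
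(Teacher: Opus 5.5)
Your proposal is correct and follows the paper's own reasoning. The right-to-left direction is exactly Lemma \ref{lemma_aclCond_Hfour}; for left-to-right, the paper notes just before Theorem \ref{obser_no_elim_exist_inf_no_model_companion} that \Hfour{} easily yields elimination of $\exists^\infty x \in \VV$ (your route (ii)), and it also gives the stronger Theorem \ref{obser_no_elim_exist_inf_no_model_companion}, which together with Theorem \ref{theorem_first_oder} is your route (i).
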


\subsection{The better characterization}

We now improve our characterization from Theorem \ref{theorem_big_characterization} for theories $T$ that satisfy \aclCond{} and eliminate $\exists^\infty \ux \!\in\!\VV$.
We do so by restricting the $C$-sequence-systems one has to consider:

\begin{definition} \label{def_atomic_c_ss}
    Fix $C \in \Cc$ and let $(\VV, \theta) \models \TKvsThe$ be given.
    We say that an $\LKThe(\VV)$-formula $S(x)$ is an \textbf{atomic} $C$-sequence-system over $(\VV, \theta)$ if one of the following holds:
    \begin{enumerate}[(i)]
        \item $S(x) = \top$ and $C \in \Cctrans$.
        \item $S(x) = f[\theta](x) = u$ for some $f \in \Kp{C=\infty}$ and $u \in \VV$.
        \item $S(x) = f[\theta](x) = u$ for some $f \in \Kp{0<C<\infty}$ and $u \in \Ker(f^{C-1})$.
    \end{enumerate}
    Every atomic $C$-sequence-system is indeed a $C$-sequence-system as in Definition \ref{def_c_sequence_system}.
\end{definition}

\begin{theorem} \label{theorem_easier_char_acl_cond}
    Suppose $T$ satisfies \aclCond{} and eliminates $\exists^\infty x \!\in\!\VV$.
    Then a model $(\mm, \theta) \models T^C_\theta$ is existentially closed if and only if $\theta$ is $C$-image-complete and the condition
    $$
    (\mm, \theta) \models \exists x \in \VV : \psi_\theta(x) \wedge S(x)
    $$
    holds for every atomic $C$-sequence-system $S(x)$ and every $L(M)$-formula $\psi(\xvec)$ that is bounded by $S$ and implies no finite disjunction of non-trivial linear dependencies in $\xvec$ over $\VV$.
    Furthermore, the model companion of $T^C_\theta$ exists for every kernel configuration $C \in \Cc$.
\begin{proof}
    The existence of the model companion follows from Theorem \ref{theorem_first_oder}, since $T$ satisfies \Hfour{} by Lemma \ref{lemma_aclCond_Hfour}.
    By Theorem \ref{theorem_big_characterization}, the left-to-right direction is immediate.
    It remains to prove the right-to-left direction.

    Fix a model $(\mm, \theta) \models T^C_\theta$ satisfying the right-hand side of Theorem \ref{theorem_easier_char_acl_cond}.
    Since $(\mm, \theta)$ is already $C$-image-complete, Theorem \ref{theorem_big_characterization} reduces existential closedness to the following condition:
    $$
    (\mm, \theta) \models \exists \ux \in \VV : \psi_\theta(\ux) \wedge S(\ux)
    $$
    holds for every $C$-sequence-system $S(\ux)$ and every $L(M)$-formula $\psi(\uxvec)$ that is bounded by $S(\ux)$ and implies no finite disjunction of non-trivial linear dependencies in $\uxvec$ over $\VV$.
    We prove this condition by induction on $(\rk(S), \deg(S))$, as defined in Definition \ref{def_rk_deg_def}.
    Let the given $C$-sequence-system be
    $$
        S(\ux) = \bigwedge\nolimits_{k=1}^n f_k^{q_k}[\theta](x_{\ldd, k}) = u_k
    $$
    with all notation as in Definition \ref{def_c_sequence_system}:
    We have $\ux = \ux_\li\ux_\ld$, where $\ux_\li = (x_{\li, 1}, \dots, x_{\li, m})$ and $\ux_\ld = (x_{\ld, 1}, \dots, x_{\ld, n})$; if $C$ is algebraic, then $m = 0$; every $f_k$ lies in $\Kp{0<C}$; every $q_k$ is a positive integer; and, for all $k$ with $C(f_k) < \infty$, we have $0 < q_k \leq C(f_k)$ and $u_k \in \Ker(f_k^{C-q_k})$.

    For the following arguments, we remind the reader that, given a formula $\psi(\ux; \uw)$, we let $\sigma_\psi(\uw)$ be the formula from \Hfour{} (see Definition \ref{def_hfour}) expressing that $\psi(\ux; \uw)$ implies no finite disjunction of non-trivial linear dependencies in $\ux$ over $\VV$.
    We will also use \placeholderNotation{} (Definition \ref{def_placeholder_notation}) heavily in the following arguments:
    Recall that, given a tuple $\ux = (x_1, \dots, x_m)$, we let $\uxvec := (x^i_k : 1 \leq k \leq m, i \in \omega)$ be the tuple of all placeholders for $\theta^i(x_k)$.
    Given a formula $\psi(\uxvec; \uw)$, we let $\psi_\theta(\ux; \uw)$ be the formula obtained from $\psi(\uxvec; \uw)$ by replacing every occurrence of $x^i_k$ with $\theta^i(x_k)$.
\begin{subclaim} \label{case_base_case}
    If $(\rk(S), \deg(S)) = (0,0)$, then $(\mm, \theta) \models \exists \ux \in \VV : \psi_\theta(\ux) \wedge S(\ux)$.
\begin{innerproof}
    The assumption $(\rk(S), \deg(S)) = (0,0)$ implies that $\ux$ is empty.
    Therefore, $\psi_\theta(\ux)$ is just an $L(M)$-sentence $\psi$.
    The formula $S(\ux)$ is also just an empty conjunction, i.e., $\top$.
    By assumption on $\psi(\uxvec)$, it does not imply any finite disjunction of non-trivial linear dependencies in $\uxvec$ over $\VV$.
    Since $\ux$ is empty, so is $\uxvec$.
    Thus, the only finite disjunction of non-trivial linear dependencies in $\uxvec$ over $\VV$ is the empty disjunction, namely $\bot$.
    Hence $\psi$ holds.
    Therefore, $(\mm, \theta) \models \exists \ux \in \VV : \psi_\theta(\ux) \wedge S(\ux)$.
\end{innerproof}
\end{subclaim}

\noindent From now on, assume that
\begin{align}
    (\mm, \theta) \models \exists \ux_* \in \VV : \psi_{*,\theta}(\ux_*) \wedge S_*(\ux_*) \label{tag_induction_hypo}
\end{align}
holds for every $C$-sequence-system $S_*(\ux_*)$ with $(\rk(S_*), \deg(S_*)) <_\Lex (\rk(S), \deg(S))$, and every $L(M)$-formula $\psi_*(\uxvec{}_*)$ that is bounded by $S_*$ and implies no finite disjunction of non-trivial linear dependencies in $\uxvec{}_*$ over $\VV$.
We also recall that (\ref{tag_induction_hypo}) holds if $S_*(\ux_*)$ is an atomic $C$-sequence-system, by our assumptions on $(\mm, \theta)$.
For the induction step, recall that \aclCond{} implies the following:
Given $\mm' \succ \mm \models T$ and $\uv' \in \VV'$, the tuple $\uv'$ is $K$-linearly independent over $\VV$ if and only if it is $\acl_L$-independent over $M$.

\begin{subclaim} \label{case_rk_greater_0}
    If $\rk(S) > 0$, then $(\mm, \theta) \models \exists \ux \in \VV : \psi_\theta(\ux) \wedge S(\ux)$.
\begin{innerproof}
    Recall that $\rk(S) = |\ux_\li| = m$ by definition.
    In particular, $C$ is transcendental in this case.
    Let $S(\ux) = \bigwedge\nolimits_{k=1}^n f_k^{q_k}[\theta](x_{\ldd, k}) = u_k$ be as above Claim \ref{case_base_case}.
    Write
    $$
    \ux = x_{\lii, 1}(x_{\lii, 2}, \dots, x_{\lii, m})\ux_\ld =: x_{\lii, 1}\ux_*.
    $$
    Since $\psi(\uxvec)$ implies no finite disjunction of non-trivial linear dependencies in $\uxvec$ over $\VV$, we can find $\mm' \succ \mm$ and a realization $\vvec{}_{\lii, 1}'\uvvec{}'_* \in \VV'$ of $\psi(\uxvec)$ that is $K$-linearly independent over $\VV$.
    By \aclCond{}, the tuple $\uvvec{}'_*$ is $\acl_L$-independent over $\vvec{}_{\lii, 1}'M$.
    Hence we can find $\mm'' \succ \mm'$ and a realization of $\psi(\vvec{}_{\lii, 1}'\uxvec_*)$ that is linearly independent over $\VV'$.
    Therefore, $\mm' \models \sigma_{\varphi}(\vvec{}'_{\lii, 1})$, where $\sigma_\varphi(\xvec_{\lii, 1})$ is the formula from \Hfour{} for
    $
    \varphi(\uxvec{}_*; \xvec{}_{\lii, 1}) := \psi(\xvec{}_{\lii, 1}\uxvec{}_*).
    $
    The formula $\sigma_\varphi(\xvec{}_{\lii, 1})$ is bounded by the trivial atomic $C$-sequence-system $\top$; see (i) of Definition \ref{def_atomic_c_ss}.
    Since $\vvec{}_{\lii, 1}'$ realizes $\sigma_\varphi(\xvec{}_{\lii, 1})$, the formula $\sigma_\varphi(\xvec{}_{\lii, 1})$ implies no finite disjunction of non-trivial linear dependencies in $\xvec{}_{\lii, 1}$ over $\VV$.
    By applying (\ref{tag_induction_hypo}) with this trivial atomic $C$-sequence-system, there is some $v_{\lii, 1} \in \VV$ with
    $$
    (\mm, \theta) \models (\sigma_{\varphi})_\theta(v_{\lii, 1}).
    $$
    This means that $\varphi(\uxvec{}_*; (\theta^i(v_{\lii, 1}) : i \in \omega))$ implies no finite disjunction of non-trivial linear dependencies in $\uxvec{}_*$ over $\VV$.
    Moreover, $\varphi(\uxvec{}_*; (\theta^i(v_{\lii, 1}) : i \in \omega))$ is bounded by the $C$-sequence-system $S_*(\ux_*) := S(x_{\lii, 1}\ux_*)$ (i.e., we just remove the variable $x_{\li, 1}$ from $S(\ux)$; recall that the subtuple $\ux_\li \subseteq \ux$ does not actually appear in a $C$-sequence-system).
    Since $\rk(S_*) < \rk(S)$, we can apply (\ref{tag_induction_hypo}).
    This gives a tuple $\uv_* \in \VV$ with
    $$
    (\mm, \theta) \models \varphi_\theta(\uv_*; (\theta^i(v_{\lii, 1}) : i \in \omega)) \wedge S_*(\uv_*).
    $$
    Unfolding the definitions of $\varphi(\uxvec{}_*; \xvec{}_{\lii, 1})$ and $S_*(\ux_*)$, we see that $\uv := v_{\lii, 1}\uv_* \in \VV$ realizes the formula $\psi_\theta(\ux) \wedge S(\ux)$ in $(\mm, \theta)$.
\end{innerproof}
\end{subclaim}

\begin{subclaim} \label{claim_dsfdsfdsfsf}
    If $\rk(S) = 0$, then $(\mm, \theta) \models \exists \ux \in \VV : \psi_\theta(\ux) \wedge S(\ux)$.
\begin{innerproof}
    We may assume $\deg(S) > 0$, since otherwise Claim \ref{case_base_case} applies.
    The assumption $\rk(S) = 0$ means that $\ux_\li$ is empty, so we simplify notation by writing $\ux = (x_1, \dots, x_n)$ and
    $$
    S(\ux) = \bigwedge\nolimits_{k=1}^n f_k^{q_k}[\theta](x_{k}) = u_k,
    $$
    where, for each $k$, one of the following holds:
    \begin{enumerate}[(i)]
        \item $f_k \in \Kp{C=\infty}$, $q_k > 0$, and $u_k \in \VV$.
        \item $f_k \in \Kp{0<C<\infty}$, $0 < q_k \leq C(f_k)$, and $u_k \in \Ker(f_k^{C-q_k})$.
    \end{enumerate}
    Set $\ux_* := (x_2, \dots, x_n)$, so $\ux = x_1\ux_*$, and set $f := f_1$, $q := q_1$, and $u := u_1$.
    Since $\psi(\xvec{}_1\uxvec{}_*)$ implies no finite disjunction of non-trivial linear dependencies in $\xvec{}_1\uxvec{}_*$ over $\VV$, we can find $\mm' \succ \mm$ and a realization $\uvvec{}' \in \VV'$ of $\psi(\uxvec)$ that is linearly independent over $\VV$.
    We define an $L(M)$-formula $\psi_*(\xvec{}_0\xvec{}_1\uxvec{}_*)$ so that $\psi_{*,\theta}(x_0x_1\ux_*)$ is $\psi_\theta(x_1\ux_*)$ but with every occurrence of $\theta^i(x_1)$ replaced with $\chi_i[\theta](x_0) + r_i[\theta](x_1)$, where $\chi_i, r_i$ are the unique polynomials satisfying $\chi_i \cdot f^{q-1} + r_i = X^i$ and $\deg(r_i) < \deg(f^{q-1})$.
    This means that we replace every occurrence of $x_1^i$ with $\chi_i[x_0] + r_i[x_1]$, where $\chi_i[x_0] := \sum_{j=0}^{\deg(\chi_i)} (\chi_i)_j \cdot x_0^j$, and similarly for $r_i[x_1]$.
    With this definition, the tuple
    $$
    \left(\sum\nolimits_{j=0}^{\deg(f^{q-1}) + i} (f^{q-1} \cdot X^i)_j \cdot v_1^{\prime}{\!}^j : i \in \omega\right){}^\frown \vvec{}'_1{}^\frown\uvvec{}'_*
    $$
    realizes $\psi_*(\xvec{}_0\xvec{}_1\uxvec{}_*)$.
    Since $\psi(\uxvec)$ is bounded by the $C$-sequence-system $S$, the variable $x_1^i$ does not appear in $\psi(\uxvec)$ for any $i \geq \deg(f^q)$.
    It follows that $x_0^i$ does not appear in $\psi_*(\xvec{}_0\xvec{}_1\uxvec{}_*)$ for $i \geq \deg(f)$, and $x_1^i$ does not appear in $\psi_*(\xvec{}_0\xvec{}_1\uxvec{}_*)$ for $i \geq \deg(f^{q-1})$.
    Moreover, the tuple
    $$
    \left(\sum\nolimits_{j=0}^{\deg(f^{q-1}) + i} (f^{q-1} \cdot X^i)_j \cdot v_1^{\prime}{\!}^j  : i \in \omega\right){}^\frown\big(v_1^{\prime}{\!}^0, \dots, v_1^{\prime}{\!}^{\deg(f^{q-1})-1}\big){}^\frown\uvvec{}'_*
    $$
    is linearly independent over $\VV$.
    Thus $\psi_*(\xvec{}_0\xvec{}_1\uxvec{}_*)$ implies no finite disjunction of non-trivial linear dependencies in $\xvec{}_0\xvec{}_1\uxvec{}_*$ over $\VV$.
    Using \aclCond{} as in Claim \ref{case_rk_greater_0}, we find some $\vvec{}'_0 \in \VV'$ that is linearly independent over $\VV$ and satisfies $\sigma_\varphi(\xvec{}_0)$ for the $L(M)$-formula
    $$
    \varphi(\xvec{}_1\uxvec{}_*; \xvec{}_0) := \psi_*(\xvec{}_0\xvec{}_1\uxvec{}_*).
    $$
    Since $x_0^i$ does not appear in $\psi_*(\xvec{}_0\xvec{}_1\uxvec{}_*)$ for $i \geq \deg(f)$, it also does not appear in $\sigma_\varphi(\xvec{}_0)$ for $i \geq \deg(f)$.
    Therefore, $\sigma_\varphi(\xvec{}_0)$ is bounded by the atomic $C$-sequence-system $f[\theta](x_0) = u$.
    We quickly verify that this is indeed an atomic $C$-sequence-system:
    In case (i) for $k = 1$, we have $f \in \Kp{C=\infty}$, and in case (ii), we have $f \in \Kp{0<C<\infty}$ and $u \in \Ker(f^{C-1})$.
    Since (\ref{tag_induction_hypo}) also holds for atomic $C$-sequence-systems, we find some $v_0 \in \VV$ with
    $$
    (\mm, \theta) \models \sigma_{\varphi}((\theta^i(v_0) : i \in \omega)) \wedge f[\theta](v_0) = u.
    $$
    By the definition of $\sigma_\varphi(\xvec{}_0)$, the formula $\varphi(\xvec{}_1\uxvec{}_*; (\theta^i(v_0) : i \in \omega))$ implies no finite disjunction of non-trivial linear dependencies in $\xvec{}_1\uxvec{}_*$ over $\VV$.
    It is bounded by the $C$-sequence-system
    $$
    S_*([x_1]\ux_*) := \big[ f^{q-1}[\theta](x_1) = v_0\; \! \wedge\!\big] \bigwedge\nolimits_{k=2}^{n} f^{q_k}_k[\theta](x_k) = u_k,
    $$
    where the bracketed part is omitted if $q = 1$.
    Note that when $q = 1$, $\varphi(\xvec{}_1\uxvec{}_*;(\theta^i(v_0) : i \in \omega))$ is a formula only in $\uxvec{}_*$, since $x^i_1$ appears only for $i < \deg(f^{q-1})$.
    For readability, we write the rest of the argument as if $q > 1$.
    The formula $S_*(x_1\ux_*)$ is indeed a $C$-sequence-system.
    In case (i) for $k = 1$, this is clear.
    In case (ii), we have $v_0 \in \Ker(f^{C-q+1})$, since $u \in \Ker(f^{C-q})$ and $f[\theta](v_0) = u$.
    By construction, $\deg(S_*) < \deg(S)$, since $\deg(S) := \sum_{k=1}^n \deg(f_k^{q_k})$ by definition.
    Hence we can apply (\ref{tag_induction_hypo}) and find some $v_1\uv_* \in \VV$ with
    $$
    (\mm, \theta) \models \varphi_\theta(v_1\uv_*; (\theta^i(v_0) : i \in \omega)) \wedge S_*(v_1\uv_*).
    $$
    By definition, $\varphi_\theta(v_1\uv_*; (\theta^i(v_0) : i \in \omega))$ is just $\psi_{*,\theta}(v_0v_1\uv_*)$, which is obtained by replacing every occurrence of $\theta^i(x_1)$ in $\psi_\theta(x_1\uv_*)$ with
    $$
    \chi_i[\theta](v_0) + r_i[\theta](v_1) = \chi_i[\theta](f^{q-1}[\theta](v_1)) + r_i[\theta](v_1) = \theta^i(v_1).
    $$
    Therefore, $\varphi_\theta(v_1\uv_*; (\theta^i(v_0) : i \in \omega))$ is just $\psi_\theta(v_1\uv_*)$.
    Together with $f[\theta](v_0) = u$, the equation $f^{q-1}[\theta](v_1) = v_0$ implies $f^q[\theta](v_1) = u$.
    We conclude that $(\mm, \theta) \models \psi_\theta(v_1\uv_*) \wedge S(v_1\uv_*)$.
\end{innerproof}
\end{subclaim}
\noindent This completes the proof of Theorem \ref{theorem_easier_char_acl_cond}.
In the non-trivial cases, we used \aclCond{} to split $S(\ux)$ into an atomic $C$-sequence-system and a $C$-sequence-system with smaller $(\rk, \deg)$.
\end{proof}
\end{theorem}

\section{The case $T = \TKvs$}

We now study the simplest case of our construction, namely the case in which $T$ is the theory $\TKvs$ of $K$-vector spaces.
By Remark 3.4 in \cite{Chi25}, if $(\mm, \theta)$ is an existentially closed model of $T^C_\theta$, then $(\VV, 0, +, (\lambda \cdot)_{\lambda\in K}, \theta)$ is an existentially closed model of $\TKvsTheC$.
Thus, studying this case gives a ``lower bound'' for how complicated the theory $T\theta^C$ can be for general theories $T$.

\subsection{Characterization using $C$-completeness} \label{sec_c_completeness}
We now give a final characterization of existentially closed models of $\TKvsTheC$.
We show that it is equivalent to the following stronger version of $C$-image-completeness:

\begin{definition} \label{def_c_complete_new}
    Let $\theta$ be an endomorphism of $\VV$.
    We call $\theta$ \textbf{$\mathbf{C}$-complete} if all of the following hold:
    \begin{enumerate}[(i)]
        \item $\theta$ is $C$-image-complete, that is, $\theta$ is a $C$-endomorphism, as in Definition \ref{def_T_C_theta}, that additionally satisfies $\Image(f^C) = \Image(f^{C+1})$ for all $f \in \Kp{C<\infty}$.
        \item If $f \in \Kp{0<C<\infty}$, then, for every $u \in \Ker(f^{C-1})$, the set $\set{x \in \VV : f[\theta](x) = u}$ is infinite.
        \item If $f \in \Kp{C=\infty}$, then, for every $u \in \VV$, the set $\set{x \in \VV : f[\theta](x) = u}$ is infinite.
        \item If $C \in \Cctrans$, then the type $\set{\text{``$(\theta^i(x) : i \in \omega)$ is linearly independent over $\VV$"}}$ is finitely satisfiable in $(\VV, \theta)$.
    \end{enumerate}
    We call a model $(\mm, \theta) \models T^C_\theta$ \textbf{$\mathbf{C}$-complete} if its endomorphism $\theta$ is $C$-complete.
\end{definition}

\begin{remark}
    If $C$ is algebraic, it is enough to verify $\Ker(\mipo(C)) = \VV$ and (ii).
    \begin{proof}
        For $C \in \Ccalg$, any $C$-endomorphism is automatically $C$-image-complete (see (i) of Fact \ref{fact_c_image_comple}), the set $\Kp{C=\infty}$ in (iii) is empty, and (iv) holds trivially since $C \not\in \Cctrans$.
    \end{proof}
\end{remark}

\begin{theorem} \label{corollary_c_comp}
    A model $(\VV, \theta) \models \TKvsTheC$ is existentially closed if and only if it is $C$-complete.
\begin{proof}
    We first prove ``$\Rightarrow$''.
    Let $(\VV, \theta)$ be an existentially closed model of $\TKvsTheC$.
    We verify all conditions of $C$-completeness:
    \begin{enumerate}[(i)]
        \item The $C$-image-completeness of $\theta$ follows directly from (ii) of Fact \ref{fact_c_image_comple}.
        \item Let $f \in \Kp{}$ with $0 < C(f) < \infty$ and $u \in \Ker(f^{C-1})$ be given.
        For every finite $U \subseteq \VV$, the $L(\VV)$-formula $\psi_U(\xvec) := x^0 \not\in U$ does not imply any finite disjunction of non-trivial linear dependencies in $\xvec$ over $\VV$.
        Also note that $\psi_U(\xvec)$ is bounded by the $C$-sequence-system $f[\theta](x) = u$.
        By Theorem \ref{theorem_big_characterization}, we find $v \in \VV \setminus U$ with $f[\theta](v) = u$.
        If the set $\set{x \in \VV : f[\theta](x) = u}$ were finite, then taking $U$ to be this set would give a contradiction.
        \item The same argument shows that the set $\set{x \in \VV : f[\theta](x) = u}$ is infinite for every $u \in \VV$ and $f \in \Kp{C=\infty}$.
        \item Assume $C \in \Cctrans$ and define
        $$
        p(x) = \set{\text{``$(\theta^i(x) : i \in \omega)$ is linearly independent over $\VV$"}}.
        $$
        We also define $q(\xvec) := \set{\text{``$\xvec$ is linearly independent over $\VV$"}}$.
        The type $q(\xvec)$ is closed under finite conjunctions, and it is easy to check that $p(x) = \set{\psi_\theta(x) : \psi(\xvec) \in q(\xvec)}$.
        Notice that any $\psi(\xvec) \in q(\xvec)$ implies no finite disjunction of non-trivial linear dependencies in $\xvec$ over $\VV$ and is bounded by the trivial $C$-sequence-system $S(x) = \top$.
        By Theorem \ref{theorem_big_characterization}, we can find a realization of $\psi_\theta(x)$ in $\VV$.
        Hence $p(x)$ is finitely satisfiable in $(\VV, \theta)$.
    \end{enumerate}
    We now prove ``$\Leftarrow$'', using Theorem \ref{theorem_easier_char_acl_cond}, our improved characterization for theories close to $\TKvs$.
    As noted in (i) of Example \ref{example_acl_cond}, $\TKvs$ satisfies \aclCond{}.
    Since $\TKvs$ is strongly minimal, it also eliminates $\exists^\infty x\!\in\!\VV$.
    Let $S(x)$ be an atomic $C$-sequence-system over $\VV$, and let $\psi(\xvec)$ be an $\LK(\VV)$-formula that is bounded by $S$ and does not imply any finite disjunction of non-trivial linear dependencies in $\xvec$ over $\VV$.
    We need to show that
    $$
    (\VV, \theta) \models \exists x \in \VV : \psi_\theta(x) \wedge S(x)
    $$
    holds.
    By quantifier elimination, $\psi(\xvec)$ is equivalent to a finite disjunction $\bigvee_k \psi_k(\xvec)$, where each $\psi_k(\xvec)$ is a finite conjunction of atomic and negated atomic $\LK(\VV)$-formulas in $\xvec$.
    Since $\psi(\xvec)$ does not imply any finite disjunction of non-trivial linear dependencies in $\xvec$ over $\VV$, there must be some $k$ such that $\psi_k(\xvec)$ is of the form
    $$
    \bigwedge\nolimits_{l=1}^n \sum\nolimits_{i=0}^{\deg(\rho_l)} (\rho_l)_i \cdot x^i \not \in U_l
    $$
    for some nonzero polynomials $\rho_l$ and finite sets $U_l \subseteq \VV$.
    Without loss of generality, we may assume $\psi(\xvec) = \psi_k(\xvec)$.
    It is easy to check that $\psi_\theta(x) = \bigwedge\nolimits_{l=1}^n \rho_l[\theta](x) \not \in U_l$.
    We now deal with each kind of atomic $C$-sequence-system separately; see Definition \ref{def_atomic_c_ss}.
    \begin{enumerate}[(i)]
        \item Assume $S(x) = \top$.
        In this case, $C$ must be transcendental, and $\psi_\theta(x)$ is a finite conjunction of formulas from the type in (iv) of Definition \ref{def_c_complete_new}.
        By the $C$-completeness of $\theta$, there is a realization $v \in \VV$ of $\psi_\theta(x)$.
        Since $S(x) = \top$, this $v$ realizes $\psi_\theta(x) \wedge S(x)$.
        \item Assume $S(x)$ is of the form $f[\theta](x) = u$ for some $f \in \Kp{C=\infty}$.
        Fix $l$ with $1 \leq l \leq n$.
        Since $\psi(\xvec)$ is bounded by $S$, we have $\deg(\rho_l) < \deg(f)$.
        Because $f \in \Kp{}$ is irreducible and $\rho_l$ is nonzero, we obtain $\gcd(\rho_l, f) = 1$.
        Bézout's identity for polynomials over $K$ gives $\chi_1, \chi_2 \in K[X]$ with $1 = \gcd(\rho_l, f) = \chi_1 \cdot \rho_l + \chi_2 \cdot f$.
        Let $u' \in U_l$ be given, and assume that $v \in \VV$ satisfies $\rho_l[\theta](v) = u'$ and $f[\theta](v) = u$.
        Then
        $$
        v = 1[\theta](v) = \gcd(\rho_l, f)[\theta](v) = \chi_1[\theta](\rho_l[\theta](v)) + \chi_2[\theta](f[\theta](v)) = \chi_1[\theta](u') + \chi_2[\theta](u),
        $$
        so
        $
        \set{x \in \VV : \rho_l[\theta](x) \in U_l \wedge f[\theta](x) = u} \subseteq \chi_1[\theta](U_l) + \chi_2[\theta](u)
        $
        is finite.
        Since this holds for every $l$ and the set $\set{x \in \VV : f[\theta](x) = u}$ is infinite by $C$-completeness, there is some
        $
        v \in \set{x \in \VV : \bigwedge\nolimits_{l=1}^n \rho_l[\theta](x) \not\in U_l \wedge f[\theta](x) = u}.
        $
        Such a $v$ realizes $\psi_\theta(x) \wedge S(x)$.
        \item Assume $S(x)$ is of the form $f[\theta](x) = u$ with $f \in \Kp{0<C<\infty}$ and $u \in \Ker(f^{C-1})$.
        The same argument as in (ii), using condition (ii) of Definition \ref{def_c_complete_new} in place of condition (iii), gives a realization of $\psi_\theta(x) \wedge S(x)$. \qedhere
    \end{enumerate}
\end{proof}
\end{theorem}

\noindent As recalled in the beginning of this section, for any existentially closed model $(\mm, \theta) \models T^C_\theta$, the definable structure $(\VV, \theta)$ is an existentially closed model of $\TKvsTheC$ and is therefore $C$-complete by Theorem \ref{corollary_c_comp}.
Also, the theory $\TKvsThe \cup \set{\text{``$\theta$ is $C$-complete"}} = \TKvs\theta^C$ is complete (see Example \ref{example_kvs}), which motivates the name $C$-completeness.

The $C$-completeness of existentially closed models of $T^C_\theta$ can be used to describe the $\LKThe$-definable sets, such as $\Ker(f^n)$, $\Image(F^C)$, and so on.
Here is an example:

\begin{lemma} \label{corollary_kernels_growing}
    If $(\mm, \theta) \models T^C_\theta$ is an existentially closed model, then $\Ker(f^m) \subsetneq \Ker(f^n)$ for any $f \in \Kp{0<C}$ and $m, n \in \NN$ with $0 \leq m < n \leq C(f)$.
\begin{proof}
    By Theorem \ref{corollary_c_comp}, we know that the endomorphism $\theta$ is $C$-complete.
    Using (ii) and (iii) of Definition \ref{def_c_complete_new}, we can find an element $v \in \VV$ with $f[\theta](v) = u$ for any $u \in \Ker(f^m)$ and any $m < C(f)$.
    This implies $\Ker(f^{m+1}) \supsetneq \Ker(f^m)$ for every $m < C(f)$.
    Therefore $\Ker(f^m) \subsetneq \Ker(f^n)$ whenever $0 \leq m < n \leq C(f)$.
\end{proof}
\end{lemma}

\subsection{Definable subgroups and homomorphisms}

In this section, we study the $\varnothing$-definable subgroups of $(\VV^n, 0, +)$ in $\TKvs\theta^C$ and the $\varnothing$-definable homomorphisms between them.
The author believes that most of the arguments presented in this section are well known, despite not having found them in the literature.
Let $(\VV, \theta) \models \TKvs\theta^C$ be given, and let $G \subseteq \VV^n$ be a $\varnothing$-definable subgroup of $(\VV^n, 0, +)$.

\begin{observation} \label{obser_ee_to_direkt_sum}
    The structure $\bigoplus\nolimits_{i=1}^q (\VV, \theta)$ is $C$-complete, and hence a model of $\TKvs\theta^C$.
    Notice that
    $$
    \iota_i \colon (\VV, \theta) \to \bigoplus\nolimits_{i=1}^q (\VV, \theta);\quad x \mapsto (0, \dots, x, \dots, 0)
    $$
    (where $x$ is in the $i$-th coordinate) is an embedding.
    Since $\TKvs\theta^C$ is model-complete, this embedding is elementary, and hence maps elements of $X$ to $X^{\bigoplus_{i=1}^q(\VV, \theta)}$ for any $\varnothing$-definable set $X$.
    In particular, this embedding maps elements of $G$ to $G^{\bigoplus_{i=1}^q(\VV, \theta)}$.
\end{observation}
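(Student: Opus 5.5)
We have to verify four things about the direct sum $\bigoplus_{i=1}^q (\VV, \theta)$, with $\theta$ acting coordinatewise: it is $C$-complete in the sense of Definition \ref{def_c_complete_new}, and therefore, by Theorem \ref{corollary_c_comp} together with the completeness of $\TKvs\theta^C$ (Example \ref{example_kvs}), a model of $\TKvs\theta^C$; each $\iota_i$ is an embedding; and by model-completeness each $\iota_i$ is elementary, so it preserves every $\varnothing$-definable set, $G$ in particular. The last two steps are formal. $\iota_i$ is clearly an injective $K$-linear map commuting with $\theta$, hence an $\LKThe$-embedding. An embedding between models of a model-complete theory is elementary, and elementary maps preserve $\varnothing$-definable sets. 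So the work is in checking $C$-completeness of $W := \VV^q$ with $\theta_W := \theta^{\oplus q}$.

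We use throughout that polynomial operators act coordinatewise: $\rho[\theta_W] = \rho[\theta]^{\oplus q}$. Hence $\Ker(\rho[\theta_W]) = \Ker(\rho[\theta])^q$ and $\Image(\rho[\theta_W]) = \Image(\rho[\theta])^q$.

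\begin{enumerate}[(i)]
\item \emph{$C$-image-completeness.} In the algebraic case, $\mipo(C)[\theta_W] = 0$. In the transcendental case, the equalities $\Ker(f^C) = \Ker(f^{C+1})$ and $\Image(f^C) = \Image(f^{C+1})$ pass to $q$-th powers.
\item \emph{Infinite fibres, conditions (ii) and (iii).} Take $u = (u_1,\dots,u_q)$ in $\Ker(f^{C-1}[\theta_W])$, resp.\ in $W$. Then each $u_j$ lies in $\Ker(f^{C-1}[\theta])$, resp.\ in $\VV$. The fibre of $f[\theta_W]$ over $u$ is the product of the fibres of $f[\theta]$ over the $u_j$. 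Each of these is infinite by $C$-completeness of $\theta$, so the product is infinite.
\item \emph{Condition (iv).} Suppose finitely many linear-independence conditions over $W$ are given. They involve only finitely many parameters $w_1,\dots,w_r \in W$ and finitely many iterates $\theta^0,\dots,\theta^N$.
\end{enumerate}

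Condition (iv) is the step where care is needed. We aim to find $v \in \VV$ such that $\theta^0(v),\dots,\theta^N(v)$ are linearly independent over the finite-dimensional subspace $U \subseteq \VV$ spanned by all coordinates of $w_1,\dots,w_r$. Such a $v$ exists by (iv) applied to $(\VV,\theta)$, since independence over $\VV$ implies independence over $U$; alternatively, finite satisfiability lets us take the parameters to be the elements of $U$. We then put $x := \iota_1(v)$.

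Suppose $\sum_i \lambda_i \theta_W^i(x) \in \spanA{w_1,\dots,w_r}{K}$. Projecting to the first coordinate gives $\sum_i \lambda_i \theta^i(v) \in U$, so all $\lambda_i = 0$. Hence $x$ realizes the given finite fragment of the type in $W$, and (iv) holds.

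With $C$-completeness of $(W, \theta_W)$ established, Theorem \ref{corollary_c_comp} shows that it is an existentially closed model of $\TKvsTheC$, that is, a model of $\TKvs\theta^C$. This completes the argument.
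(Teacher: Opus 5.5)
Your route is the intended one. The paper states this as an unproved observation: check $C$-completeness coordinatewise, apply Theorem \ref{corollary_c_comp}, then use model-completeness. Your treatment of conditions (i)--(iii) and of the embeddings $\iota_i$ is fine.

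The gap is in condition (iv). You assert that some $v \in \VV$ has $\theta^0(v),\dots,\theta^N(v)$ linearly independent over the finite-dimensional space $U$, and cite (iv) for $(\VV,\theta)$. Neither of your justifications works when $K$ is infinite.
\begin{itemize}
\item ``Independence over $\VV$ implies independence over $U$'' presupposes an element of $\VV$ realizing the type. No such element exists, since $\theta^0(v) = v \in \VV$. Condition (iv) only asserts \emph{finite satisfiability}.
\item Restricting the parameters to $U$ does not help either. The condition ``$\theta^0(x),\dots,\theta^N(x)$ independent over $U$'' is then the infinite set of inequations $\sum_{i\le N}\lambda_i\theta^i(x)\neq u$, with $\lambda\in K^{N+1}\setminus\{0\}$ and $u\in U$. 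Finite satisfiability only realizes finite subsets of it.
\end{itemize}
Such a $v$ does in fact exist in every $C$-complete transcendental model, but proving that requires a genuine structural argument, not just (iv). Only for finite $K$ is your claim an immediate instance of (iv).

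The repair is to ask for no more than the finite fragment you are given. A finite subset of the type over $W$ consists of inequations $\rho_s[\theta_W](x)\neq w_s$ for $s=1,\dots,r$, with $\rho_s\in K[X]\setminus\{0\}$ and $w_s\in W$. Let $\pi_1\colon W\to\VV$ be the first-coordinate projection. The inequations $\rho_s[\theta](y)\neq \pi_1(w_s)$ then form a finite subset of the type for $(\VV,\theta)$. By (iv) for $(\VV,\theta)$, some $v\in\VV$ satisfies all of them. Now $x:=\iota_1(v)$ works, since $\pi_1\big(\rho_s[\theta_W](x)-w_s\big)=\rho_s[\theta](v)-\pi_1(w_s)\neq 0$ for every $s$. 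With this change your argument is complete.
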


\noindent Let $\uv_1, \dots, \uv_q \in G$ be elements of the given group.
We may sometimes write $(\uv_1, \dots, \uv_q)$ for the element $\iota_1(\uv_1) + \cdots + \iota_q(\uv_q) \in G^{\bigoplus_{i=1}^q(\VV, \theta)}$.
The elementary equivalence of $(\VV, \theta)$ and $\bigoplus\nolimits_{i=1}^q (\VV, \theta)$ is a well-known concept in the model theory of modules (see $T = T^{\aleph_0}$ in \cite{Pre88}) and implies the following two lemmas.

\begin{lemma} \label{lemma_remove_ineq}
    Let $\psi_0(\ux) := \bigwedge_{k=1}^{m_0} \sum_{l=1}^n r_{0, k, l}(x_l) \neq 0$ and $\psi_1(\ux) := \bigwedge_{k=1}^{m_1} \sum_{l=1}^n r_{1, k, l}(x_l) = 0$ be given such that $\psi_0(\ux) \wedge \psi_1(\ux)$ is consistent.
    Suppose $\TKvs\theta^C \models \forall \ux : (\psi_0(\ux) \wedge \psi_1(\ux)) \rightarrow \ux \in G$.
    Then $\TKvs\theta^C \models \forall \ux : \psi_1(\ux) \rightarrow \ux \in G$.
\begin{proof}
    Let $\uv$ be a realization of $\psi_0(\ux) \wedge \psi_1(\ux)$, and let $\uu$ be any realization of $\psi_1(\ux)$.
    Work in the model $(\VV', \theta') := \bigoplus_{i=1}^2 (\VV, \theta) \models \TKvs\theta^C$, and let $G' := G^{(\VV', \theta')}$.
    The element $(\uv, \uu)$ lies in $G'$ because it satisfies $\psi_0(\ux) \wedge \psi_1(\ux)$: the conjunction of inequations $\psi_0(\ux)$ holds in the first coordinate, and the conjunction of equations $\psi_1(\ux)$ holds in both coordinates.
    Since $\uv$ satisfies $\psi_0(\ux) \wedge \psi_1(\ux)$, we have $\uv \in G$, and hence $(\uv, \uzero) = \iota_1(\uv) \in G'$, since $\iota_1$ is elementary.
    Thus $\iota_2(\uu) = (\uzero, \uu) = (\uv, \uu) - (\uv, \uzero)$ lies in $G'$.
    Since $\iota_2$ is elementary, we obtain $\uu \in G$.
\end{proof}
\end{lemma}

\begin{lemma} \label{lemma_stack_inequal}
    Suppose there are elements $\uv_1, \dots, \uv_q \!\in\! G$ that individually satisfy some $\LRC$-inequations, i.e., we have
    $
    (\VV, \theta) \models \sum\nolimits_{l=1}^n r_{i, l}(v_{i,l}) \neq 0
    $
    for each $i \in \set{1, \dots, q}$.
    Then there is an element $\uv \in G$ that satisfies all these inequations at once, i.e.,
    $$
    (\VV, \theta) \models \bigwedge\nolimits_{i=1}^q \sum\nolimits_{l=1}^n r_{i, l}(v_l) \neq 0.
    $$
\begin{proof}
    The element $(\uv_1, \dots, \uv_q) = \sum_{i=1}^q \iota_i(\uv_i) \in G^{\bigoplus_{i=1}^q(\VV, \theta)}$ satisfies all these inequations.
    Since the structures $\bigoplus_{i=1}^q(\VV, \theta)$ and $(\VV, \theta)$ are elementarily equivalent, we conclude.
\end{proof}
\end{lemma}

\begin{theorem} \label{theorem_grp_fml}
    The group $G$ is defined by a formula of the form $\bigwedge_{k=1}^m \sum_{l=1}^n r_{k, l}(x_l) = 0$, where the coefficients $r_{k, l}$ are in $R_C$.
\begin{proof}
    By quantifier elimination (see Example \ref{example_kvs}), the group $G$ is defined by a formula of the form
    $$
    \bigvee\nolimits_{i=1}^q \bigwedge\nolimits_{k=1}^{m_i} \big(\sum\nolimits_{l=1}^{n} r_{i, k, l}(x_l) = 0\big)^{\epsilon_{i, k}},
    $$
    where each $\epsilon_{i, k} \in \set{0,1}$, and we set $\varphi(\ux)^0 := \neg \varphi(\ux)$ and $\varphi(\ux)^1 := \varphi(\ux)$ for any formula $\varphi(\ux)$.
    Let $\phi_i(\ux) := \bigwedge\nolimits_{k=1}^{m_i} \big(\sum\nolimits_{l=1}^{n} r_{i, k, l}(x_l) = 0\big)^{\epsilon_{i, k}}$.
    For each $i \in \set{1, \dots, q}$, define
    $$
    \phi^*_i(\ux) := \bigwedge\nolimits_{k \in \kk_i} \sum\nolimits_{l=1}^{n} r_{i, k, l}(x_l) = 0,
    $$
    where $\kk_i \subseteq \set{k \in \set{1, \dots, m_i} : \epsilon_{i, k} = 1}$ is maximal such that, for each $j$, the formula $\phi_j(\ux)$ implies $\bigwedge\nolimits_{k \in \kk_i} \sum\nolimits_{l=1}^{n} r_{i, k, l}(x_l) = 0$ modulo $\TKvs\theta^C$.
    Define $\phi(\ux) := \bigwedge_{i=1}^q \phi^*_i(\ux)$.
    By definition, $\phi(\ux)$ is implied by every $\phi_i(\ux)$, so each $\phi_i(\ux)$ is equivalent to a formula of the form
    $$
    \zeta_i(\ux) := \phi(\ux) \wedge \Big(\bigwedge\nolimits_{k=1}^{s_i} \sum\nolimits_{l=1}^n r'_{i, k, l}(x_l) = 0\Big) \wedge \Big( \bigwedge\nolimits_{k=1}^{t_i} \sum\nolimits_{l=1}^n r''_{i, k, l}(x_l) \neq 0 \Big),
    $$
    where each equation $\sum\nolimits_{l=1}^n r'_{i, k, l}(x_l) = 0$ is not implied by some $\phi_j(\ux)$.
    To see this, note that $\phi_i(\ux) \equiv \phi(\ux) \wedge \phi_i(\ux)$, then sort all conjuncts of $\phi_i(\ux)$ by $\epsilon_{i, k}$, and finally remove all equations that are already implied by $\phi(\ux)$.
    By Lemma \ref{lemma_stack_inequal}, there is an element $\uv \in G$ that satisfies $\sum_{l=1}^n r'_{i, k, l}(v_l) \neq 0$ for all $i \in \set{1, \dots, q}$ and $k \in \set{1, \dots, s_i}$.
    Since $(\VV, \theta) \models \zeta_i(\uv)$ for some $i$, this $i$ must satisfy $s_i = 0$.
    Now apply Lemma \ref{lemma_remove_ineq} with $\psi_0(\ux) := \bigwedge\nolimits_{k=1}^{t_i} \sum\nolimits_{l=1}^n r''_{i, k, l}(x_l) \neq 0$ and $\psi_1(\ux) := \phi(\ux)$.
    Since $s_i = 0$, we have $\psi_0(\ux) \wedge \psi_1(\ux) = \zeta_i(\ux)$, and therefore any realization of $\phi(\ux)$ is already in $G$.
    By the definition of $\phi(\ux)$, any element of $G$ realizes $\phi(\ux)$.
    We conclude that the conjunction of $\LRC$-equations $\phi(\ux)$ defines $G$.
\end{proof}
\end{theorem}

\noindent Notice that Theorem \ref{theorem_grp_fml} is wrong if one replaces the assumption that $G$ is $\varnothing$-definable with the assumption that $G$ is definable with parameters.
For example, assume that $K$ has characteristic $p > 0$; take any $\varnothing$-definable subgroup $H \subsetneq \VV^n$, any $\uv \in \VV^n \setminus H$, and define $G' := \bigcup_{k = 0}^{p-1} H + k \cdot \uv$.
The group $G'$ will, in general, not be definable by a formula of the form $\bigwedge_{k=1}^m \sum_{l=1}^n r_{k, l}(x_l) = 0$.

\begin{corollary}
    The group $G$ is actually an $R_C$-submodule of $(\VV^n, 0, +, (r \cdot)_{r\in R_C})$.
\end{corollary}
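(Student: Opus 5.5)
The corollary follows directly from Theorem \ref{theorem_grp_fml}, with no further model-theoretic input. By that theorem, $G$ is defined by a conjunction
$$
\phi(\ux) = \bigwedge\nolimits_{k=1}^m \sum\nolimits_{l=1}^n r_{k, l}(x_l) = 0
$$
with all $r_{k,l} \in R_C$. So I will identify $G$ with the set of solutions in $\VV^n$ of a homogeneous system of $R_C$-linear equations. I will then check that this solution set is closed under the $R_C$-module operations on $\VV^n$, where $r \in R_C$ acts coordinatewise by $r \cdot (v_1, \dots, v_n) := (r(v_1), \dots, r(v_n))$.

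Three closure properties are needed.

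\textbf{Zero.} $\uzero \in G$, since every $r_{k,l}$ is an endomorphism of $\VV$ and so $r_{k,l}(0) = 0$.

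\textbf{Sums.} Closure under $+$ holds because each $r_{k,l}$ is additive. Hence
$$
\sum\nolimits_l r_{k,l}(v_l + w_l) = \sum\nolimits_l r_{k,l}(v_l) + \sum\nolimits_l r_{k,l}(w_l) = 0.
$$

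\textbf{Scalars.} Take $r \in R_C$ and $\uv \in G$. Fact \ref{theorem_r_c_def} says $R_C$ is commutative under $\circ$, so $r_{k,l} \circ r = r \circ r_{k,l}$. This gives
$$
\sum\nolimits_l r_{k,l}(r(v_l)) = r\Big(\sum\nolimits_l r_{k,l}(v_l)\Big) = r(0) = 0,
$$
and hence $r \cdot \uv \in G$.

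The only point that needs care is that the identities in $R_C$ hold in the given model $(\VV, \theta)$. Elements of $R_C$ are classes of formulas modulo $\TKvsThe \cup \set{\text{``$\theta$ is $C$-image-complete''}}$, and $(\VV,\theta) \models \TKvs\theta^C$ is $C$-image-complete, so commutativity and additivity hold pointwise in $(\VV,\theta)$. With that observed, I expect no real obstacle; the substantive work was done in Theorem \ref{theorem_grp_fml}.
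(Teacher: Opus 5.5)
Your proposal is correct and is exactly the intended argument. The paper states this corollary without proof, as an immediate consequence of Theorem \ref{theorem_grp_fml}. Closure under the $R_C$-action follows from the commutativity of $R_C$ (Fact \ref{theorem_r_c_def}), which you correctly identify as the one point needing justification.
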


\noindent In \cite{Chi25}, we claimed that any $\LKThe$-definable endomorphism of $\VV$ is an element of $R_C$.
We can now prove this:

\begin{theorem}
    \label{theorem_homomorphism_LKthe} Let $h \colon G \to H$ be a $\varnothing$-definable homomorphism between two $\varnothing$-definable subgroups $G \leq (\VV^m, 0, +)$ and $H \leq (\VV^n, 0, +)$.
    Then $h$ is given by a matrix of elements in $R_C$, i.e.,
    $$
    h(\uv) = \uv \cdot \begin{tikzpicture}[baseline=(Frame.base)]
    \drawText{0}{0}{r_{1,1}}
    \drawText{0}{2}{r_{m,1}}
    \drawText{2}{0}{r_{1,n}}
    \drawText{2}{2}{r_{m,n}}
    \drawHDots{1}{0}{1}
    \drawHDots{1}{2}{1}
    \drawVDots{0}{1}{1}
    \drawVDots{2}{1}{1}
    \drawBorder{0}{0}{3}{3}
    \end{tikzpicture}
    $$
    for any $\uv \in G$, where we treat elements of $\VV^n$ and $\VV^m$ as $1 \times n$ and $1 \times m$ matrices.
    In particular, any $\LKThe$-definable endomorphism of $\VV$ in $T\theta^C$ is already in $R_C$.
\begin{proof}
    Let $\pi_k \colon \VV^n \to \VV ; (x_1, \dots, x_n) \mapsto x_k$ be a coordinate projection.
    Clearly, $\pi_k \circ h \colon G \to \VV$ is also a group homomorphism, and $h$ is completely determined by $\pi_1 \circ h, \dots, \pi_n \circ h$.
    Thus we may assume $n = 1$.
    Since $\dcl_{\LKThe} = \cl_\theta$ (see Example \ref{example_kvs}), for any $\uv \in G$ there are elements $r_{\uv, 1}, \dots, r_{\uv, m} \in R_C$ such that $h(\uv) = \sum\nolimits_{k=1}^m r_{\uv, k}(v_k)$.
    By compactness, there are $q \in \NN$ and $r_{i, k} \in R_C$ for all $i \in \set{1, \dots, q}$ and $k \in \set{1, \dots, m}$ such that
    \begin{align}
        \TKvs\theta^C \models \forall \ux \in G : \bigvee\nolimits_{i=1}^q h(\ux) = \sum\nolimits_{k=1}^m r_{i, k}(x_k). \label{tag_disjunction_endo}
    \end{align}
    Now assume, toward a contradiction, that for every $i \in \set{1, \dots, q}$ there is some $\uv_i \in G$ with $h(\uv_i) \neq \sum\nolimits_{k=1}^m r_{i, k}(v_{i, k})$.
    Since the graph of $h$ is a subgroup of $(\VV^{m+1}, 0, +)$, Lemma \ref{lemma_stack_inequal} gives an element $\uv' \in G$ with $h(\uv') \neq \sum\nolimits_{k=1}^m r_{i, k}(v'_{k})$ for all $i$.
    This contradicts (\ref{tag_disjunction_endo}).
    Therefore there is some $i$ such that $h(\uv) = \sum\nolimits_{k=1}^m r_{i, k}(v_{k})$ holds for all $\uv \in G$.
\end{proof}
\end{theorem}

\noindent Notice that Theorem \ref{theorem_homomorphism_LKthe} can fail for reasons similar to those for Theorem \ref{theorem_grp_fml} if one allows $G$, $H$, and $h$ to be definable with parameters. However, in the case where $G = \VV^m$ and $H = \VV^n$, one can actually show that the above also holds if $h$ is definable with parameters.

Theorem \ref{theorem_homomorphism_LKthe} with $G = H = \VV$ implies that, for general theories $T$, the $\LKThe$-definable endomorphisms of $(\VV, 0, +, (\lambda\cdot)_{\lambda \in K})$ are precisly the elements of $R_C$.
This does not necessarily mean that all endomorphisms of $(\VV, 0, +, (\lambda\cdot)_{\lambda \in K})$ $L_\theta$-definable in $T\theta^C$ are also in $R_C$.
In fact, if $T$ defines any endomorphism $h$ of $(\VV, 0, +, (\lambda\cdot)_{\lambda \in K})$ that is not of the form $\lambda \cdot$ for some $\lambda \in K$ (see, e.g., (iii) of Example \ref{example_acl_cond}), $h$ is already a counterexample. In this case, any map that is $\set{+, \circ}$-generated by $h$ and $R_C$ is obviously also an $L_\theta$-definable endomorphism of $(\VV, 0, +, (\lambda\cdot)_{\lambda \in K})$. This leads to the following question:

\begin{question} \label{question_def_end}
    Suppose that $T$ satisfies \Hfour{}.
    Is it true that the $L_\theta$-definable endomorphisms of $(\VV, 0, +, (\lambda\cdot)_{\lambda \in K})$ are exactly those generated under $\set{+, \circ}$ by $R_C$ and all $L$-definable endomorphisms of $(\VV, 0, +, (\lambda\cdot)_{\lambda \in K})$?
\end{question}

\noindent The author believes that the answer to Question \ref{question_def_end} is positive.
Notice that if there are no $L$-definable endomorphisms apart from multiplication by elements of $K$, e.g., as in $\RCF{}$ with $(\VV, 0, +, (q\cdot)) = (\rr_{>0}, 1, \cdot, (x \mapsto x^q)_{q \in \QQ})$, then the question above asks whether $R_C$ consists of all $L_\theta$-definable endomorphisms.
The same question for $\ACF{}$ (with $C = C_\infty$) has already been asked in \cite{dEl25} (see Question 5.8 there).

\begin{remark} \label{rem_neo_stab_properties}
    Fix any kernel configuration $C$. Since $\TKvs\theta^C$ is a complete theory of $R_C$-modules (see Example \ref{example_kvs}), this theory is stable (see, e.g., (1) of Theorem 2.1 in \cite{Zie84}). Example \ref{example_kvs} also mentions that $\TKvs\theta^C$ is strongly minimal if $R_C$ is a field. It is easy to check that $\TKvs\theta^C$ cannot be strongly minimal otherwise, since $\Ker(f)$ will be neither finite nor cofinite for some $f \in \Kp{}$. It is also easy to check that there is an infinite descending chain of subgroups/submodules of $\VV$ if and only if $\Kp{0<C<\infty}$ is infinite (take a sequence $(f_i : i \in \omega)$ in $\Kp{0<C<\infty}$ and define $G_q$ with the formula $\bigwedge_{i=0}^{q-1} \pi_{\Ker(f_i^C)}(x) = 0$). Combining everything above with, e.g., Theorem 3.1 in \cite{Pre88}, one can obtain the following:
    \begin{enumerate}[(i)]
        \item $\TKvs\theta^C$ is strongly minimal if and only if $R_C$ is a field.
        \item $\TKvs\theta^C$ is totally transcendental and superstable if $\Kp{0<C<\infty}$ is finite.
        \item $\TKvs\theta^C$ is strictly stable if $\Kp{0<C<\infty}$ is infinite.
    \end{enumerate}
    An interesting observation is that the tameness of $\TKvs\theta^C$ seems to correspond somewhat to the notational complexity introduced when working with the given kernel configuration $C$.
\end{remark}

\section{Replacing $\theta$ with a predicate for $\Ker(\rho)$ or $\Image(\rho)$}

In this section, we axiomatize some reducts of an existentially closed model $(\mm, \theta) \models T^C_\theta$. 
Recall that, when $\theta$ is clear from the context, we write $\Ker(\rho)$ instead of $\Ker(\rho[\theta])$, and similarly for $\Image(\rho)$.
Given a model $(\mm, \theta) \models T^C_\theta$ and some $\rho \in K[X]$, the structures $(\mm, \Ker(\rho))$ and $(\mm, \Image(\rho))$ are reducts of $(\mm, \theta)$, where $\Ker(\rho)$ and $\Image(\rho)$ are predicates for the respective sets.
If $(\mm, \theta) \models T^C_\theta$ is existentially closed, we will show that these two reducts are either trivial, i.e., interdefinable with $\mm$, or expansions of $\mm$ by generic vector subspaces, as defined in Section \ref{sec_gen_subvector}.

\subsection{Generic expansions by a subvector space} \label{sec_gen_subvector}

Before studying the reducts obtained by removing $\theta$ and adding a predicate for $\Ker(\rho)$ or $\Image(\rho)$, we first study expansions by a predicate for a generic vector subspace.
Everything in this subsection is from Section 1 of \cite{dEl21b}, but specialized to the case where $T_0$ is the theory of $K$-vector spaces and generalized to the case where $T_0$ is not a reduct of $T$, but lives in an arbitrary definable subset of $\mm^n$. Since all proofs generalize easily, we omit them.

\begin{definition}
    Let $V$ be a predicate with the same arity as $\VV$ and set $L_V := L \cup \set{V}$.
    We define the $L_V$-theory
    $$
    T_V := T \cup \set{\text{``$V$ defines a vector subspace of $\VV$"}}.
    $$
\end{definition}

\noindent It is easy to see that $T_V$ is inductive, so the model companion of $T_V$ is exactly the first-order axiomatization of existentially closed models of $T_V$, if such an axiomatization exists (and otherwise no model companion exists). Throughout this section and Section \ref{sec_kernel_image}, we may write $\ulambda{} \cdot \uv := \sum\nolimits_{l=1}^n \lambda_l \cdot v_l$ for tuples $\ulambda{} = (\lambda_1, \dots, \lambda_n) \in K^n$ and $\uv = (v_1, \dots, v_n) \in \VV^n$.

\begin{theorem} \label{theorem_t_v_exist_closed}
    A model $(\mm, V) \models T_V$ is existentially closed if and only if for any $L(M)$-formula $\psi(\ux_0\ux_1)$ that implies no finite disjunction of non-trivial linear dependencies in $\ux_0\ux_1$ over $\VV$, any tuples $\ulambda{}_{1}, \dots, \ulambda{}_{m} \in K^{|\ux_1|} \setminus \set{\uzero}$, and any $u_1, \dots, u_m \in \VV$, we have
    $$
    (\mm, V) \models \exists \ux_0\ux_1 \in \VV : \psi(\ux_0\ux_1) \wedge \ux_0 \in V \wedge \bigwedge\nolimits_{k=1}^m \ulambda{}_{k} \cdot \ux_1 + u_k \not\in V.
    $$
\end{theorem}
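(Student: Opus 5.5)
We prove both directions separately, following the template of d'Elbée's generic expansion by a reduct (Section 1 of \cite{dEl21b}), adapted to $\VV$ being a definable set rather than the whole structure.

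\emph{Left to right.} Let $(\mm, V)$ be existentially closed. Fix $\psi(\ux_0\ux_1)$, nonzero $\ulambda{}_1, \dots, \ulambda{}_m$ and $u_1, \dots, u_m$ as in the statement.

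1. Since $\psi$ implies no finite disjunction of non-trivial linear dependencies, choose $\mm' \succ \mm$ and $\uv_0\uv_1 \in \VV'$ realizing $\psi$ with $\uv_0\uv_1$ linearly independent over $\VV$.
2. Put $V' := \spanA{V \cup \uv_0}{K}$. Then $(\mm', V') \models T_V$, and it extends $(\mm, V)$ because $V' \cap \VV = V$, by independence of $\uv_0$ over $\VV$.
3. Check that $\ulambda{}_k \cdot \uv_1 + u_k \notin V'$. Otherwise $\ulambda{}_k \cdot \uv_1 + u_k = \umu \cdot \uv_0 + w$ with $w \in V$, which is a non-trivial linear dependence of $\uv_0\uv_1$ over $\VV$, since $\ulambda{}_k \neq \uzero$.
4. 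The existential sentence now holds in $(\mm', V')$, so it holds in $(\mm, V)$ by existential closedness.

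\emph{Right to left.} Let $(\mm, V)$ satisfy the condition, and let $(\mm', V') \supseteq (\mm, V)$ be a model of $T_V$ satisfying a quantifier-free $L_V(M)$-formula. Since $T$ is model-complete, we may assume $\mm \prec \mm'$ and reduce to $L_V(M)$-formulas of the form $\exists \uy\, (\chi(\uy) \wedge \bigwedge_j t_j(\uy) \in V \wedge \bigwedge_k s_k(\uy) \notin V)$ with $\chi$ an $L$-formula. Let $\ua \in \VV'$ be the witness; the non-$\VV$ coordinates of the witness are absorbed into $\chi$ as existentially quantified variables.

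1. \textbf{Linear algebra.} Choose a basis of $\spanA{\ua}{K}$ modulo $\VV$. Split it as $\uv_0\uv_1$, where $\uv_0$ spans $(\spanA{\ua \cup \VV}{K} \cap V' + \VV)/\VV$, and then express every coordinate of $\ua$ as a $K$-combination of $\uv_0\uv_1$ plus an element of $\VV$.
2. \textbf{Adjust $\uv_0$.} Since $V'$ is a subspace, each element of $\uv_0$ can be shifted by an element of $\VV$ so that $\uv_0 \subseteq V'$. This requires picking representatives $v + w$ with $w \in \VV$ and $v+w \in V'$.
3. \textbf{Translate the $V$-conditions.} For $z = \umu \cdot \uv_0 + \ulambda \cdot \uv_1 + u$:
   \begin{itemize}
   \item if $\ulambda \neq \uzero$, then $z \notin V'$ automatically, because otherwise some element of $\uv_1$ would lie in $V' + \VV$ modulo $\spanA{\uv_0}{K}$, contradicting the choice of $\uv_0$;
   \item if $\ulambda = \uzero$, then $z \in V'$ holds iff $u \in V' \cap \VV = V$, which is a condition on parameters already decided in $\mm$.
   \end{itemize}
   Hence every conjunct $t_j \in V$ and $s_k \notin V$ becomes either trivially true, trivially false (impossible, since $\ua$ witnesses it), or of the form $\ulambda{}_k \cdot \ux_1 + u_k \notin V$ with $\ulambda{}_k \neq \uzero$.
4. \textbf{Apply the hypothesis.} Let $\psi(\ux_0\ux_1)$ be $\chi$ with $\uy$ substituted by the affine expressions in $\ux_0\ux_1$, intersected with the type formula fixing the $\VV$-part. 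Since $\uv_0\uv_1$ realizes $\psi$ and is linearly independent over $\VV$, this $L(M)$-formula implies no disjunction of non-trivial dependencies. The hypothesis then gives a witness in $\mm$.

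I expect the main obstacle to be the bookkeeping in steps 1–2 of this direction. The point that needs care is choosing $\uv_0$ inside $V'$ while keeping $\uv_0\uv_1$ independent over $\VV$. The key identity is $V' \cap \VV = V$, which holds because $(\mm, V)$ is a substructure of $(\mm', V')$, and it is what makes the membership conditions in step 3 reduce either to parameters or to the generic $\notin V$ form.
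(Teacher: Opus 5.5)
Your argument is correct and follows the route the paper intends: the paper omits this proof and defers to Section 1 of d'Elb\'ee's generic expansion by a reduct. That proof consists of the extension by $\langle V \cup \uv_0\rangle_K$ in one direction and the basis-splitting bookkeeping around $V' \cap \VV = V$ in the other, as you do. Two routine points should be made explicit. In the left-to-right direction, the sentence you transfer is existential only modulo $T$, so you need model-completeness of $T$, because $\psi$ and the formulas defining $\VV$, $+$ and $\lambda\cdot$ are arbitrary $L$-formulas. In the right-to-left direction, first replace every argument of $V$ by a fresh existentially quantified variable (recording membership or non-membership in $\VV$ in $\chi$), because an arbitrary $L$-term evaluated at the witness need not lie in the $K$-span of the witness plus $\VV$.
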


\begin{corollary} \label{corollary_t_v_companion}
    If $T$ satisfies \Hfour{}, then the theory $T_V$ has a model companion, which we denote with $TV$.
    It is axiomatized by $T_V$ and, for every $L$-formula $\psi(\ux_0\ux_1; \uw)$, every $m \geq 0$, and every choice of tuples $\ulambda{}_1, \dots, \ulambda{}_m \in K^{|\ux_1|} \setminus \set{\uzero}$, the sentence
    $$
    \forall \uw : \forall \uy \in \VV : \Big(\sigma_\psi(\uw) \rightarrow \Big(\exists \ux_0\ux_1 \in \VV : \psi(\ux_0\ux_1; \uw) \wedge \ux_0 \in V \wedge \bigwedge\nolimits_{k=1}^m \ulambda{}_k \cdot \ux_1 + y_k \not\in V\Big)\Big).
    $$
    Here $\uy = (y_1, \dots, y_m)$, and $\sigma_\psi(\uw)$ is the formula obtained by applying \Hfour{} to $\psi(\ux_0\ux_1; \uw)$.
\end{corollary}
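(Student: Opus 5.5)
The plan is to reduce the corollary to Theorem \ref{theorem_t_v_exist_closed}. Since $T_V$ is inductive, it suffices to show two things: every model of the proposed axiom system $TV$ is an existentially closed model of $T_V$, and every existentially closed model of $T_V$ satisfies the axioms. Together these say the class of existentially closed models is elementary and axiomatized by $TV$, which is exactly the model companion.

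For the soundness direction, let $(\mm, V) \models T_V$ be existentially closed, and fix an $L$-formula $\psi(\ux_0\ux_1;\uw)$, tuples $\ulambda{}_1,\dots,\ulambda{}_m \in K^{|\ux_1|}\setminus\set{\uzero}$, and parameters $\ud \in M$, $\uu \in \VV$. Assume $\mm \models \sigma_\psi(\ud)$. By Definition \ref{def_hfour}, $\psi(\ux_0\ux_1;\ud)$ then implies no finite disjunction of non-trivial linear dependencies in $\ux_0\ux_1$ over $\VV$. This is precisely the hypothesis of the right-hand side of Theorem \ref{theorem_t_v_exist_closed}, applied to the $L(M)$-formula $\psi(\ux_0\ux_1;\ud)$, so the required witness exists and the axiom holds.

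For the converse, let $(\mm, V) \models TV$, and take an $L(M)$-formula $\psi'(\ux_0\ux_1)$ with no implied disjunction of non-trivial dependencies. Write $\psi'$ as $\psi(\ux_0\ux_1;\ud)$ for an $L$-formula $\psi$ and a tuple $\ud \in M$. By the defining property of $\sigma_\psi$ we get $\mm \models \sigma_\psi(\ud)$. Instantiating the axiom at $\uw := \ud$ and $\uy := (u_1,\dots,u_m)$ then yields the existential statement required by Theorem \ref{theorem_t_v_exist_closed}. Hence $(\mm,V)$ is existentially closed.

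The only delicate point is bookkeeping. The characterization quantifies over arbitrary $L(M)$-formulas, and the axioms use uniform $L$-formulas with parameters, so I must check that \Hfour{} turns the non-first-order condition ``implies no disjunction of dependencies'' into $\sigma_\psi$ uniformly in the parameters. This is exactly what Definition \ref{def_hfour} provides. All the real content lies in Theorem \ref{theorem_t_v_exist_closed}, whose proof is assumed.
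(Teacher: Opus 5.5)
Your proposal is correct and is essentially the intended argument. The paper omits this proof (deferring to d'Elbée), but the corollary follows from Theorem \ref{theorem_t_v_exist_closed} via \Hfour{} and the inductivity of $T_V$ exactly as you describe, just as Theorem \ref{theorem_first_oder} follows from Theorem \ref{theorem_big_characterization}.
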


\begin{lemma} \label{lemma_completions_TV}
Assume that $T$ satisfies the condition \Hfour{}.
Two models $(\mm_1, V_1), (\mm_2, V_2) \models TV$ are elementarily equivalent if and only if there is an $L_V$-isomorphism
$$
\iota \colon (\acl^{\mm_1}(\varnothing), V_1 \cap \acl^{\mm_1}(\varnothing)) \to (\acl^{\mm_2}(\varnothing), V_2 \cap \acl^{\mm_2}(\varnothing))
$$
that is also $L$-elementary with respect to $\mm_1$ and $\mm_2$.
\end{lemma}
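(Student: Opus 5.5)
The plan is to follow d'Elbée's proof of the completion criterion for generic expansions, via a back-and-forth argument between sufficiently saturated models.

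The left-to-right direction is routine. Suppose $(\mm_1, V_1) \equiv (\mm_2, V_2)$. Pass to a common elementary extension, or use that $\acl_L(\varnothing)$ is determined by the $L$-theory. The point is that every $a \in \acl^{\mm_1}(\varnothing)$ is isolated over $\varnothing$ by an algebraic $L$-formula $\phi_a$ together with its exact number of realizations. We then match the finitely many realizations in $\mm_1$ with those in $\mm_2$ so that the complete $L_V$-type is preserved. Since there are only finitely many realizations, elementary equivalence lets us transfer, for each finite tuple of algebraic elements, the information about which $K$-linear combinations lie in $V$. A compactness or König-type argument over the finite trees of algebraic elements then yields an $L$-elementary map $\iota$ respecting $V$.

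For the right-to-left direction, take $\aleph_1$-saturated (or $\kappa$-saturated of the same size) models. Consider the system $\mathcal{I}$ of partial maps $\iota\colon A_1 \to A_2$ such that:
\begin{enumerate}[(a)]
\item $A_j = \acl^{\mm_j}(A_j)$ is small;
\item $\iota$ is $L$-elementary;
\item $\iota(V_1 \cap A_1) = V_2 \cap A_2$ and $\iota$ respects the vector space structure, which is automatic from $L$-elementarity;
\item the \emph{independence condition} holds: $\VV_j \cap A_j$ and $V_j$ are ``independent'' in the sense that $V_j \cap \spanA{\VV_j \cap A_j}{K} = \spanA{V_j \cap A_j}{K}$. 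This is automatic, since $A_j$ is $\acl$-closed and hence $\VV_j \cap A_j$ is already a subspace.
\end{enumerate}
The hypothesis supplies a starting element of $\mathcal{I}$. It remains to show the extension property: for $\iota\colon A_1 \to A_2$ in $\mathcal{I}$ and $b \in M_1$, extend $\iota$ to $\acl(A_1 b)$.

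For the extension step, let $B_1 := \acl_L(A_1 b)$. By saturation and $L$-elementarity, choose an $L$-elementary extension $\iota'\colon B_1 \to B_2 \subseteq M_2$ of $\iota$ that realizes the correct $L$-type. This image is not yet compatible with $V$, so we need to choose the image more carefully. Write $W_1 := \VV_1 \cap B_1$ and pick a basis of $W_1$ over $\spanA{V_1 \cap B_1}{K} + (\VV_1 \cap A_1)$. The requirement is then a partial $L(A_2)$-type $p(\ux)$, namely the image of $\tp_L(B_1/A_1)$, together with conditions of the form
\[
  x_0 \in V, \qquad \ulambda \cdot \ux_1 + u \notin V \quad (u \in A_2 \cap \VV_2,\ \ulambda \neq \uzero),
\]
with the variables split so that $\ux_0$ corresponds to a basis of $V_1 \cap B_1$ modulo $A_1$ and $\ux_1$ to the remaining basis elements. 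By saturation it suffices to realize finitely many of these conditions at once, and this is exactly the axiom scheme of Corollary \ref{corollary_t_v_companion} applied in $(\mm_2, V_2)$. We take $\psi \in p$ and observe that $\sigma_\psi$ holds, because the tuple from $B_1$ is linearly independent over $\VV_1 \cap A_1$. Here we need to work over $A_2$ rather than $\VV_2$; to handle this, I would first extend to a model, i.e. assume $A_j$ are elementary submodels when convenient, or use the parameter version of \Hfour{} over $A_2$.

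The main obstacle is this linear-independence requirement: \Hfour{} is formulated over the whole of $\VV$, whereas here we need independence over the small set $\VV_2 \cap A_2$ and nonmembership relative to $V_2$ outside $A_2$. I would handle it by first performing the back-and-forth only between elementary substructures $\mm_1^0 \prec \mm_1$ containing $A_1$, obtained via downward Löwenheim–Skolem inside the saturated models. I would then check that the axioms of Corollary \ref{corollary_t_v_companion} with parameters from $\mm^0_2$ produce witnesses in $\mm_2$ that are linearly independent over $\VV^0_2$, with the prescribed $V$-memberships. Once the system $\mathcal{I}$ has the back-and-forth property, it is a nonempty family of partial $L_V$-isomorphisms with back-and-forth, so $(\mm_1, V_1) \equiv (\mm_2, V_2)$.
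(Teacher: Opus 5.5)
Your left-to-right direction is fine. The cleanest version embeds both structures $L_V$-elementarily into one model of their common theory, where $\acl_L(\varnothing)$ becomes a single set. Your system $\mathcal{I}$ of $V$-preserving $L$-elementary maps between $\acl_L$-closed sets is also the right one.

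The gap is the step ``$\sigma_\psi$ holds, because the tuple from $B_1$ is linearly independent over $\VV_1 \cap A_1$.'' This does not follow from \Hfour{}. By Definition~\ref{def_hfour}, $\sigma_\psi(\bar a)$ says that $\psi(\ux;\bar a)$ implies no finite disjunction $\bigvee_j \ulambda_j \cdot \ux = w_j$ with the $w_j$ ranging over \emph{all} of $\VV_1$. Equivalently, some realization in an elementary extension is independent over the vector space of the whole model. Your witness, by contrast, lies in $\VV_1$ and is independent only over the small subspace $\VV_1 \cap A_1$.

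You flag this yourself, but neither proposed remedy works. There is no ``parameter version of \Hfour{} over $A_2$'' without further argument. Passing to elementary submodels only relocates the problem. Steps from a model $\mm_1^0$ to a larger model are indeed fine, by (ii) of Definition~\ref{def_hfour} applied to $\mm_1^0 \prec \mm_1$. But the back-and-forth must start from the given map on $\acl_L(\varnothing)$, which is not a model. Extending it to any model $\mm_1^0 \supseteq \acl_L(\varnothing)$ requires realizing in $(\mm_2, V_2)$ a type whose witnesses are independent only over $\VV_1 \cap \acl_L(\varnothing)$, which is exactly the original obstacle.

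The missing ingredient is the following lemma, and it is where algebraic closedness of the domains is really used. If $A = \acl_L(A)$, $\bar a \subseteq A$, and $\bar c \in \VV$ realizes $\psi(\ux; \bar a)$ and is linearly independent over $\VV \cap A$, then $\sigma_\psi(\bar a)$ holds. To see this, note that for $\ulambda \neq \uzero$ the element $\ulambda \cdot \bar c \in \VV$ is not in $\VV \cap A$, hence not in $\acl_L(A)$. Now fix $\ulambda_1, \dots, \ulambda_N \neq \uzero$ and $w_1, \dots, w_N \in \VV$. Apply P.~M.~Neumann's lemma to the automorphisms over $A$ of a saturated, strongly homogeneous extension, acting on the (infinite) orbits of the entries $\ulambda_j \cdot \bar c$. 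This gives $\bar c' \equiv^L_A \bar c$ with $\ulambda_j \cdot \bar c' \notin \{w_1, \dots, w_N\}$ for all $j$. By compactness, $\tp_L(\bar c / A)$ then has a realization in an elementary extension that is linearly independent over $\VV$. Hence $\mm_1 \models \sigma_\psi(\bar a)$, and by $L$-elementarity of $\iota$, $\mm_2 \models \sigma_\psi(\iota(\bar a))$. With this lemma inserted, your extension step and the back-and-forth go through as written.

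The same Neumann-type input also gives a shorter amalgamation proof:
\begin{itemize}
\item Realize a copy $\mm_2'$ of $\mm_2$ over $E := \acl_L(\varnothing)$ with $\mm_1 \cap \mm_2' = E$ in a common model of $T$, so that $\VV_1 \cap \VV_2' = \VV \cap E$.
\item Then $V := V_1 + V_2'$ satisfies $V \cap \VV_1 = V_1$ and $V \cap \VV_2' = V_2'$.
\item Extending to an existentially closed model of $T_V$ and using model-completeness of $TV$ gives $(\mm_1, V_1) \equiv (\mm_2, V_2)$.
\end{itemize}
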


\subsection{Kernels and Images} \label{sec_kernel_image}

Assume that $(\mm, \theta) \models T_\theta^C$ is an existentially closed model.
Using the axiomatization of existentially closed models of $T_V$ from Section \ref{sec_gen_subvector} above, we can now axiomatize the reducts of the form $(\mm, \Ker(\rho))$.

\begin{theorem} \label{theorem_kernel_generic}
    If $\rho$ is a polynomial and $(\mm, \theta) \models T_\theta^C$ is an existentially closed model, then one of the following holds:
    \begin{enumerate}[(i)]
        \item $(\mm, \Ker(\rho)) = (\mm, \set{0})$;
        \item $(\mm, \Ker(\rho)) = (\mm, \VV)$;
        \item $(\mm, \Ker(\rho))$ is an existentially closed model of $T_V$.
    \end{enumerate}
\begin{proof}
    Case (i) occurs if and only if $\Fac(\rho) \subseteq \Kp{C=0}$ and $\rho \neq 0$.
    Case (ii) occurs if and only if $\rho = 0$ or if $C$ is algebraic and $\mipo(C) \mid \rho$.
    Assume that we are in neither of these cases, and let $\psi(\ux_0\ux_1)$ be an $L(M)$-formula that implies no finite disjunction of non-trivial linear dependencies in $\ux_0\ux_1$ over $\VV$.
    In order to prove that $(\mm, \Ker(\rho))$ is an existentially closed model of $T_V$, we need to show that
    $$
    (\mm, \theta) \models \exists \ux_0\ux_1 \in \VV : \psi(\ux_0\ux_1) \wedge \ux_0 \in \Ker(\rho) \wedge \bigwedge\nolimits_{k=1}^m \ulambda{}_k \cdot \ux_1 + u_k \not\in \Ker(\rho)
    $$
    holds for any $\ulambda{}_1, \dots, \ulambda{}_m \in K^{|\ux_1|} \setminus \set{\uzero}$ and $u_1, \dots, u_m \in \VV$.
    For this, we define an $L(M)$-formula $\psi'(\uxvec{}_0\uxvec{}_1)$ and another $\LKThe$-formula $S(\ux_0\ux_1)$ as follows:
    \begin{enumerate}[(a)]
        \item If $C$ is transcendental and there is some $f \in \Kp{0<C}$ with $f \mid \rho$, we set
        \begin{align*}
            &\psi'(\uxvec{}_0\uxvec{}_1) := \psi(\ux^0_0\ux^0_1) \wedge \bigwedge\nolimits_{k=1}^m \ulambda{}_k \cdot \rho[\ux_1] + \rho[\theta](u_k) \neq 0 \\
            &\hspace{200pt}\text{and} \quad S(\ux_0\ux_1) := \bigwedge\nolimits_{l=1}^{|\ux_0|} f[\theta](x_{0, l}) = 0.
        \end{align*}
        Here $\rho[\ux_1]$ is the $|\ux_1|$-tuple of $\LK$-terms consisting of the entries $\rho[x_{1, l}] := \sum\nolimits_{i=0}^{\deg(\rho)} (\rho)_i \cdot x_{1, l}^i$.
        \item If $C$ is algebraic and there are $f, g \in \Kp{0<C}$ with $f \mid \rho$ and $g \nmid \rho$, we let $r$ be the remainder of $\rho$ divided by $g$ and set
        \begin{align*}
            &\psi'(\uxvec{}_0\uxvec{}_1) := \psi(\ux^0_0\ux^0_1) \wedge \bigwedge\nolimits_{k=1}^m \ulambda{}_k \cdot r[\ux_1] + \rho[\theta](u_k) \neq 0 \\
            &\hspace{100pt}\text{and} \quad S(\ux_0\ux_1) := \bigwedge\nolimits_{l=1}^{|\ux_0|} f[\theta](x_{0, l}) = 0 \wedge \bigwedge\nolimits_{l=1}^{|\ux_1|} g[\theta](x_{1, l}) = 0.
        \end{align*}
        \item If $C$ is algebraic and there is some $f \in \Kp{0<C}$ with $f \mid \rho$ and $f^C \nmid \rho$, we set $g := f^C$, let $r$ be the remainder of $\rho$ divided by $f^C$, and define $\psi'(\uxvec{}_0\uxvec{}_1)$ and $S(\ux_0\ux_1)$ as in case (b).
    \end{enumerate}
    In all cases, it is easy to check that $S$ is a $C$-sequence-system over $(\VV, \theta)$ as in Definition \ref{def_c_sequence_system}, setting $\ux_\li := \ux_1$ in case (a), and that $S$ bounds $\psi'(\uxvec{}_0\uxvec{}_1)$.
    Since $\psi'(\uxvec{}_0\uxvec{}_1)$ is obtained by adding some linear inequations to $\psi(\ux^0_0\ux^0_1)$, we also see that this formula implies no finite disjunction of non-trivial linear dependencies in $\uxvec{}_0\uxvec{}_1$ over $\VV$.
    By Theorem \ref{theorem_big_characterization}, there is $\uv_0\uv_1 \in \VV$ with
    $$
    (\mm, \theta) \models \psi'_\theta(\uv_0\uv_1) \wedge S(\uv_0\uv_1).
    $$
    Notice that in cases (b) and (c), $\psi'_\theta(\uv_0\uv_1)$ implies $\bigwedge\nolimits_{k=1}^m \ulambda{}_k \cdot r[\theta](\uv_1) + \rho[\theta](u_k) \neq 0$, which, together with the $\bigwedge\nolimits_{l=1}^{|\ux_1|} g[\theta](v_{1, l}) = 0$ part of $S$ and the fact that $r$ is the remainder of $\rho$ divided by $g$, implies $\bigwedge\nolimits_{k=1}^m \rho[\theta]( \ulambda{}_k \cdot \uv_1 + u_k) \neq 0$.
    The same conclusion follows trivially in case (a).
    In all cases, we have $\uv_0 \in \Ker(\rho)$, since $f \mid \rho$ in any case.
    Hence
    $$
    (\mm, \theta) \models \psi(\uv_0\uv_1) \wedge \uv_0 \in \Ker(\rho) \wedge \bigwedge\nolimits_{k=1}^m \ulambda{}_k \cdot \uv_1 + u_k \not\in \Ker(\rho).
    $$
    This finishes the proof, since, under our assumption that we are not in case (i) or (ii), it is clear that we must be in at least one of cases (a), (b), and (c).
\end{proof}
\end{theorem}

\noindent We obtain exactly the same theorem for the images:

\begin{theorem} \label{theorem_image_gen}
    If $\rho$ is a polynomial and $(\mm, \theta) \models T_\theta^C$ is existentially closed, then one of the following holds:
    \begin{enumerate}[(i)]
        \item $(\mm, \Image(\rho)) = (\mm, \set{0})$;
        \item $(\mm, \Image(\rho)) = (\mm, \VV)$;
        \item $(\mm, \Image(\rho))$ is an existentially closed model of $T_V$.
    \end{enumerate}
\begin{proof}
    In the algebraic case, this follows directly since $\Image(\rho) = \Ker(\rho')$ for some $\rho' \in K[X]$.
    From now on, we assume that $C$ is transcendental.
    We are in case (i) if and only if $\rho = 0$, and we are in case (ii) if and only if $\Fac(\rho) \subseteq \Kp{C=0} \cup \Kp{C=\infty}$ and $\rho \neq 0$.
    This means that we can assume that there is $f \in \Kp{0<C<\infty}$ with $f \mid \rho$.
    As in the proof of Theorem \ref{theorem_kernel_generic} above, let $\psi(\ux_0\ux_1)$ be an $L(M)$-formula that implies no finite disjunction of non-trivial linear dependencies in $\ux_0\ux_1$ over $\VV$.
    For any $\ulambda{}_1, \dots, \ulambda{}_m \in K^{|\ux_1|} \setminus \set{\uzero}$ and $u_1, \dots, u_m \in \VV$, we need to show that
    $$
    (\mm, \theta) \models \exists \ux_0\ux_1 \in \VV : \psi(\ux_0\ux_1) \wedge \ux_0 \in \Image(\rho) \wedge \bigwedge\nolimits_{k=1}^m \ulambda{}_k \cdot \ux_1 + u_k \not\in \Image(\rho).
    $$
    For this, define
    $$
        \psi'(\uxvec{}_0\uxvec{}_1) := \psi\big(\rho[\ux_0]\ux^0_1\big) \wedge \bigwedge\nolimits_{k=1}^m \ulambda{}_k \cdot f^{C-1}[\ux_1] + f^{C-1}[\theta] \circ \pi_{\Ker(f^C)}(u_k) \neq 0
    $$
    and $S(\ux_0\ux_1) := \bigwedge\nolimits_{l=1}^{|\ux_1|} f^C[\theta](x_{1, l}) = 0$.
    Using Theorem \ref{theorem_big_characterization}, we can now find tuples $\uv_0\uv_1 \in \VV$ with $(\mm, \theta) \models \psi'_\theta(\uv_0\uv_1) \wedge S(\uv_0\uv_1)$.
    With our \placeholderNotation{}, this is
    \begin{align}
        (\mm, \theta) \models &\ \psi(\rho[\theta](\uv_0)\uv_1) \notag \\
        & \; \wedge \bigwedge\nolimits_{k=1}^m \ulambda{}_k \cdot f^{C-1}[\theta](\uv_1) + f^{C-1}[\theta] \circ \pi_{\Ker(f^C)}(u_k) \neq 0 \wedge \bigwedge\nolimits_{l=1}^{|\ux_1|} f^C[\theta](v_{1, l}) = 0.\!\! \label{tag_second_line_in_here}
    \end{align}
    Since clearly $\rho[\theta](\uv_0) \in \Image(\rho)$, we just need to show $\ulambda{}_k \cdot \uv_1 + u_k \not\in \Image(\rho)$ for each $k$.
    First, note that the second line of (\ref{tag_second_line_in_here}) implies $f^{C-1}[\theta] \circ \pi_{\Ker(f^C)}(\ulambda{}_k \cdot \uv_1 + u_k) \neq 0$.
    Suppose we had $\ulambda{}_k \cdot \uv_1 + u_k \in \Image(\rho)$.
    Then, as $f \mid \rho$, also $\ulambda{}_k \cdot \uv_1 + u_k \in \Image(f)$, i.e., $\ulambda{}_k \cdot \uv_1 + u_k = f[\theta](v'_{k})$ for some $v'_k \in \VV$.
    Now
    $$
    f^{C-1}[\theta] \circ \pi_{\Ker(f^C)}(\ulambda{}_k \cdot \uv_1 + u_k) = f^{C-1}[\theta] \circ \pi_{\Ker(f^C)}(f[\theta](v'_k)) = f^C[\theta] \circ \pi_{\Ker(f^C)}(v'_k) = 0
    $$
    follows, contradicting $f^{C-1}[\theta] \circ \pi_{\Ker(f^C)}(\ulambda{}_k \cdot \uv_1 + u_k) \neq 0$.
    Hence, we obtain $\ulambda{}_k \cdot \uv_1 + u_k \not\in \Image(\rho)$ for all $k$, completing the proof.
\end{proof}
\end{theorem}

\noindent The following likely also holds with images instead of kernels:

\begin{remark} \label{remark_all_completios_TV}
    Suppose that $T$ satisfies \Hfour{}. There is a kernel configuration $C$ and a polynomial $\rho$ such that
    any completion of the theory $TV$ is of the form $\Th(\mm, \Ker(\rho))$ for some $(\mm, \theta) \models T\theta^C$.
\begin{proof}
    Let $(\mm, V)$ be a sufficiently saturated model of a completion of $TV$.
    Choose a basis $\bb_V$ of $V$ and a basis $\set{v^i_k : i \in \omega, k \in \kk}$ of $\VV$ over $V$.
    Define an endomorphism $\theta$ of $\VV$ by setting $\theta(v) := 0$ for all $v \in \bb_V$ and $\theta(v^i_k) := v^{i+1}_k$ for all $i \in \omega$ and $k \in \kk$.
    Then $\Ker(\rho[\theta]) = V$ for $\rho = X$.

    Recall that every endomorphism is a $C_\infty$-endomorphism, where $C_\infty$ is the unique transcendental kernel configuration with $C_\infty(f) = \infty$ for all $f \in \Kp{}$.
    Hence $(\mm, \theta) \models T^{C_\infty}_\theta$.
    Choose some $(\mm', \theta') \models T\theta^{C_\infty}$ with $(\mm, \theta) \subseteq (\mm', \theta')$.
    Clearly,
    $$
    \acl^{\mm'}(\varnothing) \cap \Ker(\rho[\theta'])
    =
    \acl^{\mm}(\varnothing) \cap \Ker(\rho[\theta])
    =
    \acl^{\mm}(\varnothing) \cap V.
    $$
    Thus Lemma \ref{lemma_completions_TV} yields
    $
    (\mm', \Ker(\rho[\theta'])) \equiv (\mm, V).
    $
\end{proof}
\end{remark}

\noindent In a future paper, we will use Theorem \ref{theorem_kernel_generic} to show that the model companions obtained in \cite{Blo23} are reducts of $T\theta^C$ (for some $C$) and the same base theory $T$. We can then combine Remark \ref{remark_all_completios_TV} with our preservation of \NATP{} result, which will also appear in a future paper, to show that the model companions obtained in \cite{Blo23} are \NATP{}.

\section{Replacing $\theta$ with $\zeta[\theta]$}

The next natural question one could ask is whether, given $(\mm, \theta) \models T\theta^C$ and $\zeta \in K[X]$, the reduct $(\mm, \zeta[\theta])$ is again a model of $T\theta^C$.
Except for a few special cases, which we will highlight later, the general answer is no.

\begin{example}
    Let $K = \QQ$. Take $\zeta = X^2$ and let $C$ be transcendental with $C(X^2 + 1) = 1$ and $C(f) = 0$ for all $f \in \QQp{} \setminus \set{X^2 +1}$.
    Given $(\mm, \theta) \models T\theta^C$, the structure $(\mm, \zeta[\theta])$ is not a model of $T\theta^C$.
    This happens because
    $$
    \Ker((X+1)^0[\zeta[\theta]]) = \set{0} \neq \Ker((X^2 +1)[\theta]) = \Ker((X+1)^1[\zeta[\theta]])
    $$
    and $C(X+1) = 0$.
\end{example}

\noindent However, in the example above, one can verify $(\mm, \zeta[\theta]) \models T\theta^{C'}$ for the unique transcendental kernel configuration $C'$ with $C'(X+1) = 1$ and $C'(f) = 0$ for all $f \in \Kp{} \setminus \set{X +1}$.
This raises the question whether we can find such a $C'$ for arbitrary $C$ and $\zeta$.
The general answer will again be no, but we will be able to specify exactly for which $C$ and $\zeta$ we have such a $C'$, and we will be able to describe $C'$ in those cases.

\subsection{The Statement}

Before we state and prove when $(\mm, \zeta[\theta])$ is an existentially closed model of $T^{C'}_\theta$ for some kernel configuration $C'$, we establish some rules for the composition of polynomials:

\begin{lemma} \label{lemma_composition_rules}
    The following holds for any $\zeta \in K[X]$:
    \begin{enumerate}[(i)]
        \item $\rho[\zeta[\theta]] = (\rho \circ \zeta)[\theta]$ for any endomorphism $\theta$;
        \item $(\rho \cdot \eta) \circ \zeta = (\rho \circ \zeta) \cdot (\eta \circ \zeta)$;
        \item if $\rho$ and $\eta$ are relatively prime, then so are $\rho \circ \zeta$ and $\eta \circ \zeta$;
        \item if $\zeta \not\in K$ and $g \in \Kp{}$ are given, then there is a unique $f \in \Kp{}$ with $g \mid (f \circ \zeta)$;
        \item if $\zeta \not\in K$ and $f \in \Kp{}$ are given, then $\deg(f) \mid \deg(g)$ for any irreducible factor $g$ of $f \circ \zeta$.
    \end{enumerate}
\begin{proof}
    Recall that $\zeta[\theta]$ is itself an endomorphism, so all the usual rules from Corollary \ref{corollary_polynomials_kernel_facts} and the paragraph after it hold with $\zeta[\theta]$ instead of $\theta$.
    \begin{enumerate}[(i)]
        \item This follows from the equality
        $
            \rho[\zeta[\theta]] := \sum\nolimits_{i=0}^{\deg(\rho)} (\rho)_i \cdot \zeta[\theta]^i = \sum\nolimits_{i=0}^{\deg(\rho)} (\rho)_i \cdot \zeta^i[\theta] = (\rho \circ \zeta)[\theta].
        $
        \item This is clear.
        \item By Bézout's Identity for polynomials, we have $\chi_1 \cdot \rho + \chi_2 \cdot \eta = 1$. Together with (ii) this implies 
        $$
        (\chi_1 \circ \zeta) \cdot (\rho\circ \zeta) + (\chi_2\circ \zeta) \cdot (\eta\circ \zeta) = (1 \circ \zeta) = 1,
        $$
        and therefore also $\gcd(\rho\circ \zeta, \eta\circ \zeta) = 1$.
        \item Take some non-trivial $\theta$ with $\Ker(g[\theta]) = \VV$.
        Notice that $\spanA{\zeta[\theta]^i : i \in \omega}{K}$ is a vector subspace of the $\deg(g)$-dimensional vector space $\spanA{\theta^i : i \in \omega}{K} = \spanA{\theta^i : 0 \leq i < \deg(g)}{K}$, so there is some $\rho \in K[X] \setminus \set{0}$ with $0 < \deg(\rho) \leq \deg(g)$ and $\rho[\zeta[\theta]] = 0$.
        Since $\zeta \not\in K$, we have $\deg(\rho \circ \zeta) > 0$ and can therefore write $\rho \circ \zeta$ in the form $q \cdot \prod\nolimits_{k=1}^m (f_k \circ \zeta)^{n_k}$ with $m > 0$, all $f_k$ irreducible, and all $n_k > 0$.
        By (iii), we now have
        $$
        \Ker(g[\theta]) = \Ker(\rho[\zeta[\theta]]) = \bigoplus\nolimits_{k=1}^m \Ker((f_k \circ \zeta)^{n_k}[\theta])
        $$
        and hence $g \mid (f_k \circ \zeta)$ for some $k$, since otherwise $\Ker(g[\theta]) \cap \Ker((f_k \circ \zeta)^{n_k}[\theta]) = \set{0}$ for all $k$.
        The uniqueness of $f$ follows from (iii).
        \item Let $g \mid (f \circ \zeta)$ be irreducible, and let $a$ be a root of $g$ in the algebraic closure of $K$.
        This implies that $\zeta(a)$ is a root of the irreducible polynomial $f$.
        We see that both $K(a)$ and $K(\zeta(a))$ are fields.
        Hence
        $$
        \deg(g) = [K(a) : K] = [K(a) : K(\zeta(a))] \cdot [K(\zeta(a)) : K] = [K(a) : K(\zeta(a))] \cdot \deg(f)
        $$
        as $K(a) \supseteq K(\zeta(a)) \supseteq K$.
        \qedhere
    \end{enumerate}
\end{proof}
\end{lemma}

\begin{remark}
    One could hope that for $f$ irreducible, the composition $f \circ \zeta$ stays irreducible or at least has only a single irreducible factor.
    However, this is false; take $f = X$ and $\zeta$ as complicated as desired.
\end{remark}

\begin{definition} \label{def_m_g_s_new_i} \label{def_C_prime_new_ii}
Let $\zeta \in K[X] \setminus K$ be given.
\begin{enumerate}[(i)]
    \item For $g \in \Kp{}$, we define a positive integer $\mf_g(\zeta)$ by setting
    $$
    \mf_g(\zeta) := \max\set{m \in \NN_{>0} : g^m \mid (f \circ \zeta)}
    $$
    for the unique $f \in \Kp{}$ with $g \mid (f \circ \zeta)$, as stated by (iv) of Lemma \ref{lemma_composition_rules}.
    \item Given a kernel configuration $C \in \Cc$, we let $C_{\zeta} \in \Cc$ denote the unique kernel configuration that is algebraic if and only if $C$ is algebraic and that satisfies
    $$
    C_\zeta(f) := \min\set{n \in \NN \cup \set{\infty} : \forall g \in \Fac(f \circ \zeta) : \mf_g(\zeta) \cdot n \geq C(g)}
    $$
    for every $f \in \Kp{}$.
\end{enumerate}
\end{definition}

\begin{theorem} \label{theorem_iterations}
    Let $\zeta \in K[X] \setminus K$ be given, and let $C, C' \in \Cc$ be two kernel configurations.
    For an existentially closed $(\mm, \theta) \models T_\theta^C$, the structure $(\mm, \zeta[\theta])$ is an existentially closed model of $T_\theta^{C'}$ if and only if $C' = C_\zeta$ and
    \begin{itemize}
        \item[$(\triangle)$] $\mf_g(\zeta) \cdot C_\zeta(f) = C(g)$ holds for all irreducible polynomials $f \in \Kp{}$ with $C_\zeta(f) > 1$ and all $g \in \Fac(f \circ \zeta) \cap \Kp{0<C}$.
    \end{itemize}
\begin{proof}
    We fix $\zeta \in K[X] \setminus K$ and an existentially closed model $(\mm, \theta) \models T^C_\theta$ throughout the proof.
    To make our notation less complicated, we set $\mf_g := \mf_g(\zeta)$ for any $g \in \Kp{}$.

    We start by showing that we must have $C' = C_\zeta$ if $(\mm, \zeta[\theta]) \models T^{C'}_\theta$ is existentially closed.
    For this, we first study how the sets $\Ker(\rho[\zeta[\theta]])$ and $\Image(\rho[\zeta[\theta]])$ can be written in terms of our original endomorphism $\theta$, for a polynomial $\rho \in K[X]$.
    Since we have $\Ker(\rho \cdot \eta) = \Ker(\rho) \oplus \Ker(\eta)$ if $\rho$ and $\eta$ are relatively prime (Corollary \ref{corollary_polynomials_kernel_facts}), it suffices to do this in the case where $\rho$ is a power of some $f \in \Kp{}$:
    \begin{subclaim}\label{claim_irred_composition_g_s}
        The following holds:
    \begin{enumerate}[(i)]
        \item We have
        $
        \Ker(f^n[\zeta[\theta]]) = \bigoplus\nolimits_{g \in \Fac(f \circ \zeta)} \Ker(g^{\min(\mf_g \cdot n, C(g))}[\theta])
        $.
        If $C_\zeta(f) < \infty$, then the following holds for any $k \geq 0$:
        $$
        \Ker(f^{C_\zeta(f)+k}[\zeta[\theta]]) = \bigoplus\nolimits_{g \in \Fac(f \circ \zeta)} \Ker(g^{C(g)}[\theta]).
        $$
        \item We have
        $
        \Image(f^n[\zeta[\theta]]) = \Image\big(\big(\prod\nolimits_{g \in \Fac(f \circ \zeta)}g^{\min(\mf_g \cdot n, C(g))}\big)[\theta]\big).
        $
        If $C_\zeta(f) < \infty$, then the following holds for any $k \geq 0$:
        $$
        \Image(f^{C_\zeta(f)+k}[\zeta[\theta]]) = \Image\Big(\Big(\prod\nolimits_{g \in \Fac(f \circ \zeta)}g^{C(g)}\Big)[\theta]\Big).
        $$
    \end{enumerate}
    \begin{innerproof}
        For (i), write $f^n \circ \zeta = (f \circ \zeta)^n = \big(\prod_{g \in \Fac(f \circ \zeta)} g^{\mf_g}\big)^n$ as the product of its irreducible factors and use the fact that $\Ker(g^C[\theta]) = \Ker(g^{C+1}[\theta])$ holds for all $g \in \Kp{C<\infty}$ (as $\theta$ is a $C$-endomorphism; see Fact \ref{remark_alg_kc} for the algebraic case).
        The equation for $\Ker(f^{C_\zeta(f)+k}[\zeta[\theta]])$ follows since $\mf_g \cdot C_\zeta(f) \geq C(g)$ holds by definition of $C_\zeta$.

        Point (ii) follows in the same way, using Fact \ref{corollary_polynomials_kernel_facts2} and $\Image(g^C) = \Image(g^{C+1})$, which holds by the $C$-image-completeness of $\theta$ (see Definition \ref{def_C_image_comple} and (ii) of Fact \ref{fact_c_image_comple}) for all $g \in \Kp{C<\infty}$.
    \end{innerproof}
    \end{subclaim}

    \begin{subclaim} \label{lemma_restrict_possible_C_prime_new}
    The following holds:
    \begin{enumerate}[(i)]
        \item $(\mm, \zeta[\theta]) \models T_\theta^{C_\zeta}$.
        \item If $(\mm, \zeta[\theta]) \models T_\theta^{C'}$ is also existentially closed, then $C' = C_\zeta$.
    \end{enumerate}
\begin{innerproof}
    We start with (i).
    First assume that $C$ is algebraic with $\mipo(C) = \prod_{k=1}^m g_k^{C(g_k)}$ (with all $g_k$ distinct and $\Kp{0<C} = \set{g_1, \dots, g_m}$).
    For each $k$, there is a unique $f_k \in \Kp{}$ with \hbox{$g_k \in \Fac(f_k \circ \zeta)$} by (iv) of Lemma \ref{lemma_composition_rules}.
    Define $F := \set{f_k : 1 \leq k \leq m}$.
    By definition, $\mf_{g_k}$ is the maximal $q > 0$ with $g_k^q \mid (f_k \circ \zeta)$, and $\mf_{g_k} \cdot C_\zeta(f_k) \geq C(g_k)$.
    Hence, we obtain
    $$
    g_k^{C(g_k)} \mid \Big(\prod\nolimits_{f \in F} f^{C_\zeta(f)}\Big) \circ \zeta
    $$
    for every $k \in \set{1, \dots, m}$.
    Now for every $f \in \Kp{} \setminus F$ we have $\Fac(f \circ \zeta) \subseteq \Kp{C=0}$ by the definition of $F$ and (iii) of Lemma \ref{lemma_composition_rules}.
    This implies $0 \cdot \mf_g \geq C(g)$ for any $g \in \Fac(f \circ \zeta)$, and therefore also $C_\zeta(f) = 0$.
    With this, we obtain $\mipo(C_\zeta) = \prod\nolimits_{f \in F} f^{C_\zeta(f)}$.
    We conclude
    $$
    \mipo(C) \mid \mipo(C_\zeta) \circ \zeta
    $$
    and hence $\mipo(C_\zeta)[\zeta[\theta]] = 0$.
    In the transcendental case, this follows directly from (i) of Claim \ref{claim_irred_composition_g_s}.

    For (ii), recall the definition of $C$-completeness (Definition \ref{def_c_complete_new}) and that every existentially closed model of $T^C_\theta$ is $C$-complete (Theorem \ref{corollary_c_comp}).
    Assume that $(\mm, \zeta[\theta]) \models T_\theta^{C'}$ is existentially closed.
    Notice that if $C$ is algebraic, then $\zeta[\theta]$ cannot be $C'$-complete if $C'$ is transcendental, as the type $\set{\text{``$(\zeta[\theta]^i(x) : i \in \omega)$ is linearly independent over $\VV$"}}$ cannot be realized.
    On the other hand, if $C$ is transcendental, then $\zeta[\theta]$ cannot be a $C'$-endomorphism if $C'$ is algebraic, as we can easily find some $v \in \VV$ with $\mipo(C')[\zeta[\theta]](v) \neq 0$.
    So far, we have shown that $C'$ is algebraic/transcendental if and only if $C$ is algebraic/transcendental.
    Assume toward a contradiction that we have $f \in \Kp{}$ with $C'(f) \neq C_\zeta(f)$.
    There are two cases:
    \begin{enumerate}[(a)]
        \item $C_\zeta(f) < C'(f)$.
        In this case, we have $\Ker(f^{C_\zeta(f)}[\zeta[\theta]]) = \Ker(f^{C_\zeta(f)+1}[\zeta[\theta]])$ by (i) (recall Fact \ref{remark_alg_kc} for $C$ algebraic), which contradicts that $\zeta[\theta]$ is $C'$-complete (see, e.g., Lemma \ref{corollary_kernels_growing}).
        \item $C_\zeta(f) > C'(f)$.
        In this case, we can use $\Ker(f^{C'(f)}[\zeta[\theta]]) = \Ker(f^{C'(f)+1}[\zeta[\theta]])$ ($\zeta[\theta]$ is a $C'$-endomorphism) and (i) of Claim \ref{claim_irred_composition_g_s} to obtain
        $$
        \bigoplus\nolimits_{g \in \Fac(f \circ \zeta)} \Ker(g^{\mf_g \cdot C'(f)}[\theta]) = \bigoplus\nolimits_{g \in \Fac(f \circ \zeta)} \Ker(g^{\mf_g \cdot (C'(f)+1)}[\theta]).
        $$
        As $C'(f) < C_\zeta(f)$, we know, by definition of $C_\zeta(f)$, that there is at least one $g \in \Fac(f \circ \zeta)$ with $\mf_g \cdot C'(f) < C(g)$.
        This implies $\Ker(g^n[\theta]) = \Ker(g^{n+1}[\theta]) = \Ker(g^{n+\mf_g}[\theta])$ for some $n < C(g)$.
        This contradicts that $\theta$ is a $C$-complete endomorphism (see again Lemma \ref{corollary_kernels_growing}).
    \end{enumerate}
    We obtain $C'(f) = C_\zeta(f)$ for all $f \in \Kp{}$.
    Since kernel configurations are completely determined by being algebraic/transcendental and the function $f \mapsto C(f)$ (see Definition \ref{def_kernel_conf}), we conclude $C' = C_\zeta$.
\end{innerproof}
\end{subclaim}

\noindent Next, we show the implication from left to right.
More precisely, we show that $(\mm, \zeta[\theta])$ cannot be an existentially closed model of $T_\theta{\!\!}^{C_\zeta}$ if \iterCond{} from Theorem \ref{theorem_iterations} fails.

\begin{subclaim}
    Suppose that there is some $f \in \Kp{}$ such that the following holds:
    \begin{enumerate}[(i)]
        \item There is some $g \in \Fac(f \circ \zeta)$ with $C(g) > 0$ such that $\mf_g \cdot C_\zeta(f) > C(g)$;
        \item $C_\zeta(f) \geq 2$.
    \end{enumerate}
    Then $(\mm, \zeta[\theta])$ is not an existentially closed model of $T_\theta{\!\!}^{C_\zeta}$.
\begin{innerproof}
    We show that $\zeta[\theta]$ cannot be $C_\zeta$-complete (see Definition \ref{def_c_complete_new}), so $(\mm, \zeta[\theta]) \models T_\theta{\!\!}^{C_\zeta}$ cannot be existentially closed by Theorem \ref{corollary_c_comp}.
    Fix $f \in \Kp{}$, as in the statement, and $g \in \Fac(f \circ \zeta)$, as described in (i).

    If we have $C_\zeta(f) = \infty$, then (i) implies $0 < C(g) < \infty$.
    Since $\theta$ is $C$-complete, we obtain $\Ker(g^C[\theta]) = \Ker(g^{C+1}[\theta])$ and some $v \in \Ker(g^C[\theta]) \setminus \Ker(g^{C-1}[\theta])$.
    If $\zeta[\theta]$ were $C_\zeta$-complete, we would obtain $v \in \Image(f[\zeta[\theta]]) \subseteq \Image(g^{\mf_g}[\theta])$, as $g^{\mf_g} \mid f \circ \zeta$ holds by definition of $\mf_g$.
    However, this implies that $\Ker(g^{C + \mf_g}[\theta]) \setminus \Ker(g^C[\theta])$ is non-empty, contradicting $\Ker(g^C[\theta]) = \Ker(g^{C+1}[\theta])$ (recall $\mf_g > 0$ and note that $\Ker(g^C[\theta]) = \Ker(g^{C+1}[\theta])$ implies $\Ker(g^C[\theta]) = \Ker(g^{C+s}[\theta])$ for any $s \geq 0$).

    Now assume $C_\zeta(f) < \infty$.
    This time, (i) implies that we have $0 < C(g) < \infty$ and \hbox{$\mf_g \cdot n + r = C(g)$} for some $n, r \in \NN$ with $n < C_\zeta(f)$ and $0 \leq r < \mf_g$.
    First, assume $n > 0$.
    Use $C$-completeness, or rather Lemma \ref{corollary_kernels_growing}, to find
    $$
    v \in \Ker(g^{n \cdot \mf_g}[\theta]) \setminus \Ker(g^{(n-1) \cdot \mf_g + r}[\theta]) \subseteq \Ker(f^{n}[\zeta[\theta]]).
    $$
    Assuming that $\zeta[\theta]$ is $C_\zeta$-complete, we can find some $v' \in \VV$ with $f[\zeta[\theta]](v') = v$, as $n < C_\zeta(f)$.
    Because $g^{\mf_g} \mid f \circ \zeta$ holds by definition of $\mf_g$, we obtain
    $$
    (f \circ \zeta/g^{\mf_g})[\theta](v') \in \Ker(g^{(n + 1) \cdot \mf_g}[\theta]) \setminus \Ker(g^{n \cdot \mf_g + r}[\theta]).
    $$
    However, by definition, $(n + 1) \cdot \mf_g > C(g) = n \cdot \mf_g + r$.
    Since $\theta$ is a $C$-endomorphism, we have $\Ker(g^{C + k}[\theta]) = \Ker(g^C[\theta])$ for any $k \geq 0$.
    Hence, no such $v'$ can exist, so $\zeta[\theta]$ cannot be $C_\zeta$-complete.
    In the case where $n = 0$, we similarly use the $C$-completeness of $\theta$ to find an element $v \in \Ker(g^r[\theta]) \setminus \set{0} \subseteq \Ker(f[\zeta[\theta]])$.
    We then use the $C_\zeta$-completeness of $\zeta[\theta]$ together with (ii), i.e., $C_\zeta(f) \geq 2$, to construct an element in
    $$
    \Ker(g^{\mf_g + r}[\theta]) \setminus \Ker(g^{\mf_g}[\theta]) = \Ker(g^C[\theta]) \setminus \Ker(g^C[\theta]) = \varnothing,
    $$
    leading to the same contradiction.
\end{innerproof}
\end{subclaim}

\noindent It remains to show the implication from right to left in Theorem \ref{theorem_iterations}.
For this, we use our axiomatization of existentially closed models of $T^C_\theta$ (and $T_\theta{\!\!}^{C_\zeta}$), i.e., Theorem \ref{theorem_big_characterization}.
The $C_\zeta$-image-completeness of $\zeta[\theta]$ follows directly from (ii) of Claim \ref{claim_irred_composition_g_s}.
It remains to show that we have
$$
(\mm, \zeta[\theta]) \models \exists \ux' \in \VV : \psi'_\theta(\ux') \wedge S'(\ux')
$$
for any $C_\zeta$-sequence-system $S'(\ux')$ over $(\VV, \zeta[\theta])$ and $L(M)$-formula $\psi'(\uxvec')$ that is bounded by $S'$ and implies no finite disjunction of non-trivial linear dependencies in $\uxvec'$ over $\VV$.

\begin{remark*}
        Notice that so far we have never really distinguished between $\theta$ as a function symbol and $\theta$ as an actual function in some structure.
        In our \placeholderNotation{} (Definition \ref{def_placeholder_notation}), we define $\psi'_\theta(\ux')$ to be the $L_\theta$-formula obtained by replacing every instance of $x_{\li, k}'^i$ and $x'^i_{\ld, k}$ with the $L_\theta$-terms $\theta^i(x'_{\li, k})$ and $\theta^i(x'_{\ld, k})$.
        To verify $(\mm, \zeta[\theta]) \models \psi'_\theta(\uv')$, we actually need to verify that $\psi'(\uxvec{}')$, with every instance of $x'^i_{\li, k}$ and $x'^i_{\ld, k}$ replaced by $\zeta[\theta]^i(v'_{\li, k})$ and $\zeta[\theta]^i(v'_{\ld, k})$, holds in $\mm$.
\end{remark*}

\noindent Our strategy is to find an $L(M)$-formula $\psi(\uxvec)$ and a $C$-sequence-system $S(\ux)$ that satisfy
$$
(\mm, \theta) \models \exists \ux\in \VV : \psi_\theta(\ux) \wedge S(\ux) \quad \Rightarrow \quad (\mm, \zeta[\theta]) \models \exists \ux'\in \VV : \psi'_\theta(\ux') \wedge S'(\ux')
$$
and use Theorem \ref{theorem_big_characterization} to conclude.
The first step is to define $S(\ux)$ from $S'(\ux')$ and prove some important properties.
We write
$$
    S'(\ux') = \bigwedge\nolimits_{k=1}^{n} f_k^{r_k}[\theta](x'_{\ldd, k}) = u'_k
$$
with $\ux' := \ux'_\li\ux'_\ld := (x'_{\lii, k} : 1 \leq k \leq m)(x'_{\ldd, k} : 1 \leq k \leq n)$ and $\uu' = (u'_1, \dots, u'_{n})$ satisfying the following conditions:
\begin{itemize}
    \item If $C_\zeta$ is algebraic, then $m = 0$;
    \item $f_k \in \Kp{0<C_\zeta}$ and $r_k \in \set{q \in \NN : 0 < q \leq C_\zeta(f_k)}$ hold for all $k \in \set{1, \dots, n}$;
    \item If $C_\zeta(f_k) < \infty$, then $u'_k \in \Ker(f_k{\!\!}^{C_\zeta-r_k}[\zeta[\theta]])$.
\end{itemize}
These conditions hold by the definition of a $C_\zeta$-sequence-system over $(\VV, \zeta[\theta])$; see Definition \ref{def_param_c_sequence_system} and Definition \ref{def_c_sequence_system}.
Throughout the rest of this proof, we also assume that \iterCond{} of Theorem \ref{theorem_iterations} holds, i.e., that $\mf_g \cdot C_\zeta(f) = C(g)$ holds for all $f \in \Kp{}$ with $C_\zeta(f) > 1$ and all $g \in \Fac(f \circ \zeta) \cap \Kp{0<C}$.

\begin{subdefinition} \label{def_iter_c_ss}
    For each $k \in \set{1, \dots, n}$, define $G_k := \Fac(f_k \circ \zeta) \cap \Kp{0<C}$ and write $G_k = \set{g_1, \dots, g_{s_k}}$.
    For $k \in \set{1, \dots, n}$ and $g \in G_k$, define $\eta_{k, g} := (f_k \circ \zeta)^{r_k} / g^{\mf_g \cdot r_k}$ and
    \begin{enumerate}[(i)]
        \item if $C_\zeta(f_k) = \infty$, set $q_{k, g} := \mf_g \cdot r_k$ and define $u_{k, g}$ such that $\eta_{k, g}[\theta](u_{k, g}) = u'_k$ if $g = g_1$ and $u_{k, g} = 0$ otherwise.
        Note that if $C_\zeta(f_k) = \infty$, then we must have $C(g) \in \set{0, \infty}$ for each $g \mid (f_k \circ \zeta)^{r_k}$ by \iterCond{}.
        By (i) and (iii) of Definition \ref{def_c_complete_new}, this implies that $(f_k \circ \zeta)^{r_k}[\theta]$, and hence also $\eta_{k, g}[\theta]$, is surjective.
        \item if $1 < C_\zeta(f_k) < \infty$, set $q_{k, g} := \mf_g \cdot r_k$ and $u_{k, g} := \eta_{k, g}[\theta]^{-1}_{\restriction \Ker(g^C[\theta])} \circ \pi_{\Ker(g^C[\theta])}(u'_{k})$.
        By definition, $\eta_{k, g}$ and $g^C$ are relatively prime, so $\eta_{k, g}[\theta]^{-1}_{\restriction \Ker(g^C[\theta])}$ is given by $\chi[\theta]$ for some $\chi \in K[X]$ (see Fact \ref{fact_inv_poly_on_ker}).
        \item if $C_\zeta(f_k) = 1$, set $q_{k, g} := C(g)$ and $u_{k, g} := 0$.
    \end{enumerate}
    Now we define $\ux_\li := (x_{\li, k} : 1 \leq k \leq m)$, $\ux_\ld := (x_{\ld, k, g} : 1 \leq k \leq n, g \in G_k)$, $\ux := \ux_\li\ux_\ld$ and
    $$
    S(\ux) := \bigwedge\nolimits_{k=1}^{n} \bigwedge\nolimits_{g \in G_k} g^{q_{k, g}}[\theta](x_{\ld, k, g}) = u_{k, g}.
    $$
\end{subdefinition}

\begin{subclaim} \label{lemma_iter_c_ss_holds}
    The formula $S(\ux)$ is a $C$-sequence-system over $(\VV, \theta)$, and given a tuple $\uv \in \VV$ with $(\mm, \theta) \models S(\uv)$, we also have $(\mm, \zeta[\theta]) \models S'(\uv')$ for the tuple $\uv' \in \VV$ defined by
    $$\uv'_\li := \uv_\li \quad  \text{and} \quad v'_{\ld, k} := \sum\nolimits_{g \in G_k} v_{\ld, k, g}.$$
\begin{innerproof}
    We first verify that $S(\ux)$ is indeed a $C$-sequence-system over $(\VV, \theta)$.
    First, it is clear that $|\ux_\li| = 0$ if $C$ is algebraic, since $|\ux_\li| = |\ux'_\li|$ and $C$ is algebraic if and only if $C_\zeta$ is algebraic.
    Every $G_k$ is a subset of $\Kp{0<C}$ by definition.
    Fix $k \in \set{1, \dots, n}$ and $g \in G_k$, and consider the following cases:
    \begin{enumerate}[(i)]
        \item $C_\zeta(f_k) = \infty$.
        This implies $C(g) = \infty$ by \iterCond{} of Theorem \ref{theorem_iterations}.
        By definition, we have $q_{k, g} = \mf_g \cdot r_k$.
        Since $r_k > 0$ and $\mf_g > 0$, we obtain $0 < q_{k, g} < C(g)$.
        \item $1 < C_\zeta(f_k) < \infty$.
        Again, $q_{k, g} = \mf_g \cdot r_k$ holds by definition.
        Since $0 < r_k \leq C_\zeta(f_k)$ holds by definition of $r_k$ and $\mf_g \cdot C_\zeta(f_k) = C(g)$ holds by \iterCond{}, we obtain $0 < q_{k, g} \leq C(g)$.
        We also obtain
        $$
        u'_k \in \Ker(f_k{\!\!}^{C_\zeta-r_k}[\zeta[\theta]]) = \bigoplus\nolimits_{g \in G_k}\!\!\! \Ker(g^{\min(\mf_g \cdot (C_\zeta(f_k)-r_k), C(g))}[\theta]) = \bigoplus\nolimits_{g \in G_k}\!\!\! \Ker(g^{C - q_{k, g}}[\theta])
        $$
        using our assumption on $u'_k$ and (i) of Claim \ref{claim_irred_composition_g_s}.
        As the map $\eta_{k, g}[\theta]^{-1}_{\restriction \Ker(g^C[\theta])}$ is an automorphism of $\Ker(g^C)$, we obtain that $u_{k, g} := \eta_{k, g}[\theta]^{-1}_{\restriction \Ker(g^C[\theta])} \circ \pi_{\Ker(g^C[\theta])}(u'_{k})$ lies in $\Ker(g^{C - q_{k, g}}[\theta])$.
        \item $C_\zeta(f_k) = 1$.
        In this case, $0 < q_{k, g} = C(g) \leq \mf_g \cdot C_\zeta(f_k) = \mf_g = \mf_g \cdot r_k$ and $u_{k, g} = 0$ hold trivially by definition.
        In particular, we obtain $u_{k, g} \in \Ker(g^{C - q_{k, g}}[\theta])$.
    \end{enumerate}
    This completes the proof that $S(\ux)$ is a $C$-sequence-system.

    Now let $\uv \in \VV$ with $(\mm, \theta) \models S(\uv)$ be given and let $\uv'$ be defined as in the statement.
    The goal is to show that $(\mm, \zeta[\theta]) \models S'(\uv')$.
    Notice that $f_k^{r_k}[\zeta[\theta]] = \big((f_k \circ \zeta)^{r_k} / g^{\mf_g \cdot r_k}\big)[\theta] \circ g^{\mf_g \cdot r_k}[\theta]$ and recall that $\eta_{k, g} = (f_k \circ \zeta)^{r_k} / g^{\mf_g \cdot r_k}$ holds by definition.
    Using $q_{k, g} = \mf_g \cdot r_k$, we obtain
    \begin{align*}
        f_k^{r_k}[\zeta[\theta]](v'_{\ld, k}) &= \sum\nolimits_{g \in G_k} \eta_{k, g}[\theta] \circ g^{\mf_g \cdot r_k}[\theta](v_{\ld, k, g}) \\
        &= \sum\nolimits_{g \in G_k} \eta_{k, g}[\theta] \circ g^{q_{k, g}}[\theta](v_{\ld, k, g}) \\&= \eta_{k, g_1}[\theta](u_{k, g_1}) + \sum\nolimits_{l=2}^{s_k} \eta_{k, g_l}[\theta](0) = \eta_{k, g_1}[\theta](u_{k, g_1}) = u'_k
    \end{align*}
    in case (i), i.e., $C_\zeta(f_k) = \infty$.
    Similarly, and with $u'_k \in \bigoplus_{g \in G_k} \Ker(g^C[\theta])$, we obtain
    \begin{align*}
    f_k^{r_k}[\zeta[\theta]](v'_{\ld, k}) &= \sum\nolimits_{g \in G_k} \eta_{k, g}[\theta] \circ g^{q_{k, g}}[\theta](v_{\ld, k, g})\\ &= \sum\nolimits_{g \in G_k} \eta_{k, g}[\theta] \circ \eta_{k, g}[\theta]^{-1}_{\restriction \Ker(g^C[\theta])} \circ \pi_{\Ker(g^C[\theta])}(u'_{k}) \\& = \sum\nolimits_{g \in G_k} \pi_{\Ker(g^C[\theta])}(u'_k) = u'_k
    \end{align*}
    in case (ii), i.e., $1 < C_\zeta(f_k) < \infty$.
    In case (iii), i.e., $C_\zeta(f_k) = 1$, we easily obtain
    \begin{align*}
        f_k^{r_k}[\zeta[\theta]](v'_{\ld, k}) &= \sum\nolimits_{g \in G_k} \eta_{k, g}[\theta] \circ g^{\mf_g \cdot r_k}[\theta](v_{\ld, k, g}) \\&= \sum\nolimits_{g \in G_k} \eta_{k, g}[\theta] \circ g^{\mf_g \cdot r_k - q_{k, g}}[\theta](g^{q_{k, g}}[\theta](v_{\ld, k, g})) \\&= \sum\nolimits_{g \in G_k} \eta_{k, g}[\theta] \circ g^{\mf_g \cdot r_k - q_{k, g}}[\theta](0) = 0 = u'_k
    \end{align*}
    as $\mf_g \cdot r_k \geq q_{k, g}$ and $u_{k, g} = 0$ hold for all $g \in G_k$, and as the element $u'_k$ lies in $\Ker(f_k{\!\!}^{C_\zeta- r_k}[\zeta[\theta]]) = \Ker(f_k^0[\zeta[\theta]]) = \set{0}$.
    We conclude that $(\mm, \zeta[\theta]) \models S'(\uv')$.
\end{innerproof}
\end{subclaim}

\noindent Recall that our strategy is to find an $L(M)$-formula $\psi(\uxvec)$ and a $C$-sequence-system $S(\ux)$ that satisfy
$$
(\mm, \theta) \models \exists \ux\in \VV : \psi_\theta(\ux) \wedge S(\ux) \quad \Rightarrow \quad (\mm, \zeta[\theta]) \models \exists \ux'\in \VV : \psi'_\theta(\ux') \wedge S'(\ux')
$$
and use Theorem \ref{theorem_big_characterization} to conclude.
We already have the $C$-sequence-system.
Before we define the $L(M)$-formula $\psi(\uxvec{})$, note that if we want to apply Theorem \ref{theorem_big_characterization}, we must also ensure that $\psi(\uxvec)$ is bounded by $S(\ux)$ and implies no finite disjunction of non-trivial linear dependencies in $\uxvec$ over $\VV$.
The following claim will help with the latter:

\begin{subclaim} \label{lemma_independent_2}
    The tuple $((X^i \circ \zeta : g \in G_k) : 0 \leq i < \deg(f_k^{r_k}))$ is $K$-linearly independent in the vector space $\bigoplus_{g\in G_k} K[X]/(g^{q_{k, g}})$.
\begin{innerproof}
    For easier notation, define $f = f_k$, $r := r_k$, $G := G_k$, and $q_g := q_{k, g}$ for all $g \in G$.
    By the Chinese remainder theorem, we only need to show that the tuple $(X^i \circ \zeta : 0 \leq i < \deg(f^r))$ is linearly independent in $K[X]/(\prod_{g\in G}g^{q_g})$.
    Suppose that the opposite is true.
    Then there are $\xi \in K[X] \setminus \set{0}$ with $\deg(\xi) < \deg(f^r)$ and $\chi \in K[X]$ such that
    $$
    \xi \circ \zeta = \chi \cdot \prod\nolimits_{g\in G} g^{q_g}
    $$
    holds.
    Without loss, we can assume that $\xi$ is monic, so we can write $\xi = \prod_{i=1}^q h_i^{l_i}$ with $h_i \in \Kp{}$ and $l_i > 0$ for all $i \in \set{1, \dots, q}$.
    With (ii) of Lemma \ref{lemma_composition_rules}, and the equation above, we obtain
    $$
    \prod\nolimits_{g\in G} g^{q_g} \mid \prod\nolimits_{i=1}^q (h_i \circ \zeta)^{l_i}.
    $$
    Take any $g \in G$.
    By definition of $G$ (see Definition \ref{def_iter_c_ss} and recall $G = G_k$) and (iv) of Lemma \ref{lemma_composition_rules}, $f$ is the only monic irreducible polynomial $f' \in \Kp{}$ with $g \mid f' \circ \zeta$.
    Hence, there must be some $i_0 \in \set{1, \dots, q}$ with $h_{i_0} = f$.
    By the definition of $\mf_g$, we have
    $$
    g^{\mf_g} \mid f \circ \zeta \quad \text{and} \quad g^{\mf_g +1} \nmid f \circ \zeta,
    $$
    so we must have $l_{i_0} \geq \lceil q_g/ \mf_g\rceil$.
    In the case $C_\zeta(f) > 1$, we have $q_g = \mf_g \cdot r$ by Definition \ref{def_iter_c_ss}, so $f^r \mid \xi$.
    In the case $C_\zeta(f) = 1$, we still have $\lceil q_g/ \mf_g\rceil \geq 1$, as $q_g \neq 0$.
    Since $0 < r \leq C_\zeta(f) = 1$, we also obtain $f^r \mid \xi$ in this case.
    However, in both cases $f^r \mid \xi$ contradicts that $\xi \in K[X] \setminus \set{0}$ satisfies $\deg(\xi) < \deg(f^r)$.
\end{innerproof}
\end{subclaim}

\noindent We will now define our $L(M)$-formula $\psi(\uxvec{})$:

\begin{subdefinition} \label{def_iter_fml}
    For all $k, g, i$ with $1 \leq k \leq n$, $g \in G_k$, and $0 \leq i < \deg(f_k^{r_k})$, we define the polynomials $\rf_{k, g, i}, \chi_{k, g, i} \in K[X]$ such that
    $$
    \deg(\rf_{k, g, i}) < \deg(g^{q_{k, g}}) \quad \text{and} \quad \chi_{k, g, i} \cdot g^{q_{k, g}} + \rf_{k, g, i} = X^i \circ \zeta.
    $$
    We define $\psi(\uxvec)$ to be the $L(M)$-formula obtained by taking $\psi'(\uxvec{}')$ and replacing
    \begin{enumerate}[(i)]
        \item each occurrence of $x'^i_{\li, k}$ with $(X^i \circ \zeta)[x_{\li, k}]$;
        \item each occurrence of $x'^i_{\ld, k}$ with $\sum_{g \in G_k} \big(\rf_{k, g, i}[x_{\ld, k, g}] + \chi_{k, g, i}[\theta](u_{k, g})\big)$.
    \end{enumerate}
    Here we set $\rho[x_{\li, k}] := \sum_{i=0}^{\deg(\rho)} (\rho)_i \cdot x^i_{\li, k}$, and so on.
    Also, recall that each $x^i_{\li, k}$ is the placeholder variable for $\theta^i(x_{\li, k})$ from our \placeholderNotation{}.
\end{subdefinition}

\begin{subclaim}
    The following holds:
    \begin{enumerate}[(i)]
        \item The formula $\psi(\uxvec)$ is bounded by the $C$-sequence-system $S(\ux)$ from Definition \ref{def_iter_c_ss}.
        \item The formula $\psi(\uxvec)$ implies no finite disjunction of non-trivial linear dependencies in $\uxvec$ over $\VV$.
    \end{enumerate}
\begin{innerproof}
    Point (i) is clear, as $\deg(\rf_{k, g, i}) < \deg(g^{q_{k, g}})$, so each $x^j_{\ld, k, g}$ can only appear in the formula $\psi(\uxvec)$ for $j < \deg(g^{q_{k, g}})$.
    For point (ii), note that
    \begin{enumerate}[(a)]
        \item for each $k \in \set{1, \dots, m}$, the sequence of $\LK(\VV)$-terms
        $$
        B_{(a), k} := \big((X^i \circ \zeta)[x_{\li, k}] : i \in \omega\big)
        $$
        is linearly independent over $\VV$.
        To see this, recall that $\zeta \in K[X] \setminus K$, so we obtain $\deg(X^i \circ \zeta) < \deg(X^j \circ \zeta)$ if and only if $i < j$.
        \item for each $k \in \set{1, \dots, n}$, the sequence of $\LK(\VV)$-terms
        $$
        B_{(b), k} :=\Big(\sum\nolimits_{g \in G_k} \big(\rf_{k, g, i}[x_{\ld, k, g}] + \chi_{k, g, i}[\theta](u_{k, g})\big) : 0 \leq i < \deg(f_k^{r_k})\Big)
        $$
        is linearly independent over $\VV$.
        Without loss, we can replace all $\chi_{k, g, i}[\theta](u_{k, g})$ subterms with $0$ and check linear independence over the empty set.
        As each $\rf_{k, g, i}$ is the remainder of $(X^i \circ \zeta)$ divided by $g^{q_{k, g}}$, this follows directly from the linear independence of the tuple $((X^i \circ \zeta : g \in G_k) : 0 \leq i < \deg(f_k^{r_k}))$ in $\bigoplus_{g\in G_k} K[X]/(g^{q_{k, g}})$ (see Claim \ref{lemma_independent_2}).
    \end{enumerate}
    Now the concatenation $B$ of all these $B_{(a), k}$'s and $B_{(b), k}$'s is linearly independent over $\VV$, as the sets of used variables are disjoint for different sequences.
    Since $\psi'(\uxvec{}')$ is bounded by $S'$, the variable $x'^i_{\ld, k}$ can only appear in $\psi'(\uxvec{}')$ for $i < \deg(f_k^{r_k})$, so $B$ contains all the terms with which we replace in the definition of $\psi(\uxvec)$ (see Definition \ref{def_iter_fml} above).
    Also, note that every variable in $\uxvec{}'$ gets replaced in that definition.
    Since $\psi'(\uxvec{}')$ implies no finite disjunction of non-trivial linear dependencies in $\uxvec{}'$ over $\VV$, Corollary \ref{corollary_iter_lin_indp} yields that $\psi(\uxvec{})$ also implies no finite disjunction of non-trivial linear dependencies in $\uxvec{}$ over $\VV$.
\end{innerproof}
\end{subclaim}

\noindent By our axiomatization of existentially closed models (Theorem \ref{theorem_big_characterization}), there is a tuple $\uv \in \VV$ such that $(\mm, \theta) \models \psi_\theta(\uv) \wedge S(\uv)$.
Fix such a tuple $\uv = \uv_\li \uv_\ld$ and recall that $\uv_\li = (v_{\li, k} : 1 \leq k \leq m)$ and $\uv_\ld = (v_{\ld, k, g} : 1 \leq k \leq n, g \in G_k)$ by the definition of $\ux$ in Definition \ref{def_iter_c_ss}.

\begin{subclaim}
    Given $\uv \in \VV$ with $(\mm, \theta) \models \psi_\theta(\uv) \wedge S(\uv)$, we have $(\mm, \zeta[\theta]) \models \psi'_\theta(\uv') \wedge S'(\uv')$ for the tuple $\uv'$ defined by
    $$\uv'_\li := \uv_\li \quad  \text{and} \quad v'_{\ld, k} := \sum\nolimits_{g \in G_k} v_{\ld, k, g}.$$
\begin{innerproof}
    That $(\mm, \zeta[\theta]) \models S'(\uv')$ holds was shown in Claim \ref{lemma_iter_c_ss_holds}.
    We know that $\psi_\theta(\uv)$ holds in $(\mm, \theta)$ and, by the definition of $\psi(\uxvec)$ and our \placeholderNotation{}, we see that $\psi_\theta(\uv)$ is $\psi'(\uxvec{}')$, but with
    \begin{enumerate}[(i)]
        \item each occurrence of $x'^i_{\li, k}$ replaced by $(X^i \circ \zeta)[\theta](v_{\li, k}) = \zeta[\theta]^i(v'_{\li, k})$ (see Lemma \ref{lemma_composition_rules} and recall $v'_{\li, k} = v_{\li, k}$).
        \item each occurrence of $x'^i_{\ld, k}$ replaced by $\sum_{g \in G_k} \big(\rf_{k, g, i}[\theta](v_{\ld, k, g}) + \chi_{k, g, i}[\theta](u_{k, g})\big)$.
        Since we have $(\mm, \theta) \models S(\uv)$, we also obtain $u_{k, g} = g^{q_{k, g}}[\theta](v_{\ld, k, g})$.
        Recall $\rf_{k, g, i} + \chi_{k, g, i} \cdot g^{q_{k, g}} = (X^i \circ \zeta)$, by the definition of $\rf_{k, g, i}$ and $\chi_{k, g, i}$, so we obtain
        \begin{align*}
            &\sum\nolimits_{g \in G_k} \big(\rf_{k, g, i}[\theta](v_{\ld, k, g}) + \chi_{k, g, i}[\theta](u_{k, g})\big)\\ =& \sum\nolimits_{g \in G_k} \big(\rf_{k, g, i}[\theta](v_{\ld, k, g}) + \chi_{k, g, i}[\theta](g^{q_{k, g}}[\theta](v_{\ld, k, g}))\big) \\
            =& \sum\nolimits_{g \in G_k} (\rf_{k, g, i} + \chi_{k, g, i} \cdot g^{q_{k, g}})[\theta](v_{\ld, k, g}) \\
            =& \sum\nolimits_{g \in G_k} (X^i \circ \zeta)[\theta](v_{\ld, k, g}) \\
            =&\; (X^i \circ \zeta)[\theta]\Big(\sum\nolimits_{g \in G_k} v_{\ld, k, g}\Big) \\
            =&\; \zeta[\theta]^i(v'_{\ld, k}),
        \end{align*}
        and hence each occurrence of $x'^i_{\ld, k}$ is actually replaced by $\zeta[\theta]^i(v'_{\ld, k})$.
    \end{enumerate}
    By our \placeholderNotation{} but with $\zeta[\theta]$ instead of $\theta$, we now see that
    $(\mm, \zeta[\theta]) \models \psi'_\theta(\uv')$, which completes the proof.
\end{innerproof}
\end{subclaim}

\noindent Recall that $S'(\ux')$ was an arbitrary $C_\zeta$-sequence-system over $(\VV, \zeta[\theta])$ and $\psi'(\uxvec')$ an arbitrary $L(M)$-formula that is bounded by $S'$ and implies no finite disjunction of non-trivial linear dependencies in $\uxvec'$ over $\VV$.
We have shown that $(\mm, \zeta[\theta]) \models \psi'_\theta(\uv') \wedge S'(\uv')$.
As noted earlier, $\zeta[\theta]$ is $C_\zeta$-image-complete and hence $(\mm, \zeta[\theta])$ is an existentially closed model of $T_\theta^{C_\zeta}$ by Theorem \ref{theorem_big_characterization}.
This finishes the proof of the direction from right to left of Theorem \ref{theorem_iterations}.
As we already proved the other direction, this finishes the entire proof of Theorem \ref{theorem_iterations}.
\end{proof}
\end{theorem}

\noindent Note that in the statement (and in the entire proof) of Theorem \ref{theorem_iterations} we do not assume that $T$ satisfies \Hfour{} or that the model companion of $T^C_\theta$ exists.

\subsection{Consequences}

We now present a few simpler criteria for when $(\mm, \zeta[\theta])$ is again an existentially closed model of $T_\theta{\!\!}^{C_\zeta}$.
Here $C_\zeta$ is the kernel configuration from Definition \ref{def_C_prime_new_ii}, which, by Theorem \ref{theorem_iterations}, is the only kernel configuration $C' \in \Cc$ for which $(\mm, \zeta[\theta])$ can be an existentially closed model of $T^{C'}_\theta$.

\begin{corollary} \label{corollary_iteration_crit}
    Let $(\mm, \theta) \models T_\theta^C$ be existentially closed.
    The following holds:
    \begin{enumerate}[(i)]
        \item The following conditions imply that $(\mm, \zeta[\theta]) \models T_\theta{}^{\!\!C_\zeta}$ is existentially closed for any polynomial $\zeta \in K[X] \setminus K$:
        \begin{enumerate}[(a)]
            \item $C(g) \in \set{0, 1}$ for all $g \in \Kp{}$;
            \item $C(g) \in \set{0, \infty}$ for all $g \in \Kp{}$;
            \item there is a unique $g \in \Kp{}$ with $C(g) = 2$, and $C(g') = 0$ for all $g' \in \Kp{}$ with $g' \neq g$.
        \end{enumerate}
        \item Let $c_0 \in \set{0, 1, \infty}$ be given.
        If $C$ is transcendental with $C(f) = c_0$ for all $f \in \Kp{}$, then $(\mm, \zeta[\theta]) \models T_\theta^C$ is existentially closed.
        This also implies $C_\zeta = C$ for any polynomial $\zeta \in K[X] \setminus K$.
        \item If $C$ is algebraic with $\mipo(C)$ irreducible, then $(\mm, \zeta[\theta]) \models T_\theta{}^{\!\!C_\zeta}$ is existentially closed for any $\zeta \in K[X] \setminus K$.
        Moreover, $\mipo(C_\zeta)$ is irreducible, $\deg(\mipo(C_\zeta)) \mid \deg(\mipo(C))$, and $\mipo(C_\zeta)$ is the unique $f \in \Kp{}$ with $\mipo(C) \mid (f \circ \zeta)$.
    \end{enumerate}
    \begin{proof}
    As in the proof of Theorem \ref{theorem_iterations}, we fix $\zeta \in K[X] \setminus K$ and write $\mf_g$ instead of $\mf_g(\zeta)$.
    For (i), we only need to verify that \iterCond{} from Theorem \ref{theorem_iterations} holds, i.e., that $\mf_g \cdot C_\zeta(f) = C(g)$ holds for all $f \in \Kp{}$ with $C_\zeta(f) > 1$ and all $g \in \Fac(f \circ \zeta) \cap \Kp{0<C}$:
        \begin{enumerate}[(a)]
            \item Notice that $\mf_g \cdot 1 \geq 1 \geq C(g)$ holds for all $f \in \Kp{}$ and $g \in \Fac(f \circ \zeta)$ (recall $\mf_g \geq 1$).
            By the definition of $C_\zeta$, this implies $C_\zeta(f) \leq 1$ for all $f \in \Kp{}$.
            This shows that \iterCond{} holds.
            \item We show that $\mf_g \cdot C_\zeta(f) = C(g)$ holds for every $f \in \Kp{}$ and $g \in \Fac(f \circ \zeta) \cap \Kp{0<C}$.
            By our assumption, we have $C(g) = \infty$, which implies $C_\zeta(f) = \infty$.
            Thus the equation $\mf_g \cdot C_\zeta(f) = \mf_g \cdot \infty = \infty = C(g)$ holds.
            \item Let $g_0$ be the unique element of $\Kp{}$ with $C(g_0) = 2$.
            There is a unique $f_0 \in \Kp{}$ with $g_0 \mid (f_0 \circ \zeta)$ by (iv) of Lemma \ref{lemma_composition_rules}.
            Since $\Fac(f_0 \circ \zeta) \setminus \set{g_0}$ is a subset of $\Kp{C=0}$, we have $C_\zeta(f_0) = 2$ if $\mf_{g_0} = 1$ and $C_\zeta(f_0) = 1$ if $\mf_{g_0} > 1$.
            In the case $\mf_{g_0} = 1$, we have $\mf_{g_0} \cdot C_\zeta(f_0) = C(g_0)$.
            For any $f' \in \Kp{} \setminus \set{f_0}$, we have $\Fac(f' \circ \zeta) \subseteq \Kp{C=0}$, so we obtain $C_\zeta(f') = 0$.
            Combining everything, we see that \iterCond{} holds.
        \end{enumerate}
    For (ii), notice that (i) yields that $(\mm, \zeta[\theta]) \models T_\theta{}^{\!\!C_\zeta}$ is existentially closed, so we only need to verify $C = C_\zeta$.
    For this, we work directly with the definition of $C_\zeta$ (see (ii) of Definition \ref{def_C_prime_new_ii}):
    \begin{enumerate}[(a)]
        \item If $c_0 = 0$, then $\mf_g \cdot 0 = c_0 = C(g)$ holds trivially for any irreducible polynomial $g \in \Kp{}$.
        This implies $C_\zeta(f) := \min\set{n \in \NN \cup \set{\infty} : \forall g \in \Fac(f \circ \zeta) : \mf_g \cdot n \geq C(g)} = 0 = c_0$ for all $f \in \Kp{}$.
        \item If $c_0 = 1$, then $\mf_g \cdot 1 \geq c_0 = 1 > \mf_g \cdot 0$ for any $g \in \Kp{}$.
        This similarly implies $C_\zeta(f) = 1 = c_0$ for all $f \in \Kp{}$.
        \item If $c_0 = \infty$, then $\mf_g \cdot \infty = c_0 > \mf_g \cdot n$ for any $g \in \Kp{}$ and $n \in \NN$.
        Again, this implies $C_\zeta(f) = \infty = c_0$ for all $f \in \Kp{}$.
    \end{enumerate}
    We now show (iii).
    For this, notice that for any $g \in \Kp{}$ we have $C(g) = 1$ if and only if $g = \mipo(C)$, and $C(g) = 0$ otherwise.
    By (i), this tells us that $(\mm, \zeta[\theta]) \models T_\theta{}^{\!\!C_\zeta}$ is existentially closed.
    We will now compute $C_\zeta$ for the moreover part.
    Notice that there is a unique $f \in \Kp{}$ with $\mipo(C) \mid (f \circ \zeta)$.
    For this $f$, we have $C_\zeta(f) = 1$, since
    $$\mf_{\mipo(C)} \cdot 1 \geq C(\mipo(C)) > \mf_{\mipo(C)} \cdot 0$$
    and $\mf_g \cdot 0 = C(g)$ for any $g \in \Fac(f \circ \zeta) \setminus \set{\mipo(C)}$.
    For any $f' \in \Kp{}$ with $f' \neq f$, we have $\Fac(f' \circ \zeta) \subseteq \Kp{C=0}$ and therefore $C_\zeta(f') = 0$.
    This tells us that $\mipo(C_\zeta) = f$, and by (v) of Lemma \ref{lemma_composition_rules}, we obtain $n \cdot \deg(f) = \deg(\mipo(C))$ for some $n \geq 1$.
    \end{proof}
\end{corollary}

\noindent Assume $K = \QQ$.
Given algebraic kernel configurations $C_1, \dots, C_n$ with each $\mipo(C_k)$ irreducible, (iii) of Corollary \ref{corollary_iteration_crit} allows us to find another algebraic kernel configuration $C$ with $\mipo(C)$ irreducible such that each existentially closed model of $T_\theta^{C}$ has an existentially closed model of each $T_\theta^{C_k}$ as a reduct.

\begin{remark}
    Let $f_1, \dots, f_n \in \QQp{}$ be given with $\deg(f_k) > 1$ for all $k \in \set{1, \dots, n}$.
    For each $k \in \set{1, \dots, n}$, define $C_k$ to be the unique algebraic kernel configuration that satisfies $\mipo(C_k) = f_k$.

    Then there is an algebraic kernel configuration $C$ given by $\mipo(C) = g$ for some irreducible polynomial $g \in \QQp{}$, and there are $\zeta_1, \dots, \zeta_n \in \QQ[X]$, such that for any existentially closed model $(\mm, \theta) \models T^C_\theta$, the structure $(\mm, \zeta_k[\theta])$ is an existentially closed model of $T^{C_k}_\theta$ for each $k \in \set{1, \dots, n}$.
\begin{proof}
    Choose $b_1, \dots, b_n$ such that each $b_k \in \QQ^{\operatorname{alg}}$ is a root of the respective $f_k$.
    By the primitive element theorem, there is $a \in \QQ^{\operatorname{alg}}$ such that $\QQ(b_1, \dots, b_n) = \QQ(a) \simeq \QQ[X]/(g)$, where $g \in \QQp{}$ is the minimal polynomial of $a$ over $\QQ$.
    This means that for each $k \in \set{1, \dots, n}$ there is $\zeta_k$ such that $b_k = \zeta_k(a)$.
    Now, as $0 = f_k(b_k) = f_k(\zeta_k(a))$, we see that $a$ is a root of $f_k \circ \zeta_k$; hence $g \mid (f_k \circ \zeta_k)$.
    Let $C$ be the unique algebraic kernel configuration with $\mipo(C) = g$, and let $(\mm, \theta) \models T^C_\theta$ be existentially closed.
    By (iii) of Corollary \ref{corollary_iteration_crit}, we obtain that $(\mm, \zeta_k[\theta]) \models T^{C_k}_\theta$ is existentially closed for every $k \in \set{1, \dots, n}$.
\end{proof}
\end{remark}

\noindent As noted in the introduction, d'Elb\'ee introduced the theory of an algebraically closed field with a generic multiplicative endomorphism, called $\operatorname{ACFH}$, in \cite{dEl25}.
As discussed in Remark 2.20 in \cite{Chi25}, the author believes that most results also apply to algebraically closed fields with a multiplicative endomorphism, even though multiplication is only a ($\QQ$-)vector space after dividing by torsion (which is only a $\bigvee$-definable group).
Note that the theory $\operatorname{ACFH}$ would correspond to the kernel configuration $C_\infty$.
Hence (ii) of Corollary \ref{corollary_iteration_crit} with $c_0 = \infty$ likely also applies to $\operatorname{ACFH}$.
If so, then we would have $(F, \zeta[\theta]) \models \operatorname{ACFH}$ for any $(F, \theta) \models \operatorname{ACFH}$ and $\zeta \in \ZZ[X] \setminus \ZZ$, answering Question 5.16 in \cite{dEl25} positively.

We will now show that there are no better criteria for $(\mm, \zeta[\theta]) \models T_\theta{}^{\!\!C_\zeta}$ being existentially closed than those in (i) of Corollary \ref{corollary_iteration_crit}, if we do not impose any additional conditions on $\zeta \in K[X]\setminus K$.
For this, we no longer fix $\zeta$:

\begin{corollary}
    If there is either
    \begin{enumerate}[(i)]
        \item some $g \in \Kp{}$ with $2 < C(g)<\infty$;
        \item some distinct $f, g \in \Kp{}$ with $1 < C(f) = C(g) < \infty$;
        \item some $f, g \in \Kp{}$ with $0 < C(f)<C(g)$;
    \end{enumerate}
    then we can choose $\zeta$ such that $(\mm, \zeta[\theta]) \models T_\theta{}^{\!\!C_\zeta}$ is not existentially closed for any existentially closed $(\mm, \theta) \models T_\theta^C$.
\begin{proof}
    We deal with each case individually:
    \begin{enumerate}[(i)]
        \item We set $\zeta = g^{C(g)-1}$.
        Clearly, we have $X \circ \zeta = g^{C(g)-1}$.
        This means $\mf_g = C(g)-1$, and because $C(g) > 2$, we obtain $(C(g)-1) \cdot 2 > C(g) > (C(g)-1) \cdot 1$.
        Thus $C_\zeta(X) = 2$ (notice that there is no other $g' \in \Kp{}$ with $g' \mid X \circ \zeta$) and $\mf_g \cdot C_\zeta(X) \neq C(g)$.
        This means that the irreducible polynomial $f := X$ witnesses that condition \iterCond{} of Theorem \ref{theorem_iterations} fails.
        \item We set $\zeta = f \cdot g^2$, so we have $X \circ \zeta = f \cdot g^2$, i.e., $\mf_f = 1$ and $\mf_g = 2$.
        One can check that we must have $C_\zeta(X) = C(f) > 1$, and therefore $\mf_g \cdot C_\zeta(X) = 2 \cdot C(f) = 2 \cdot C(g) \neq C(g)$.
        This again means that the irreducible polynomial $X$ witnesses that \iterCond{} fails.
        \item This time, we set $\zeta = f \cdot g$.
        Again, this implies $X \circ \zeta = f \cdot g$.
        We can now verify $C_\zeta(X) = C(g) \geq 2$; however, $\mf_f \cdot C_\zeta(X) = 1 \cdot C_\zeta(X) = C(g) > C(f)$.
        We conclude that \iterCond{} fails. \qedhere
    \end{enumerate}
\end{proof}
\end{corollary}

\noindent For the remainder of this section, we assume \Hfour{} for simplicity, so we can write ``$(\mm, \theta) \models T\theta^C$'' instead of ``let $(\mm, \theta) \models T^C_\theta$ be existentially closed''.

\begin{question}\label{quest_fail_iter_cond}
    Given $(\mm, \theta) \models T\theta^C$, what theory axiomatizes $\Th((\mm, \zeta[\theta]))$ if condition \iterCond{} from Theorem \ref{theorem_iterations} fails?
\end{question}

\noindent We take a quick look at one example with $(\mm, \zeta[\theta]) \not\models T\theta^{C_\zeta}$, which roughly represents how the author expects $(\mm, \zeta[\theta])$ to look: ``almost'' a model of $T\theta^{C_\zeta}$ with some kernels $\Ker(\rho[\theta])$ definable for some $\rho \in K[X]$ not of the form $\rho' \circ \zeta$.

\begin{example}
    Let $C$ be algebraic with $\mipo(C) = X^3$, and let $(\VV, \theta) \models \TKvs\theta^C$ be given.
    For $\zeta := X^2$, one can show $C_\zeta(X) = 2$ and that
    $$
    \Ker(X[\theta]) := \set{x \in \VV : x \in \Ker(X[\zeta[\theta]]) \wedge \exists y \in \Ker(X^2[\zeta[\theta]]) : X[\zeta[\theta]](y) = x}
    $$
    is definable in $(\VV, \zeta[\theta])$, although $X$ is not of the form $\rho \circ \zeta$.
    One can show that condition (ii) of $C_\zeta$-completeness (see Definition \ref{def_c_complete_new}) fails for $u \in \Ker(X^2[\theta]) \setminus \Ker(X[\theta]) \subseteq \Ker(X[\zeta[\theta]])$, as we have $\Ker(X^4[\theta]) = \Ker(X^3[\theta])$.
\end{example}

\noindent As proven in the previous section, these kernels are generic, so the theory of $(\mm, \zeta[\theta])$ might roughly look like a reasonable combination of parts of $T\theta^{C_\zeta}$ and, in general, multiple copies of $TV$ (see Corollary \ref{corollary_t_v_companion}).

One last question one might ask is the following: Given $(\mm, \theta) \models T\theta^C$ and $\zeta \in K[X] \setminus K$, is there some endomorphism $\theta'$ with $\zeta[\theta'] = \theta$?
We give a positive answer in one simple case, although the author does not expect this to be true in general.

\begin{remark}[$\zeta$-Roots of $\theta$]
    Let $C = C_0$, i.e., let $C$ be the unique transcendental kernel configuration with $C(f) = 0$ for all $f \in \Kp{}$.
    Also let $(\VV, \theta) \models \TKvs\theta^C$ be a model of cardinality at least $\max\set{\aleph_0, |K|}{}^+$.
    By (ii) of Corollary \ref{corollary_iteration_crit}, we know that $(\VV, \zeta[\theta])$ is also a model of $\TKvs\theta^C$.
    Since $\TKvs\theta^C$ is just the theory of $K(X)$-vector spaces (see Example \ref{example_kvs}), our cardinality assumption on $\VV$ implies that there is an $\LKThe$-isomorphism
    $$
    \sigma \colon (\VV, \theta) \to (\VV, \zeta[\theta]).
    $$
    Define $\theta' := \sigma^{-1} \circ \theta \circ \sigma$.
    Using
    $$\zeta[\theta'](v) = \sigma^{-1} \circ \zeta[\theta] \circ \sigma(v) = \sigma^{-1}(\zeta[\theta](\sigma(v))) = \sigma^{-1}(\sigma(\theta(v))) = \theta(v),$$
    we obtain
    $
    (\VV, \zeta[\theta']) = (\VV, \theta).
    $
    Hence, we have found another endomorphism $\theta'$ of the vector space $\VV$ with $(\VV, \theta') \models \TKvs\theta^C$ and $\zeta[\theta'] = \theta$.
    We call such a $\theta'$ a \textbf{$\bm{\zeta}$-root} of $\theta$.
\end{remark}

\begin{question}\label{quest_zeta_roots}
    For which fields $K$, ($K$-)kernel configurations $C$, theories $T$, models of $T\theta^C$, and polynomials $\zeta \in K[X] \setminus K$ can we find a $\zeta$-root of $\theta$?
\end{question}

\noindent Recall that $R_C$ is a ring of endomorphisms definable in $T\theta^C$.
Another question is what happens if we replace $\theta$ not by $\zeta[\theta]$, but by some $r \in R_C$:

\begin{question}\label{quest_what_happens_with_r_instead_theta}
    What theory axiomatizes $\Th((\mm, r))$ given $(\mm, \theta) \models T\theta^C$ and $r \in R_C$?
\end{question}

\bibliographystyle{alpha}
\bibliography{sample}

\Addresses

\end{document}